\newtheorem{theorem}{Theorem}
\newtheorem{proposition}[theorem]{Proposition}
\newtheorem{corollary}[theorem]{Corollary}
\newtheorem{lemma}[theorem]{Lemma}
\newtheorem{remark}[theorem]{Remark}
\newcommand{\cref}[1]{Corollary~\textup{\ref{#1}}}
\newcommand{\defeq}{\mathrel{\mathop:}=}
\newcommand{\R}{\mathbb{R}}
\newcommand{\C}{\mathbb{C}}
\newcommand{\N}{\mathbb{N}}
\newcommand{\E}{\mathbb{E}}
\newcommand{\V}{\mathrm{Var}}
\newcommand{\Prob}{\mathbb{P}}
\newcommand{\T}{\mathbb{T}^{n} /S_n}
\newcommand{\Real}{\mathfrak{Re}}
\newcommand{\Res}{\mathrm{Res}}
\newcommand{\Prime}{\mathcal{P}}
\newcommand{\abc}{\mathbb{T}}
\title{Gaussian Approximation of the Distribution of Strongly Repelling Particles on the Unit Circle}
\author{Alexander Soshnikov\footnote{Department of Mathematics, University of California, Davis; soshniko@math.ucdavis.edu} , Yuanyuan Xu\footnote{Department of Mathematics, University of California, Davis; yyxu@math.ucdavis.edu}}
\date{}
\begin{document}
 \maketitle

\begin{abstract}
In this paper, we consider a strongly-repelling model of $n$ ordered particles $\{e^{i \theta_j}\}_{j=0}^{n-1}$ with the density $p({\theta_0},\cdots, \theta_{n-1})=\frac{1}{Z_n} \exp \left\{-\frac{\beta}{2}\sum_{j \neq k} \sin^{-2} \left( \frac{\theta_j-\theta_k}{2}\right)\right\}$, $\beta>0$. Let $\theta_j=\frac{2 \pi j}{n}+\frac{x_j}{n^2}+const$ such that $\sum_{j=0}^{n-1}x_j=0$. Define $\zeta_n \left(  \frac{2 \pi j}{n}\right) =\frac{x_j}{\sqrt{n}}$ and extend $\zeta_n$ piecewise linearly to $[0, 2 \pi]$. We prove the functional convergence of $\zeta_n(t)$ to $\zeta(t)=\sqrt{\frac{2}{\beta}} \Real \left(  \sum_{k=1}^{\infty} \frac{1}{k} e^{ikt} Z_k  \right)$, where $Z_k$ are i.i.d. complex standard Gaussian random variables.

\end{abstract}

\section{Introduction}
The study of random matrix theory (RMT) can be traced back to sample covariance matrices studied by J. Wishart in data analysis in 1920s-1930s.  In 1951, E. Wigner associated the energy levels of heavy-nuclei atoms with Hermitian matrices whose components are i.i.d. random variables.  In 1960s, F. Dyson and M. Mehta introduced three archetypal types of matrix ensembles: Gaussian Orthogonal Ensemble (GOE), Gaussian Unitary Ensemble (GUE), and Gaussian Symplectic Ensemble (GSE), see \cite{matrixbook2}. In particular, the GOE/GUE/GSE is defined to be the ensemble of $n \times n$ Real Symmetric/Hermitian/Hermitian Quaternionic  matrices equipped with a probability measure given by
$$P^{g}_{\beta}(H)=const_n \cdot e^{-\frac{1}{2}Tr(H^2)} dH,$$
where $dH$ is the Lebesgue measure on the appropriate space of matrices.

 The joint probability density for the eigenvalues $\lambda_1, \cdots, \lambda_n \in \R$ of the GOE/GUE/GSE is given by
\begin{equation}\label{beta}
p^g_{\beta}(\lambda_1, \cdots, \lambda_n)=\frac{1}{Z^g_{n,\beta}} \prod_{j < k} |\lambda_j-\lambda_k|^{\beta}  e^{-\sum_{i=1}^n \frac{1}{2} \lambda_i^2} ,
\end{equation}
and
\begin{equation}
Z^g_{n,\beta}=(2 \pi)^{\frac{n}{2}} \prod_{j=1}^n \frac{\Gamma\left( \frac{\beta}{2}j+1 \right)}{\Gamma\left( \frac{\beta}{2}+1 \right)},
\end{equation}
where $\beta=1$ for the GOE, $\beta=2$ for the GUE, and $\beta=4$ for the GSE. The ensemble (\ref{beta}) for general $\beta>0$ is called the Gaussian $\beta$-ensemble (see \cite{D}). 

Equally important circular ensembles, namely Circular Orthogonal Ensemble (COE), Circular Unitary Ensemble (CUE), and Circular Symplectic Ensemble (CSE) were introduced in \cite{dyson1}. In particular, the CUE is defined to be the ensemble of $n \times n$ unitary matrices equipped with the Haar measure. The ordered eigenvalues are denoted as $\{e^{i \theta_j}\}_{j=0}^{n-1}$, where $0 \leq \theta_0 \leq \cdots \leq \theta_{n-1} \leq 2 \pi$. The joint probability density of the angles $\{\theta_j\}_{j=0}^{n-1}$ is given by
\begin{equation}\label{cue2}
p^c_{\beta}(\theta_1, \cdots, \theta_n)=\frac{1}{Z^c_{n ,\beta}} \prod_{0 \leq j < k \leq n-1}  \left| e^{i \theta_j} -e^{i \theta_k} \right|^{\beta}
\end{equation}
\begin{equation}\label{cue}
=\frac{1}{Z^c_{n, \beta}} \exp \left\{ \frac{\beta}{2} \sum_{ j \neq k} \log \left| 2 \sin \frac{\theta_j-\theta_k}{2} \right| \right\},
\end{equation}
with $\beta=2$ and
\begin{equation}\label{normalization}
Z^c_{n, \beta}=\frac{(2 \pi)^n}{n!} \frac{\Gamma\left(\frac{\beta n}{2}+1 \right)}{\Gamma \left( \frac{\beta}{2} +1\right)}.
\end{equation}
Similarly, the joint probability density for the COE/CSE is given by (\ref{cue2})-(\ref{normalization}) with $\beta=1$ for the COE and $\beta=4$ for the CSE. The generalized ensemble for $\beta>0$ is named the Circular $\beta$-ensemble (see \cite{F}).
These random matrix ensembles were originally introduced in Physics, but recently have played an important role in linking RMT with Number Theory, because of the connections with the Riemann zeta function.

 The Riemann zeta function is defined to be
$$\zeta(s)=\sum_{n=1}^{\infty} \frac{1}{n^s}=\prod_{p \in \Prime} \left(1-\frac{1}{p^s} \right)^{-1}$$
for $\Real s>1$ and can be analytically extended to the whole complex plane. In 1859, B. Riemann conjectured that besides the negative even integers, all other (non-trivial) zeros of $\zeta(z)$ have the form of $z=\frac{1}{2}+i \gamma_j,$ where $\gamma_j \in \R.$
Assuming the Riemann's hypothesis, in 1970s, H. Montgomery proved in \cite{mont} that under some technical conditions, the two-point correlation function of  $\gamma_j$'s  on the scale of their mean spacing is given by
\begin{equation} \label{2point}
1-\frac{\sin^2 (\pi x)}{\pi^2 x^2}.
\end{equation} 
Later, A. Odlyzko provided numerical support for Montgomery's results in \cite{num}, and Z. Rudnick and P. Sarnak extended Montgomery's results to higher order correlations in \cite{Lfunction}. F. Dyson pointed out that (\ref{2point}) coincides with the two point correlation function of the scaled eigenvalues of GUE, which is the same as that of the scaled eigenphases of CUE as $n \rightarrow \infty$. In general, the limiting distribution of the non-trivial zeros of L-functions is believed to coincide with the limiting local eigenvalue statistics in CUE/GUE model. Since then, many efforts were made to find the deep connection between zeta function and random matrices, see e.g. \cite{berry}, \cite{keating2} and \cite{keating3}.  In 2000, J. Keating and N. Snaith made an important contribution in connecting the characteristic polynomial of CUE and value distribution of $\zeta(z)$ on the critical line, see \cite{keating}. They showed that the distribution of values taken by $\log \det \left( e^{ i s}-U_n  \right)$ averaged over $U_n \in$ CUE is a good approximation to the value distribution of $\log \zeta\left(\frac{1}{2}+it\right)$  for large $n, t$ given the relation that $n=\log \frac{t}{2 \pi}(1+o(1))$.

  In 1988, K. Johansson \cite{kurt} proved the Central Limit Theorem(CLT) for the linear statistics in the Circular $\beta$-ensemble.

\begin{theorem}\label{kurt}
Let $f \in C^{1+\epsilon}(S^1)$, $\epsilon>0$. Then $\sum_{j=0}^{n-1} f(\theta_j)-\frac{n}{2 \pi} \int_{0}^{2 \pi} f(x) dx$ converges in law to the Gaussian distribution $N(0, \frac{2}{\beta}\sum_{k=-\infty}^{\infty} |k| |c_{k}|^2)$,
where $c_{k}=\frac{1}{2 \pi} \int_{0}^{2 \pi} f(x) e^{-i kx} dx$.
\end{theorem}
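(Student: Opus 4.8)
The plan is to compute the limiting characteristic function of the linear statistic $S_n(f):=\sum_{j=0}^{n-1}f(\theta_j)-\tfrac{n}{2\pi}\int_0^{2\pi}f$ (with $f$ real-valued) by reducing to trigonometric polynomials and then showing that the power sums $p_k^{(n)}:=\sum_{j=0}^{n-1}e^{ik\theta_j}$ become, jointly, independent complex Gaussians. The reason to expect this: using $\log\big|2\sin\tfrac{\theta}{2}\big|=-\sum_{m\ge1}\tfrac{\cos m\theta}{m}$, the joint density \eqref{cue}, call it $\varrho_n$, is proportional to $\exp\!\big(-\tfrac{\beta}{2}\sum_{m\ge1}\tfrac1m\big(|p_m^{(n)}|^2-n\big)\big)$, which is \emph{formally} Gaussian in the Fourier modes with $\E|p_m^{(n)}|^2\approx\tfrac{2m}{\beta}$; completing the square in mode $m$ against a source $c_mp_m^{(n)}+\overline{c_m}\,\overline{p_m^{(n)}}$ produces the factor $e^{\frac{2m}{\beta}|c_m|^2}$, whose product over $m$ equals $e^{\sigma^2/2}$ with $\sigma^2:=\tfrac2\beta\sum_{k\in\Z}|k||c_k|^2$ the claimed variance. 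To make this rigorous, expand $f=\sum_k c_k e^{ik\theta}$, so $S_n(f)=\sum_{k\neq0}c_kp_k^{(n)}$; since $f\in C^{1+\epsilon}$ gives $|c_k|=O(|k|^{-1-\epsilon})$ we have $\sigma^2<\infty$. As $\varrho_n$ depends only on the differences $\theta_j-\theta_\ell$ it is rotation invariant, so $\E p_k^{(n)}=0$ and $\E\big[p_k^{(n)}\overline{p_{k'}^{(n)}}\big]=0$ for $k\neq k'$, whence $S_n^{>M}:=\sum_{|k|>M}c_kp_k^{(n)}$ satisfies $\E|S_n^{>M}|^2=\sum_{|k|>M}|c_k|^2\,\E|p_k^{(n)}|^2$. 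Granting the a priori bound $\E|p_k^{(n)}|^2\le C(\beta)\min(|k|,n)\le C(\beta)|k|$ uniformly in $n$, this is $\le C(\beta)\sum_{|k|>M}|k|^{-1-2\epsilon}\to0$ as $M\to\infty$, uniformly in $n$; by Chebyshev the tail $S_n^{>M}$ is uniformly negligible, so it suffices to prove that, for each fixed $M$, $S_n^{\le M}:=\sum_{0<|k|\le M}c_kp_k^{(n)}\Rightarrow N(0,\sigma_M^2)$ with $\sigma_M^2:=\tfrac2\beta\sum_{0<|k|\le M}|k||c_k|^2\to\sigma^2$.

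\textbf{The a priori bound.} For $\beta=2$ this is the exact identity $\E_{\mathrm{CUE}}|p_k^{(n)}|^2=\min(k,n)$. For general $\beta$ I would use exchangeability to write $\E|p_k^{(n)}|^2=n+n(n-1)\,\E\big[e^{ik(\theta_1-\theta_2)}\big]$, reducing the bound to known estimates on the two-point correlation function of the circular $\beta$-ensemble; alternatively one derives it from the Dyson--Schwinger (loop) equations for $\varrho_n$.

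\textbf{CLT for trigonometric polynomials.} For real $f$, $S_n^{\le M}=2\,\Real\!\big(\sum_{k=1}^Mc_kp_k^{(n)}\big)$, so it suffices to prove the joint convergence $\big(p_k^{(n)}\big)_{k=1}^{M}\Rightarrow\big(\sqrt{2k/\beta}\,\mathcal N_k\big)_{k=1}^{M}$, where the $\mathcal N_k$ are i.i.d.\ standard complex Gaussian ($\E\mathcal N_k=0$, $\E|\mathcal N_k|^2=1$, $\E\mathcal N_k^2=0$); this is the $\beta$-analogue of the Diaconis--Shahshahani theorem, and the limit is then $2\,\Real\!\big(\sum_{k=1}^Mc_k\sqrt{2k/\beta}\,\mathcal N_k\big)\sim N\!\big(0,\tfrac4\beta\sum_{k=1}^Mk|c_k|^2\big)=N(0,\sigma_M^2)$. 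I would establish the joint convergence by the method of moments: rotation invariance forces $\E\big[\prod_k(p_k^{(n)})^{a_k}\,\overline{(p_k^{(n)})}^{b_k}\big]=0$ unless $\sum_k k(a_k-b_k)=0$, and one shows that these mixed moments converge, as $n\to\infty$, to $\prod_k\delta_{a_kb_k}\,a_k!\,(2k/\beta)^{a_k}$ --- precisely the mixed moments of the Gaussian limit, which by Carleman's criterion determine its law. I would identify this limit via the Jack-polynomial expansion of $\prod_{j<\ell}|e^{i\theta_j}-e^{i\theta_\ell}|^\beta$ (Jack parameter $\alpha=2/\beta$) together with the pairing $\langle p_\lambda,p_\mu\rangle_\alpha=\delta_{\lambda\mu}\,z_\lambda\,\alpha^{\ell(\lambda)}$: the circular $\beta$-ensemble expectation of a product of power sums converges, as $n\to\infty$, to this Jack inner product. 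This is transparent when $\beta$ is an even integer (there $|\Delta|^\beta$ is literally a squared polynomial and one integrates character by character) and extends to all $\beta>0$ by analytic continuation in $\beta$ or by the loop hierarchy.

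\textbf{Where the difficulty lies.} The approximation step is routine; the real work is in two uniform-in-$n$ inputs. First, the a priori bound $\E|p_k^{(n)}|^2\le C(\beta)\min(k,n)$ must hold for \emph{all} frequencies $k$, in particular $k$ of order $n$, where the ``independent Gaussian of variance $2k/\beta$'' picture degenerates because the power sums feel the constraint that there are only $n$ particles (already at $n=2$ and $\beta>2$ one has $\E|p_n^{(n)}|^2>n$, so the $\beta=2$ formula does not persist). Second --- and this is the crux --- one must control \emph{all} the mixed power-sum moments for general $\beta>0$: unlike $\beta=2$ there is no character-theoretic shortcut, so one either pushes the Jack-polynomial combinatorics (a priori transparent only for even integer $\beta$) or runs the full Dyson--Schwinger hierarchy and proves that every cumulant of $S_n^{\le M}$ of order $\ge3$ vanishes in the limit, with all error terms controlled uniformly in $n$. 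Put differently, the cost of the fact that the $p_m^{(n)}$ are \emph{not} free variables --- a configuration of $n$ points has only $n$ degrees of freedom, and the power sums obey polynomial (Newton) relations --- is that the heuristic ``completing the square'' must be replaced by an honest moment (or loop-equation) computation.
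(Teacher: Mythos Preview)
The paper does not prove this theorem. It is stated in the introduction as a known result of Johansson (reference \cite{kurt} in the paper), used only as background and motivation for the paper's own model, which concerns a \emph{different} ensemble (the $\sin^{-2}$ repulsion, not the circular $\beta$-ensemble). So there is no ``paper's own proof'' to compare against.

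That said, a few remarks on your sketch versus what Johansson actually did. Johansson's 1988 argument does not go through power-sum moments or Jack polynomials at all: he writes the characteristic function $\E e^{itS_n(f)}$ as a ratio of partition functions with a perturbed weight and analyzes it via Szeg\H{o}-type asymptotics for (generalized) Toeplitz integrals, extending the strong Szeg\H{o} theorem to all $\beta>0$. Your approach --- truncate to trigonometric polynomials, control the tail by a uniform second-moment bound, then prove the joint CLT for the first $M$ power sums --- is a legitimate alternative route (it is essentially the Diaconis--Shahshahani/Diaconis--Evans strategy, extended from $\beta=2$ to general $\beta$), but as you yourself flag, it is not a proof as written: both the uniform bound $\E|p_k^{(n)}|^2\le C(\beta)\min(k,n)$ for general $\beta$ and the identification of all mixed power-sum moments are deferred to ``known estimates,'' ``loop equations,'' or ``analytic continuation from even $\beta$,'' none of which you carry out. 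In particular, the Jack-polynomial route you sketch is clean only for even integer $\beta$, and the analytic-continuation step you invoke to reach arbitrary $\beta>0$ is exactly the kind of argument that requires its own careful justification (the moments are polynomials in $\beta$ for fixed $n$, but you need control that is uniform in $n$). So the proposal is a reasonable outline of a modern proof, genuinely different from Johansson's, but with the two hard inputs left as black boxes.
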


\begin{remark}
For $\beta=2$, the result holds under the optimal condition $\sum_{k=-\infty}^{\infty} |k| |c_{k}|^2 < \infty$ (see also \cite{diaco}, \cite{soshcue2} and references therein).
\end{remark}

To study the characteristic polynomial of CUE, one can write $ \log |\det \left( e^{ i s}-U_n \right)|=\sum_{j=1}^n \log |e^{is}-e^{i \theta_j}|$. Due to the singularity of the logarithm function, we cannot apply Theorem \ref{kurt}.  T. Baker and P. Forrester proved in \cite{F1} that $\frac{\sqrt{2} \log |\det \left( e^{ i s}-U_n  \right)|}{\sqrt{ \log n}}$ converges in distribution to standard normal distribution for fixed $s$.  It was proved in \cite{cue1} that for the CUE $(\beta=2)$, $\sqrt{2} \log |\det \left( e^{ i s}-U_n  \right)|$ converges in distribution to a generalized random function
\begin{equation}\label{T}
T(s)=\Real \left( \sum_{k=1}^{\infty} \frac{e^{i k s}}{\sqrt{k}}   Z_k \right),
\end{equation}
where $Z_k$ are i.i.d complex standard  Gaussian variables (see also \cite{F1} and \cite{DE}).

 The generalized random function $T(s)$ makes another appearance in the Circular $\beta-$ensemble as follows. One can show that the joint probability density of (\ref{cue}) obtains its maximum at the lattice configuration $\theta_j=\frac{2 \pi j}{n}+const~(0 \leq j \leq n-1).$
 Write
$$\theta_j=\frac{2 \pi j}{n}+\frac{t_j}{n}+const.$$ 
Let us choose the constant such that $\sum_{j=0}^{n-1} t_j=0$ and take Taylor expansion of (\ref{cue}) around this critical configuration. If we ignore the cubic and higher terms, then we get, as an approximation, a multivariate Gaussian distribution on the hyperplane $\sum_{j=0}^{n-1} t_j=0$ with the density
\begin{equation}\label{newcue}
\tilde{p}_{g}(t)=\frac{1}{\tilde{Z}_g} \exp \left\{ -\frac{\beta}{16} \sum_{j \neq k}  \frac{1}{\sin^2 \left( \frac{\pi(j-k)}{n}  \right)} \frac{(t_j-t_k)^2}{n^2} \right\}.
\end{equation}
It can be shown that $t_j$ from (\ref{newcue}) can be expressed as
\begin{equation}
t_j=\frac{2}{\sqrt{\beta}} \Real \left( \sum_{k=1}^{n} \frac{e^{ \frac{2 \pi i j k}{n}}}{\sqrt{k}}   Z_k \right)(1+\epsilon_n),
\end{equation}
where $\epsilon_n$ is a negligible random error term with $\V(\epsilon_n)=o_n(1)$. Moreover, the linear statistics $\sum_{j=0}^{n-1} f\left( \frac{2 \pi j}{n}+\frac{t_j}{n} \right)$ satisfies the same CLT as in Theorem \ref{kurt}.

\begin{remark}
We refer the reader to Section 3.4 of \cite{matrixbook} for a related discussion of the mesoscopic structure of GUE eigenvalues.
\end{remark}

 This indicates that the generalized random function $T(s)$ defined in (\ref{T}) gives a good approximation of the eigenvalue statistics of CUE. However, it is not entirely clear in what sense we can ignore cubic and higher order terms of the Taylor expansion of (\ref{cue}). This motivated us to consider a new model of interacting particles on the unit circle with stronger repulsion than that in the Circular $\beta-$ensembles. The purpose of this paper is to establish the Gaussian approximation for the distribution of strongly repelling particles on the unit circle.

 Throughout this paper, the letters $C_k$, $C'_k$, $c_k$ and $c'_k$$(k \in \N)$ denote positive constants whose values might change in different parts of the paper, but are always independent of $n$. We say $a_n \ll b_n$ or $a_n=o(b_n)$ if $\frac{a_n}{b_n} \rightarrow 0$ as $n \rightarrow \infty$ and $a_n=O(b_n)$ if there exists some positive constant $C$ such that $|a_n| \leq C |b_n|$ as $n \rightarrow \infty$. If $a_n \rightarrow 0$ as $n \rightarrow \infty$ and the decay rate does not depend on other parameters, we say $a_n=o_n(1)$.  Also, we denote $a_n \sim b_n$ if there exist positive constants c and C such that $c b_n \leq X \leq C a_n$ as $n \rightarrow \infty$.

\section{Set up and notations}
 Consider a strong repulsion model of particles distributed on $$\T=\left\{ \theta=(\theta_0, \cdots, \theta_{n-1}) \in [0, 2 \pi]^{n} ~| ~\theta_0 \leq \theta_1 \leq \cdots \leq \theta_{n-1} \right\}.$$ 
The joint probability density is defined as
\begin{equation} \label{ptheta}
q({\theta})=\frac{1}{Z_n} e^{H_{n,\beta}({\theta})},
\end{equation}
where 
\begin{equation} \label{}
H_{n,\beta}({\theta})=-\frac{\beta}{2} \sum_{i \neq j} \frac{1}{\sin^2 \left( \frac{\theta_i-\theta_j}{2}\right)},
\end{equation}
and 
\begin{equation} \label{}
Z_n=\int_{\T} e^{H_{n, \beta}({\theta})} d \theta .
\end{equation}

 For any measurable subset $A \in \T$, let $\Prob(A)=\int_A q(\theta) d \theta$.

 Note that the repulsion in $H_{n,\beta}(\theta)$  is stronger than the logarithmic one in (\ref{cue}).

 Let 
\begin{equation}\label{theta}
\theta_i=\frac{2 \pi i}{n}+\psi+\frac{x_i}{n^2},
\end{equation} where $\psi$ is a constant chosen so that 
\begin{equation}
\sum_{i=0}^{n-1} x_i=0.
\end{equation}

Thus, 
\begin{equation}\label{psi}
\psi=\frac{1}{n} \sum_{i=0}^{n-1} \theta_i-\frac{\pi(n-1)}{n}.
\end{equation}

For notational simplicity, define $\alpha_i=\frac{2 \pi i}{n}+\psi, \alpha=(\alpha_0, \cdots, \alpha_{n-1})$, $x=(x_0, \cdots, x_{n-1})$, then $\theta=\alpha+\frac{x}{n^2}$.

 Next, we introduce some useful lemmas.

\begin{lemma}\label{integral}
The probability density $q(\theta)$ in (\ref{ptheta}) obtains its maximum at $\theta=\alpha$, and 
\begin{equation} \label{}
H_{n,\beta}(\alpha)-H_{n,\beta}(\theta)=\frac{\beta}{2}\sum_{i \neq j}\frac{(x_i-x_j)^2}{n^4} \int_0^1 \frac{\frac{1}{2}+\cos^2 \left( \frac{\pi(i-j)}{n} +\tau \frac{x_i-x_j}{2n^2} \right)}{ \sin^4 \left( \frac{\pi(i-j)}{n} +\tau \frac{x_i-x_j}{2n^2} \right)} \cdot (1- \tau) d \tau.
\end{equation}
\end{lemma}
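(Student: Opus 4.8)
The plan is to reduce the whole statement to a one-variable Taylor expansion of $H_{n,\beta}$ along the straight segment from the equispaced configuration $\alpha$ to $\theta$. For $\tau\in[0,1]$ set $\theta(\tau)=\alpha+\tau\,x/n^{2}$ and $\phi(\tau)=H_{n,\beta}(\theta(\tau))$, so that $\phi(0)=H_{n,\beta}(\alpha)$ and $\phi(1)=H_{n,\beta}(\theta)$; writing $g(v)=\sin^{-2}v$ and $u_{ij}(\tau)=\tfrac{\pi(i-j)}{n}+\tau\,\tfrac{x_i-x_j}{2n^{2}}=\tfrac12\bigl(\theta_i(\tau)-\theta_j(\tau)\bigr)$, we have $\phi(\tau)=-\tfrac{\beta}{2}\sum_{i\ne j}g(u_{ij}(\tau))$. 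The first step is to check that $\phi\in C^{\infty}[0,1]$. If some two of the points $e^{i\theta_0},\dots,e^{i\theta_{n-1}}$ coincide, then $H_{n,\beta}(\theta)=-\infty$ and the asserted inequality is trivial, so assume they are distinct; then for $i>j$ one has $0\le\theta_i-\theta_j\le\theta_{n-1}-\theta_0<2\pi$, and since $\theta_i(\tau)-\theta_j(\tau)=(1-\tau)\tfrac{2\pi(i-j)}{n}+\tau(\theta_i-\theta_j)$ is a convex combination of $\tfrac{2\pi(i-j)}{n}\in(0,2\pi)$ and $\theta_i-\theta_j\in[0,2\pi)$ that is moreover strictly positive for every $\tau\in[0,1]$, we obtain $u_{ij}(\tau)\in(0,\pi)$, hence $\sin u_{ij}(\tau)>0$, so $\phi$ is smooth on $[0,1]$.

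The second step is Taylor's formula with integral remainder, $\phi(1)=\phi(0)+\phi'(0)+\int_0^1(1-\tau)\phi''(\tau)\,d\tau$, together with the vanishing of the linear term. Since $u_{ij}'(\tau)\equiv\tfrac{x_i-x_j}{2n^{2}}$, one has $\phi'(0)=-\tfrac{\beta}{4n^{2}}\sum_{i\ne j}g'(\tfrac{\pi(i-j)}{n})(x_i-x_j)$; separating the two terms and relabelling reduces this to a multiple of $\sum_i x_i\sum_{j\ne i}g'(\tfrac{\pi(i-j)}{n})$, and the inner sum equals $\sum_{m=1}^{n-1}g'(\tfrac{\pi m}{n})$ because $g'$ is $\pi$-periodic. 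Pairing $m$ with $n-m$ and using that $g'$ is odd (so $g'(\pi-v)=-g'(v)$, and the possible middle term satisfies $g'(\pi/2)=0$ when $n$ is even) gives $\sum_{m=1}^{n-1}g'(\tfrac{\pi m}{n})=0$, hence $\phi'(0)=0$; equivalently, $\alpha$ is a critical point of $H_{n,\beta}$, which reflects the cyclic symmetry of the lattice. Thus $H_{n,\beta}(\alpha)-H_{n,\beta}(\theta)=-\int_0^1(1-\tau)\phi''(\tau)\,d\tau$.

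The third step is the evaluation of $\phi''$. As $u_{ij}$ is affine in $\tau$, $\tfrac{d^{2}}{d\tau^{2}}g(u_{ij}(\tau))=g''(u_{ij}(\tau))\bigl(\tfrac{x_i-x_j}{2n^{2}}\bigr)^{2}$, and differentiating $g(v)=\sin^{-2}v$ twice gives $g''(v)=\dfrac{2(1+2\cos^{2}v)}{\sin^{4}v}$, so that $g''(v)\bigl(\tfrac{x_i-x_j}{2n^{2}}\bigr)^{2}=\dfrac{\tfrac12+\cos^{2}v}{\sin^{4}v}\cdot\dfrac{(x_i-x_j)^{2}}{n^{4}}$. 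Therefore $\phi''(\tau)=-\tfrac{\beta}{2}\sum_{i\ne j}\dfrac{\tfrac12+\cos^{2}u_{ij}(\tau)}{\sin^{4}u_{ij}(\tau)}\cdot\dfrac{(x_i-x_j)^{2}}{n^{4}}$, and substituting this into $H_{n,\beta}(\alpha)-H_{n,\beta}(\theta)=-\int_0^1(1-\tau)\phi''(\tau)\,d\tau$ yields exactly the claimed identity. Finally, since $\tfrac12+\cos^{2}v\ge\tfrac12>0$ and $\sin^{4}u_{ij}(\tau)>0$ along the segment, every summand is nonnegative, so $H_{n,\beta}(\alpha)\ge H_{n,\beta}(\theta)$ with equality iff $x_i=x_j$ for all $i,j$; together with $\sum_i x_i=0$ this forces $x=0$, i.e.\ $\theta=\alpha$, and hence $q(\theta)=Z_n^{-1}e^{H_{n,\beta}(\theta)}$ attains its maximum precisely at $\theta=\alpha$.

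The one genuinely delicate point is the first step: certifying that the interpolating segment $\theta(\tau)$ avoids the poles of $\sin^{-2}$ so that Taylor's theorem applies on all of $[0,1]$. The vanishing of $\phi'(0)$ is a routine symmetry computation, the formula for $g''$ is elementary, and the positivity of the integrand is immediate, so these present no real obstacle.
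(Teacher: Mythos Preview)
Your proof is correct and follows essentially the same route as the paper: define $\phi(\tau)=H_{n,\beta}(\alpha+\tau x/n^{2})$, apply Taylor's formula with integral remainder, verify $\phi'(0)=0$ by a symmetry argument, and compute $\phi''$ explicitly to obtain the stated integral representation and nonnegativity. Your treatment is in fact more careful than the paper's in two places: you justify that the interpolating segment avoids the poles of $\sin^{-2}$ (so $\phi\in C^\infty[0,1]$), and you note that equality forces $x=0$, giving uniqueness of the maximizer; the paper omits both points. Your argument for $\phi'(0)=0$ (pairing $m$ with $n-m$ to show the inner sum $\sum_{m=1}^{n-1}g'(\pi m/n)$ vanishes) differs slightly from the paper's (which simply observes that the two split sums are identical and hence cancel), but both are valid.
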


This implies that the lattice configuration $\theta_i=\frac{ 2 \pi j}{n}+const$ is the ground state of the strongly repelling particle system.

\begin{lemma}\label{lemma}
The following identities hold.
\begin{equation} \label{formula1}
\sum_{k=1}^{n-1} \frac{1}{ \sin^2 \left( \frac{ \pi k}{n} \right)}=\frac{n^2-1}{3}.
\end{equation}

\begin{equation} \label{formula2}
\sum_{k=1}^{n-1} \frac{1}{ \sin^4 \left( \frac{ \pi k}{n} \right)}=\frac{(n^2-1)(n^2+11)}{45}.
\end{equation}

\begin{equation} \label{formula3}
\sum_{k=1}^{n-1} \frac{\sin^2 \left(m  \frac{ \pi k  }{n}  \right)}{ \sin^2 \left( \frac{ \pi k}{n} \right)}=m (n-m) ~~(1 \leq m \leq n-1).
\end{equation}

\begin{equation} \label{formula4}
\sum_{k=1}^{n-1} \frac{\sin^2 \left(m  \frac{ \pi k  }{n}  \right)}{ \sin^4 \left( \frac{ \pi k}{n} \right)}=\frac{m^2(n-m)^2}{3}+\frac{2}{3} m (n-m) ~~(1 \leq m \leq n-1).
\end{equation}
\end{lemma}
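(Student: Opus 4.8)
My plan is to prove all four identities by the same Fourier-analytic mechanism: expand the relevant functions of $\theta=\pi k/n$ into finite exponential sums over the $n$-th roots of unity and use the orthogonality relation $\sum_{k=0}^{n-1} e^{2\pi i k m/n} = n\,\mathbbm{1}_{\{n \mid m\}}$. First I would establish \eqref{formula3} and \eqref{formula4}, since the diagonal sums \eqref{formula1}--\eqref{formula2} then follow as limiting/derivative cases. For \eqref{formula3}, write $\frac{\sin(m\vartheta)}{\sin\vartheta} = \sum_{\ell=0}^{m-1} e^{i(m-1-2\ell)\vartheta}$ (the Dirichlet-type kernel), so that $\frac{\sin^2(m\vartheta)}{\sin^2\vartheta} = \sum_{\ell,\ell'} e^{2i(\ell'-\ell)\vartheta}$ is a trigonometric polynomial of degree $< m$; summing over $\vartheta = \pi k/n$, $k=1,\dots,n-1$, and noting that the $k=0$ term contributes $m^2$, the root-of-unity orthogonality collapses the double sum to the count of pairs $(\ell,\ell')$ with $\ell=\ell'$, giving $nm - m^2 = m(n-m)$ after subtracting the $k=0$ contribution. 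For \eqref{formula4} I would instead use the recursion $\frac{\sin^2(m\vartheta)}{\sin^4\vartheta}$: one can write $\frac{1}{\sin^2\vartheta}\cdot\frac{\sin^2(m\vartheta)}{\sin^2\vartheta}$ and expand the second factor as above, or alternatively use the identity $\frac{\sin^2(m\vartheta)}{\sin^2\vartheta} = m + 2\sum_{j=1}^{m-1}(m-j)\cos(2j\vartheta)$ together with a known closed form for $\sum_k \frac{\cos(2j\vartheta)}{\sin^2\vartheta}$ at $\vartheta=\pi k/n$. The cleanest route is probably to differentiate \eqref{formula3} in the parameter $m$ after writing it as a polynomial identity, or to use the telescoping relation between $\sin^2((m{+}1)\vartheta)/\sin^2\vartheta$ and $\sin^2(m\vartheta)/\sin^2\vartheta$ and induct on $m$.

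For \eqref{formula1} and \eqref{formula2}, the quickest derivation is to take $m \to$ the appropriate value, or rather to use the partial-fraction expansion of $\csc^2$: since $\sum_{k=1}^{n-1}\frac{1}{\sin^2(\pi k/n)}$ is the trace of a circulant-type object, one can recognize it as a special value. Concretely, I would use the known expansion $\frac{1}{\sin^2(\pi z)} = \frac{1}{\pi^2}\sum_{j\in\Z}\frac{1}{(z-j)^2}$ evaluated at $z = k/n$ and summed over $k$, or — more elementarily and in keeping with the finite setup — derive \eqref{formula1} by noting $\frac{1}{\sin^2\vartheta} = \frac{1}{3} + \frac{2}{3}\sum$ of cosines only in a limiting sense, so instead I would get \eqref{formula1} directly from \eqref{formula3} by dividing by $\sin^2\vartheta$ is not legitimate; rather, set $m=1$ in a shifted version. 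In practice the standard trick is: the polynomial $U_{n-1}$ (Chebyshev of the second kind) has roots $\cos(\pi k/n)$, and $\sum 1/\sin^2(\pi k/n) = \sum 1/(1-\cos^2(\pi k/n))$ is a symmetric function of those roots computable from the coefficients of $U_{n-1}$; likewise $\sum 1/\sin^4$ uses the next power-sum. This yields $\frac{n^2-1}{3}$ and $\frac{(n^2-1)(n^2+11)}{45}$ respectively.

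The main obstacle I anticipate is \eqref{formula4}: the fourth-power denominator means the naive Fourier expansion does not terminate, so I cannot simply invoke finite orthogonality. I expect to handle this either by (i) writing $\csc^4 = \csc^2 + \cot^2\csc^2$ and reducing to \eqref{formula3} plus a derivative-in-$\vartheta$ of the $\csc^2$ identity, or (ii) proving the polynomial identity $\sum_{k=1}^{n-1}\frac{\sin^2(m\pi k/n)}{\sin^4(\pi k/n)} = \frac{m^2(n-m)^2}{3} + \frac{2}{3}m(n-m)$ by induction on $m$, where the inductive step uses $\sin^2((m{+}1)\vartheta) - \sin^2(m\vartheta) = \sin((2m{+}1)\vartheta)\sin\vartheta$ so that the increment involves only $\sum_k \frac{\sin((2m+1)\pi k/n)}{\sin^3(\pi k/n)}$, which is in turn reducible to \eqref{formula3}-type sums. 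Once \eqref{formula4} is in place, \eqref{formula2} is the special case $m = n/2$ when $n$ is even and follows by the polynomial-identity-in-$m$ argument in general, with \eqref{formula1} similarly the $\csc^2$ analogue. All the remaining steps are routine finite trigonometric manipulations.
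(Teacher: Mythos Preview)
Your approach is viable but takes a genuinely different route from the paper. The paper treats all four identities uniformly by residue calculus: writing $z_k=e^{2\pi i k/n}$, each sum becomes $\sum_{k=1}^{n-1}\Res_{z=z_k}F(z)$ for an explicit rational function $F$ (e.g.\ $F_1(z)=\frac{1}{(z-1)^2(z^n-1)}$ for \eqref{formula1}, $F_4(z)=\frac{(z^m-1)^2 z^{1-m}}{(z-1)^4(z^n-1)}$ for \eqref{formula4}), and since the sum of all residues vanishes one is reduced to a single Laurent-coefficient computation at $z=1$ (and $z=0$ for \eqref{formula3}--\eqref{formula4}). This handles the $\sin^{-4}$ denominators with no extra difficulty. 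Your Dirichlet-kernel argument for \eqref{formula3} and the Chebyshev/symmetric-function argument for \eqref{formula1}--\eqref{formula2} are correct and arguably more elementary, avoiding complex analysis entirely. For \eqref{formula4}, your inductive scheme does work once made precise: the increment $S_{m+1}-S_m=\sum_k \sin((2m{+}1)\vartheta_k)/\sin^3\vartheta_k$ can be rewritten, via $\cos(2j\vartheta)=1-2\sin^2(j\vartheta)$, entirely in terms of \eqref{formula1} and \eqref{formula3}, so no non-terminating expansion is needed. The trade-off is that the paper's method is a single mechanical computation per identity, whereas yours requires different tools for each and leaves \eqref{formula4} dependent on the other two; on the other hand, your route makes the combinatorial structure (counting pairs in the Dirichlet kernel) more transparent for \eqref{formula3}.
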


 The proof of Lemma \ref{lemma} is given in the Appendix. By the identity (\ref{formula1}) in Lemma \ref{lemma} , we can show that the maximum of $H_{n,\beta}(\theta)$ is

\begin{equation} \label{H_n}
H_{n,\beta}(\alpha)= -\frac{\beta}{2}\sum_{i \neq j} \frac{1}{\sin^2 \left( \frac{\pi (i-j)}{n}\right)}=-\frac{(n^3-n) \beta}{6}.
\end{equation}

 Next lemma shows that typically $H_{n, \beta}(\theta)$ is not far from $H_{n, \beta}(\alpha)$.

\begin{lemma}\label{omega}
For any $C>1$, define 
\begin{equation} \label{Theta}
\Theta=\left \{ \theta \in \T~|~  H_{n,\beta}(\alpha)-H_{n,\beta}(\theta) \leq  C n \log n  \right\}.
\end{equation}
 Then there exists some $c>0$, such that
$$\Prob(\Theta) \geq 1- n^{-cn}.$$
\end{lemma}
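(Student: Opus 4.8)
The plan is to combine the exact expansion from \lref{integral} with a simple lower bound on the partition function $Z_n$, and then estimate the volume of the complement of $\Theta$. First I would write, using \lref{integral} and the elementary bound $\frac{1}{2}+\cos^2 u \leq \frac{3}{2}$ together with a two-sided comparison $|\sin(\tfrac{\pi(i-j)}{n}+\tau\tfrac{x_i-x_j}{2n^2})| \sim |\sin\tfrac{\pi(i-j)}{n}|$ valid on the region where the $x_i$ are not too large (say $|x_i-x_j| \leq n^{2}$, which will be guaranteed on $\Theta^c$ near its boundary by a bootstrap, or simply imposed as an intermediate event), a bound of the shape
\begin{equation}\label{omegaproof1}
H_{n,\beta}(\alpha)-H_{n,\beta}(\theta) \geq c'\,\beta \sum_{i \neq j} \frac{(x_i-x_j)^2}{n^4}\,\frac{1}{\sin^4\!\left(\frac{\pi(i-j)}{n}\right)} \geq c''\,\beta \sum_{i\neq j}\frac{(x_i-x_j)^2}{n^4} = 2 c''\,\beta \sum_i x_i^2,
\end{equation}
where the last step uses $\sum_{i\neq j}(x_i-x_j)^2 = 2n\sum_i x_i^2$ (recall $\sum_i x_i = 0$) and $|\sin\tfrac{\pi(i-j)}{n}|^{-4}\geq 1$. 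Conversely, using $\frac{1}{2}+\cos^2 u \leq \frac{3}{2}$ and the upper comparison for the sine, I get an upper bound of the same form with the kernel $\sin^{-4}$, which via \eqref{formula2} and the crude estimate $(x_i-x_j)^2 \leq 2(x_i^2+x_j^2)$ is at most $C\beta\, n^{-2}\sum_i x_i^2$ plus lower-order terms near the diagonal. The precise constants are irrelevant; what matters is that $H_{n,\beta}(\alpha)-H_{n,\beta}(\theta)$ is comparable to $\sum_i x_i^2$ up to factors polynomial in $n$ on the relevant region.

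Next I would bound $Z_n$ from below. Changing variables $\theta \mapsto (\psi, x)$ on $\T$ (the Jacobian contributes a harmless factor of $n^{-2(n-1)}$ and a factor of $2\pi$ from $\psi$), and restricting the integral to the small box $B = \{\,|x_i| \leq 1, \ \sum_i x_i = 0\,\}$ on which, by the upper bound just derived, $H_{n,\beta}(\alpha)-H_{n,\beta}(\theta) \leq C\beta n$, gives
\begin{equation}\label{omegaproof2}
Z_n \geq c\, n^{-2(n-1)} e^{H_{n,\beta}(\alpha)} e^{-C\beta n} \operatorname{vol}(B) \geq e^{H_{n,\beta}(\alpha)}\, e^{-C' n\log n}
\end{equation}
for $n$ large, since $\operatorname{vol}(B)$ and the other constants are at worst exponential in $n$ with a $\log n$ correction. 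For the complement, on $\Theta^c$ we have $e^{H_{n,\beta}(\theta)} \leq e^{H_{n,\beta}(\alpha)} e^{-Cn\log n}$ pointwise, so
\begin{equation}\label{omegaproof3}
\Prob(\Theta^c) = \frac{1}{Z_n}\int_{\Theta^c} e^{H_{n,\beta}(\theta)}\,d\theta \leq \frac{e^{H_{n,\beta}(\alpha)}e^{-Cn\log n}\,\operatorname{vol}(\T)}{Z_n} \leq e^{-Cn\log n}\,e^{C'n\log n}\,\frac{(2\pi)^n}{n!},
\end{equation}
and choosing $C$ large enough relative to $C'$ (which depends only on $\beta$) makes the right-hand side $\leq n^{-cn}$ for a suitable $c>0$, as claimed.

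The main obstacle is the very first step: making the two-sided comparison of $\sin^4(\tfrac{\pi(i-j)}{n}+\tau\tfrac{x_i-x_j}{2n^2})$ with $\sin^4(\tfrac{\pi(i-j)}{n})$ honest, because a priori nothing prevents some $x_i-x_j$ from being comparable to $n^2$, in which case the perturbation $\tau\tfrac{x_i-x_j}{2n^2}$ is of order one and the two sines need not be comparable — indeed the perturbed sine could be small even when $i-j$ is far from $0$ mod $n$. I would handle this by first establishing the trivial a priori bound that on $\T$ the ordering $\theta_0 \leq \cdots \leq \theta_{n-1}$ together with $\theta_i \in [0,2\pi]$ forces $\alpha_{i+1}-\alpha_i + \tfrac{x_{i+1}-x_i}{n^2} \geq 0$, i.e. consecutive gaps stay nonnegative, and more importantly that the event $\{H_{n,\beta}(\alpha)-H_{n,\beta}(\theta)\leq Cn\log n\}$ itself, via the singular term coming from the smallest gap in \lref{integral}, forces every gap to be bounded below by a negative power of $n$ and hence $\max_i|x_i| = O(n^{2}/\text{polylog})$; feeding this back into the kernel estimate closes the argument. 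Once the geometry is controlled, everything else is just packaging the estimates \eqref{omegaproof1}–\eqref{omegaproof3} with the normalization \eqref{normalization} and \eqref{H_n} already available in the excerpt.
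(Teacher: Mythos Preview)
Your core steps \eqref{omegaproof2}--\eqref{omegaproof3} are correct and are exactly the paper's argument: lower-bound $Z_n$ by restricting to a tiny box around the ground state, upper-bound the numerator on $\Theta^c$ by the pointwise inequality $e^{H_{n,\beta}(\theta)}\leq e^{H_{n,\beta}(\alpha)-Cn\log n}$ that \emph{defines} $\Theta^c$, and combine with $\operatorname{vol}(\T)=(2\pi)^n/n!$. But the display \eqref{omegaproof1} and your entire ``main obstacle'' paragraph are superfluous, and the obstacle you name is not an obstacle for this lemma. The only place a sine comparison enters is the upper bound on $H_{n,\beta}(\alpha)-H_{n,\beta}(\theta)$ inside the box $B=\{|x_i|\leq 1\}$, and there the perturbation $\tau(x_i-x_j)/(2n^2)=O(n^{-2})$ is negligible next to the smallest unperturbed sine $\sin(\pi/n)\sim n^{-1}$, so the kernel comparison is immediate with no bootstrap required. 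The two-sided kernel comparison over a large region of $x$'s, and the gap-control bootstrap you sketch at the end, belong to the proof of the \emph{next} lemma (\lref{xinomega}), not this one; you have conflated the two.

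One quantitative point: your conclusion ``choosing $C$ large enough relative to $C'$'' undersells what the argument actually yields. Since on the box $|x_i|\leq 1$ one has $H_{n,\beta}(\alpha)-H_{n,\beta}(\theta)=O(n)$ (not merely $O(n\log n)$), the only $n\log n$ loss in the $Z_n$ lower bound comes from the Jacobian $n^{-2(n-1)+O(1)}$; combining with Stirling for $(2\pi)^n/n!$ gives $\Prob(\Theta^c)\leq e^{-(C-1)n\log n+O(n)}$, so any $C>1$ works, matching the lemma as stated. (Incidentally, your intermediate claim ``at most $C\beta\, n^{-2}\sum_i x_i^2$'' is off by $n^2$: by \eqref{formula2} the kernel sum $\sum_{j\neq i} n^{-4}\sin^{-4}(\pi(i-j)/n)$ is of order $1$, giving $C\beta\sum_i x_i^2$, which on $B$ is the $O(n)$ you then correctly use in \eqref{omegaproof2}.)
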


\begin{remark}
 If we choose $C=1$, then the condition on the set $\Theta$ should be modified as $H_{n,\beta}(\alpha)-H_{n,\beta}(\theta) \leq n \log n -C'n$ for some $C'>0$.
\end{remark}

 Using Lemma \ref{integral} and Lemma \ref{omega}, we have

\begin{lemma}\label{xinomega}
For any $C>1$, define $\Theta$ as in (\ref{Theta}). If $\theta \in \Theta$,  then there exists some positive constant $C_0$ such that
\begin{equation} \label{}
\sum_{i \neq j} \frac{(x_i-x_j)^2}{n^4 \sin^4 \left( \frac{\pi(i-j)}{n} \right)} \leq C_0 n \log^{3} n.
\end{equation}

 Moreover, for all $0 \leq i \neq j \leq n-1$, 
\begin{equation} \label{}
|x_i-x_j| \leq C_0 |i-j|_o n^{\frac{1}{2}} \log^{\frac{3}{2}} n,
\end{equation}
where
\begin{equation}\label{dist}
|i-j|_o=\min \{ |i-j|, n-|i-j|\}.
\end{equation}
\end{lemma}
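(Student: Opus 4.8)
The plan is to combine Lemma \ref{integral} (the exact Taylor remainder formula for $H_{n,\beta}(\alpha)-H_{n,\beta}(\theta)$) with Lemma \ref{omega} (the high-probability bound $H_{n,\beta}(\alpha)-H_{n,\beta}(\theta)\le Cn\log n$ on $\Theta$), and then to bootstrap: the energy bound first gives a crude pointwise bound on the $x_i-x_j$, which feeds back into the integrand of Lemma \ref{integral} to let us replace the perturbed sines $\sin\!\big(\tfrac{\pi(i-j)}{n}+\tau\tfrac{x_i-x_j}{2n^2}\big)$ by the unperturbed ones $\sin\!\big(\tfrac{\pi(i-j)}{n}\big)$ up to a constant factor, which then upgrades to the sharp estimate.

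First I would extract a preliminary bound. Each summand in Lemma \ref{integral} is nonnegative, so for any fixed pair $i\ne j$,
\begin{equation}
\frac{\beta}{2}\,\frac{(x_i-x_j)^2}{n^4}\int_0^1 \frac{\tfrac12+\cos^2(\cdots)}{\sin^4(\cdots)}(1-\tau)\,d\tau \le H_{n,\beta}(\alpha)-H_{n,\beta}(\theta)\le Cn\log n .
\end{equation}
Since $\tfrac12+\cos^2\ge\tfrac12$ and $|\sin u|\le 1$, the integrand is at least $\tfrac14$, so this already yields $(x_i-x_j)^2\lesssim n^5\log n$, i.e. $|x_i-x_j|\lesssim n^{5/2}\sqrt{\log n}$. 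This is far from the claimed bound, but it is enough to control the size of the perturbation: $\big|\tau\tfrac{x_i-x_j}{2n^2}\big|\lesssim n^{1/2}\sqrt{\log n}$, which unfortunately is \emph{not} $o(1)$. So a single pair is too weak; instead I would argue that the perturbation is small \emph{on average}, or equivalently use that $\psi$ is chosen so that $\sum x_i=0$ together with a telescoping/summation argument. Concretely, I expect one shows first that $\max_i|x_i|\lesssim n^{1/2}\log^{3/2}n$ must be derived before, not after, the argument — so the right order is: use the full sum $\sum_{i\ne j}\tfrac{(x_i-x_j)^2}{n^4}\cdot\tfrac14 \le Cn\log n$ to get $\sum_{i\ne j}(x_i-x_j)^2\lesssim n^5\log n$, hence (by $\sum x_i=0$) $\sum_i x_i^2\lesssim n^4\log n$ and $\max_i|x_i|\lesssim n^2\sqrt{\log n}$; still too weak pairwise. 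The genuine mechanism must be that adjacent differences are what the quartic singularity controls strongly: for $|i-j|_o$ small, $\sin^{-4}\!\big(\tfrac{\pi(i-j)}{n}\big)\sim \big(\tfrac{n}{\pi|i-j|_o}\big)^4$, so the bound $\sum_{i\ne j}\tfrac{(x_i-x_j)^2}{n^4\sin^4(\pi(i-j)/n)}\le C_0 n\log^3 n$ (once established) forces $(x_i-x_j)^2 \lesssim |i-j|_o^4 \log^3 n / 1$ near the diagonal, and telescoping over at most $|i-j|_o$ steps gives $|x_i-x_j|\lesssim |i-j|_o\cdot n^{1/2}\log^{3/2}n$.

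So the cleanest route is: (i) establish the first displayed inequality of the lemma, $\sum_{i\ne j}\tfrac{(x_i-x_j)^2}{n^4\sin^4(\pi(i-j)/n)}\le C_0 n\log^3 n$, by invoking Lemma \ref{integral} and Lemma \ref{omega} and showing the perturbed integrand is comparable to the unperturbed one — this requires a self-improving argument where the crude bound $|x_i-x_j|\lesssim n^{5/2}\sqrt{\log n}$ (too weak) is replaced by first restricting attention to pairs with $|i-j|_o\le n/\log n$ or similar, where the singularity is strong enough that the energy bound forces $|x_i-x_j|=o(n^2)$, hence the perturbation $\tfrac{x_i-x_j}{2n^2}=o(1)$, validating $\sin(\cdots)\asymp\sin(\pi(i-j)/n)$ there; then (ii) from (i), for any pair $i,j$, write $x_i-x_j$ as a telescoping sum of consecutive differences $x_{k+1}-x_k$ along the shorter arc, apply Cauchy–Schwarz against the weights $\sin^{-4}(\pi/n)\asymp n^4$, and collect $|i-j|_o$ terms to get $|x_i-x_j|^2 \lesssim |i-j|_o \cdot \big(\sum_{k}(x_{k+1}-x_k)^2\big) \lesssim |i-j|_o\cdot \tfrac{n\log^3 n \cdot n^4}{n^4} = |i-j|_o\cdot n\log^3 n$, i.e. $|x_i-x_j|\lesssim |i-j|_o\, n^{1/2}\log^{3/2}n$.

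The main obstacle is step (i): decoupling the perturbed trigonometric weights in Lemma \ref{integral} from the clean weights $\sin^{-4}(\pi(i-j)/n)$ requires an a priori bound on $|x_i-x_j|$ of order $o(n^2)$, but the only a priori bound available from the energy alone is $O(n^{5/2}\sqrt{\log n})$ per pair, which is worse. Resolving this circularity is the crux — I expect it is handled either by a continuity/bootstrap argument on the scale of the perturbation (show the set where $\max|x_i-x_j|$ exceeds $\epsilon n^2$ has probability bounded by the energy cost, which is super-exponentially small by Lemma \ref{omega}), or by a more careful partial-sum estimate that never needs the worst-case pointwise bound — exploiting that $\sum_{i\ne j}(x_i-x_j)^2/(n^4 \sin^4)$ being finite already forces most consecutive differences to be small, and the few large ones cannot accumulate because of the constraint $\sum x_i=0$. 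Once (i) is in hand, step (ii) is a routine Cauchy–Schwarz-plus-telescoping computation.
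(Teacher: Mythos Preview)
Your diagnosis of the structure is correct --- Lemma~\ref{integral} plus $\theta\in\Theta$ gives the starting inequality, and the second claim follows from the first by telescoping over consecutive differences (the paper uses the triangle inequality rather than Cauchy--Schwarz, but your version also works). Your step~(ii) is fine.

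The genuine gap is in step~(i): you correctly isolate the circularity --- you need $|x_i-x_j|=o(n^2)$ to compare the perturbed and unperturbed weights, but the energy alone only gives $O(n^{5/2}\sqrt{\log n})$ per pair --- yet you do not find the mechanism that breaks it. The paper's resolution rests on two ingredients you have not identified. First, there is a \emph{hard domain constraint} $|x_i-x_j|\le 2\pi n^2$, coming simply from $\theta\in [0,2\pi]^n$ (not from the energy). Second, the paper introduces a threshold parameter $\eta_n\sim\log^{1/2}n$ and splits the index pairs into
\[
I_1=\{(i,j):|x_i-x_j|<n\,\eta_n\,|i-j|_o\},\qquad I_2=I\setminus I_1 .
\]
On $I_1$ the perturbation $\tau\tfrac{x_i-x_j}{2n^2}$ is at most $\tfrac{\eta_n|i-j|_o}{2n}$, which is comparable to the main term $\tfrac{\pi|i-j|_o}{n}$ up to a factor $\eta_n$; this costs $\eta_n^4$ in the estimate and produces exactly the $\log^3 n$ in the conclusion. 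The substance is showing $I_2=\emptyset$. Here the paper evaluates the integral from Lemma~\ref{integral} explicitly for a pair in $I_2$, observes that if $(i,j)\in I_2$ then by the triangle inequality at least $|i-j|_o$ further pairs of the form $(i,k)$ or $(k,j)$ also lie in $I_2$, and sums their contributions against the energy bound $Cn\log n$. This forces $|i-j|_o\gtrsim n\eta_n/\log n$ and hence $|x_i-x_j|\gtrsim n^2\eta_n^2/\log n$; with $\eta_n=M\log^{1/2}n$ and $M$ large this exceeds $2\pi n^2$, contradicting the domain constraint. Your proposed ``continuity/bootstrap'' and ``partial-sum'' ideas are in the right spirit but neither recovers this argument, and in particular neither uses the crucial $|x_i-x_j|\le 2\pi n^2$ input.
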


 Taking the Taylor expansion of the joint probability function $q(\theta)$ around the critical configuration $\theta=\alpha$, we have that for some $\delta \in [0,1]$, 
\begin{equation} \label{taylor}
q(\theta)=\frac{1}{Z_n} e^{H_{n,\beta}(\alpha)} \exp \left\{  \frac{\beta}{4} \sum_{i \neq j} \frac{-\frac{3}{2}+\sin^2 \left(\frac{\pi(i-j)}{n}\right)}{ n^4 \sin^4 \left( \frac{\pi(i-j)}{n}  \right)} (x_i-x_j)^2  \right\}
\end{equation}
$$\times \exp\left\{\frac{\beta}{12} \left [\sum_{i \neq j} \frac{3 \cos \left( \frac{\pi(i-j)}{n} +\delta \frac{x_i-x_j}{2n^2} \right)}{ n^6 \sin^5 \left( \frac{\pi(i-j)}{n} +\delta \frac{x_i-x_j}{2n^2} \right)}-\frac{\cos \left( \frac{\pi(i-j)}{n} +\delta \frac{x_i-x_j}{2n^2} \right)}{ n^6 \sin^3 \left( \frac{\pi(i-j)}{n} +\delta \frac{x_i-x_j}{2n^2} \right)} \right ] (x_i-x_j)^3 \right\}.$$

 Denote the quadratic term by
\begin{equation} \label{G}
G(x) \defeq \frac{\beta}{4} \sum_{i \neq j} \frac{-\frac{3}{2}+\sin^2 \left(\frac{\pi(i-j)}{n}\right)}{ n^4 \sin^4 \left( \frac{\pi(i-j)}{n}  \right)} (x_i-x_j)^2,
\end{equation}
and the cubic term as 
\begin{equation} \label{F}
F(x) \defeq \frac{\beta}{12} \left [\sum_{i \neq j} \frac{3 \cos \left( \frac{\pi(i-j)}{n} +\delta \frac{x_i-x_j}{2n^2} \right)}{ n^6 \sin^5 \left( \frac{\pi(i-j)}{n} +\delta \frac{x_i-x_j}{2n^2} \right)}-\frac{\cos \left( \frac{\pi(i-j)}{n} +\delta \frac{x_i-x_j}{2n^2} \right)}{ n^6 \sin^3 \left( \frac{\pi(i-j)}{n} +\delta \frac{x_i-x_j}{2n^2} \right)} \right ] (x_i-x_j)^3.
\end{equation}

 Using (\ref{theta}), consider the change of variable $\theta \rightarrow (x, \psi)$, where $x$ is a degenerate vector on the hyperplane $\Gamma$,
\begin{equation} \label{Gamma}
\Gamma=\left\{x \in \R^n : \sum_{i=0}^{n-1} x_i=0\right\}.
\end{equation}

 Let 
\begin{equation}\label{f}
f(x)=q(\theta(x, \phi)).
\end{equation}

 Note that the joint probability density $f$ only depends on $x$. But the domain $\Omega$ depends on both $x$ and $\psi$. 
If $ \theta \in \T $, then
$$ (x, \psi) \in \Omega=\left\{ \Gamma \times [-\pi+\frac{\pi}{n},\pi+\frac{\pi}{n}]: x_{i}-x_{i-1} \geq -2 \pi n;  -\frac{x_0}{n^2} \leq \psi \leq \frac{2 \pi}{n}-\frac{x_{n-1}}{n^2} \right\}.$$
Thus, the marginal density function for $x$ is
\begin{equation}\label{px'}
p(x)=\int_{-\frac{x_0}{n^2}}^{ \frac{2 \pi}{n}-\frac{x_{n-1}}{n^2}} f(x) d\psi=\left( \frac{2 \pi}{n}-\frac{x_{n-1}-x_0}{n^2}\right) f(x),
\end{equation}
where 
\begin{equation} \label{Gamma'}
x \in \Lambda= \left\{x \in \Gamma :  x_{i}-x_{i-1} \geq -2 \pi n; x_0 \geq - \pi n(n+1); x_{n-1} \leq \pi n(n+1); x_{n-1}-x_0 \leq 2 \pi n \right\}.
\end{equation}
Otherwise, if $x \in \Lambda^c$, $p(x)=0.$  For any measurable subset $A \subset \Gamma$, denote
\begin{equation}\label{Probx}
\Prob_x(A)=\int_{A} p(x) dx.
\end{equation}

 It follows from Lemma \ref{omega} and Lemma \ref{xinomega} that there exists a subset of $\Lambda$, namely\\
\begin{equation} \label{Gamma_o}
\Gamma_D=\left\{ x \in \Gamma:  \max_{i \neq j} \frac{|x_i-x_{j}|}{|i-j|_o} \leq D n^{\frac{1}{2}} \log^{\frac{3}{2}} n; x_0 \geq - \pi n(n+1); x_{n-1} \leq \pi n(n+1) \right\},
\end{equation}
such that 
\begin{equation} \label{xgamma}
\Prob_x( \Gamma_D^c) \leq n^{-cn}.
\end{equation}

 In addition, if $x \in \Gamma_D$, the probability density of $x$ can be written as 
\begin{equation} \label{px}
p(x) =\frac{1}{{\tilde{Z}}_n}   e^{   G(x) +F(x) }(1+o_n(1)),
\end{equation}
where 
\begin{equation} \label{tZ_n}
{\tilde{Z}_n}=\int_{\Gamma_D} e^{G(x)+F(x)} dx.
\end{equation}

 Let us define a Gaussian distribution on the hyperplane $\Gamma$ by its density
\begin{equation} \label{pg}
p_g(x)=\frac{1}{Z_g} e^{G(x) } ,
\end{equation}
where 
\begin{equation} \label{Z_g}
Z_g=\int_{\Gamma} e^{G(x)} dx.
\end{equation}
For any measurable subset $A \subset \Gamma$, denote 
\begin{equation}\label{Probg}
\Prob_g(A)=\int_{A} p_g(x) dx.
\end{equation}
We use $\E_g$ and $\V_g$ to denote the expectation and variance taken under this Gaussian probability measure.

\begin{remark}
Refining the argument used in the proof of Lemma \ref{xinomega}, we can also prove that there exists a subset $\Gamma_{D'} \subset \Gamma$ such that (1) and (2) hold where
\begin{enumerate}
\item If $x \in \Gamma_{D'}$, we have $\max_{j} |x_j| \ll n$, and thus $\phi \sim \frac{1}{n}$. 
\item $\Prob_x(\Gamma_{D'}^c)=o_n(1)$. 
\end{enumerate}
In addition, let $\tilde{\psi}=n \psi$. Then $$p(\tilde{\psi} |x)=\frac{f(x)}{\int_{-\frac{x_0}{n^2}}^{2 \pi-\frac{x_{n-1}}{n}} f(x) d \tilde{\psi} }=\frac{1}{2 \pi-\frac{x_{n-1}-x_0}{n}}=\frac{1}{2 \pi}(1+o_n(1)).$$
One can show that $n \psi$ and $x$ are asymptotically independent from each other and $n \psi$ converges to the uniform distribution on $[0,2 \pi]$. However, we are not going to use this in the paper.
\end{remark}

\section{Main theorems}
In this section, we formulate our main results. We start with an auxiliary proposition. Recall that we have defined $\Prob_x$ and $\Prob_g$ in (\ref{px})-(\ref{tZ_n}), (\ref{Probx}) and (\ref{pg})-(\ref{Probg}) respectively. Also, $F(x)$ is defined in (\ref{F}).
\begin{proposition}\label{tech1}
There exists a subset ${\Gamma'_{}} \subset \Gamma$,  such that 
$$\Prob_x({\Gamma'_{}})=1-o_n(1), ~~~ \Prob_g({\Gamma'_{}})=1-o_n(1),$$ and
$$\sup_{x \in \Gamma'} F(x)=o_n(1).$$
\end{proposition}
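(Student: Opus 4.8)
The plan is to take $\Gamma'=\mathcal G\cap\{|\tilde F|\le\delta_n\}$, where $\tilde F$ is the leading cubic form inside $F$, $\mathcal G$ is a ``smooth configuration'' set on which $F-\tilde F$ is negligible, and $\delta_n\downarrow 0$ slowly. Write
\[
\tilde F(x)\defeq\frac{\beta}{4}\sum_{i\neq j}\frac{\cos(\pi(i-j)/n)}{n^{6}\sin^{5}(\pi(i-j)/n)}(x_i-x_j)^{3}-\frac{\beta}{12}\sum_{i\neq j}\frac{\cos(\pi(i-j)/n)}{n^{6}\sin^{3}(\pi(i-j)/n)}(x_i-x_j)^{3},
\]
the polynomial obtained from $F$ in \eqref{F} by setting the mean-value point $\delta$ to $0$; its coefficients $c(i-j)$ satisfy $|c(m)|\asymp n^{-1}|m|_o^{-5}$. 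Put
\[
\mathcal G=\bigl\{x\in\Gamma:\ |x_i-x_j|\le (\log n)^{A}\,|i-j|_o^{1/2}\ \text{for all }i\neq j\bigr\},\qquad \Gamma'=\mathcal G\cap\{|\tilde F|\le\delta_n\},
\]
with $A$ a large fixed constant and $\delta_n=(\log n)^{B}n^{-1/2}$. Since $\mathcal G\subset\Gamma_D$ (the bound on $|x_i-x_j|$ is far stronger than the one in \eqref{Gamma_o}, and it forces $|x_0|,|x_{n-1}|\ll n^{2}$), on $\mathcal G$ the shift $\delta(x_i-x_j)/2n^{2}$ is $o_n(1)$ times $\pi(i-j)/n$, so the shifted $\sin,\cos$ differ from their values at $\pi(i-j)/n$ by a factor $1+O(|x_i-x_j|/(n|i-j|_o))$; a first-order expansion gives the deterministic bounds $|F-\tilde F|\lesssim n^{-2}\sum_{i\neq j}|i-j|_o^{-6}|x_i-x_j|^{4}\lesssim (\log n)^{4A}/n$ and $|\tilde F|\lesssim n^{-1}\sum_{i\neq j}|i-j|_o^{-5}|x_i-x_j|^{3}\lesssim(\log n)^{3A}$ on $\mathcal G$. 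Hence on $\Gamma'$ one has $|F|\le|\tilde F|+|F-\tilde F|\le\delta_n+O((\log n)^{4A}/n)=o_n(1)$, so $\sup_{\Gamma'}F=o_n(1)$ is automatic, and the whole task reduces to proving $\Prob_g(\Gamma')=1-o_n(1)$ and $\Prob_x(\Gamma')=1-o_n(1)$.

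For $\Prob_g$ I would first record that, using the Fourier expansion $x_j=\sum_{k=1}^{n-1}\hat x_k e^{2\pi ijk/n}$ on $\Gamma$ and $\sum_j(x_{j+m}-x_j)^{2}=4n\sum_k|\hat x_k|^{2}\sin^{2}(\pi km/n)$, the two pieces of $G$ in \eqref{G} combine, via \eqref{formula3}–\eqref{formula4}, into $G(x)=-\frac{\beta}{2n^{3}}\sum_{k=1}^{n-1}k^{2}(n-k)^{2}|\hat x_k|^{2}$; in particular $\E_g|\hat x_k|^{2}\asymp n^{3}k^{-2}(n-k)^{-2}$ and $x_{j+m}-x_j$ is centred Gaussian with $\V_g(x_{j+m}-x_j)\asymp|m|_o$, so a Gaussian tail bound and a union bound over the $O(n^{2})$ pairs give $\Prob_g(\mathcal G^{c})\le n^{2}e^{-c(\log n)^{2A}}\to 0$. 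The core fact is that $\tilde F$ lies in the third Wiener chaos with $\|\tilde F\|_{L^{2}(\Prob_g)}=O(n^{-1/2})$: indeed $\E_g[\tilde F]=0$, its first-chaos component $3\sum_m c(m)\V_g(x_m-x_0)\sum_j(x_{j+m}-x_j)$ vanishes because $\sum_j(x_{j+m}-x_j)=0$, and writing $\tilde F=\sum_m c(m)S_m$ with $S_m=\sum_j(x_{j+m}-x_j)^{3}$, Wick's theorem gives $\E_g[(x_{m_1}-x_0)^{3}(x_{m_2+a}-x_a)^{3}]=9\sigma^{2}(m_1)\sigma^{2}(m_2)\rho(m_1,m_2,a)+6\rho(m_1,m_2,a)^{3}$ with $\rho(m_1,m_2,a)=\Cov_g(x_{m_1}-x_0,x_{m_2+a}-x_a)$; the decisive cancellation is $\sum_a\rho(m_1,m_2,a)=\Cov_g\bigl(x_{m_1}-x_0,\sum_a(x_{m_2+a}-x_a)\bigr)=0$, which kills the $\sigma^{2}\sigma^{2}\rho$ term and leaves $\V_g[\tilde F]=6n\sum_{m_1,m_2}c(m_1)c(m_2)\sum_a\rho(m_1,m_2,a)^{3}$. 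Passing to Fourier, $\widehat{\rho(m_1,m_2,\cdot)}_p=\E_g|\hat x_p|^{2}(e^{2\pi ipm_2/n}-1)(e^{-2\pi ipm_1/n}-1)$, and then $\sum_a|\rho|^{3}\le(\max_a|\rho|)\sum_a\rho^{2}$ together with $\max_a|\rho|\le\sigma(m_1)\sigma(m_2)\asymp|m_1|_o^{1/2}|m_2|_o^{1/2}$ and $\E_g|\hat x_p|^{2}\asymp n^{3}p^{-2}(n-p)^{-2}$ yield $\V_g[\tilde F]\lesssim n^{2}\sum_p(\E_g|\hat x_p|^{2})^{2}\bigl(\sum_m|c(m)|\sigma(m)\sin^{2}(\pi pm/n)\bigr)^{2}\lesssim n^{2}\cdot n\cdot n^{-4}=n^{-1}$. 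The hypercontractive tail bound for a degree-$3$ chaos then gives $\Prob_g(|\tilde F|>\delta_n)\le C\exp\!\bigl(-c(\delta_n\|\tilde F\|_2^{-1})^{2/3}\bigr)\le C\exp(-c(\log n)^{2B/3})\to 0$, hence $\Prob_g(\Gamma')=1-o_n(1)$.

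For $\Prob_x$ I would use \eqref{px}: since $\Gamma'\subset\mathcal G\subset\Gamma_D$, on $\Gamma'$ one has $p(x)=\tilde Z_n^{-1}e^{G(x)+F(x)}(1+o_n(1))$ with $G+F=-(H_{n,\beta}(\alpha)-H_{n,\beta}(\theta))\le 0$; because $|F|=o_n(1)$ on $\Gamma'$ this gives $\Prob_x(\Gamma')=(1+o_n(1))\bigl(Z_g/\tilde Z_n\bigr)\Prob_g(\Gamma')$. The lower bound $\tilde Z_n\ge(1-o_n(1))Z_g$ is immediate: $\tilde Z_n=\int_{\Gamma_D}e^{G+F}\ge\int_{\Gamma'}e^{G+F}\ge e^{-o_n(1)}\int_{\Gamma'}e^{G}=(1-o_n(1))Z_g\,\Prob_g(\Gamma')=(1-o_n(1))Z_g$. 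Combined with the matching upper bound $\tilde Z_n\le(1+o_n(1))Z_g$ this yields $Z_g/\tilde Z_n=1+o_n(1)$ and hence $\Prob_x(\Gamma')=(1+o_n(1))\Prob_g(\Gamma')=1-o_n(1)$.

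The single missing ingredient — and the main obstacle — is the upper bound $\tilde Z_n\le(1+o_n(1))Z_g$, i.e.\ the Laplace-type asymptotics of the original partition function $Z_n$ around its ground state $\theta=\alpha$. The difficulty is that on the exceptional part of $\Gamma_D$ only the crude estimate $|F|\le Cn^{3/2}(\log n)^{9/2}$ is available, so trying to bound $\int_{\Gamma_D}e^{G+F}$ by transferring the $\Prob_g$-rare event $\{|\tilde F|>\delta_n\}$ (or $\mathcal G^{c}$) and using $e^{G+F}\le e^{|F|}e^{G}$ multiplies a merely polynomially small probability by an enormous factor. Circumventing this should require exploiting the exact identity $G+F=-(H_{n,\beta}(\alpha)-H_{n,\beta}(\theta))$ more carefully — the lower bound $H_{n,\beta}(\alpha)-H_{n,\beta}(\theta)\ge c\sum_{i\neq j}(x_i-x_j)^{2}/|i-j|_o^{4}$ from \lref{integral} (the integrand there is $\ge\tfrac12\csc^{4}$) is the natural entry point — or a bootstrap that improves the control on the support and on $\tilde Z_n/Z_g$ in tandem. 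I expect this partition-function estimate, rather than the (clean) Wiener-chaos computation of the second paragraph, to be where the real work lies.
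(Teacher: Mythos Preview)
Your diagnosis in the last paragraph is exactly right: the missing ingredient is the partition-function upper bound, and the cure is indeed a bootstrap that tightens the a priori control on configurations and on $\tilde Z_n/Z_g$ in tandem. That is precisely what the paper does. Its Step~1 (Lemmas~\ref{step1}, \ref{step2} and Proposition~\ref{iteration}) runs the iteration
\[
B_{k+1}=\sqrt{\tfrac{4cB_k+24\log n}{c'}},\qquad B_0=Dn^{1/2}(\log n)^{3/2},
\]
alternating between two moves: (i) from $\Prob_x\bigl\{\max_{i\neq j}|x_i-x_j|/|i-j|_o\ge B_k\bigr\}\le kn^{-10}$ deduce $e^{-cB_k}\le\tilde Z_n/Z_g\le e^{cB_k}$, using that on the good set $|F|\le \tfrac{cB_k}{n}|G|$ so that $e^{G+F}$ is sandwiched between $e^{G(1\pm cB_k/n)}$, and a rescaling $\tilde x=\sqrt{1\pm cB_k/n}\,x$ turns the latter into $e^{G}$ at the price of $e^{\pm cB_k/2}$; and (ii) from the ratio bound deduce the probability estimate with the smaller threshold $B_{k+1}$ by transferring the Gaussian tail $\Prob_g\bigl(|\xi_j^{(l)}|/\sqrt l\ge B_{k+1}\bigr)\le e^{-c'B_{k+1}^2}$. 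The fixed point is $B_\infty\sim\sqrt{\log n}$, reached after $O(\log\log n)$ steps, and one ends with
\[
\Prob_x(A_\infty^c)\le n^{-8},\qquad e^{-C_2\sqrt{\log n}}\le \tilde Z_n/Z_g\le e^{C_2\sqrt{\log n}},
\]
where $A_\infty=\bigl\{x\in\Gamma_D:\max_{i\neq j}|x_i-x_j|/|i-j|_o\le C_1\sqrt{\log n}\bigr\}$.

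Two remarks on how this meshes with your plan. First, the paper does \emph{not} aim for $\tilde Z_n\le(1+o_n(1))Z_g$ as an intermediate step; the bootstrap only delivers the sub-polynomial factor $e^{C_2\sqrt{\log n}}$, and that suffices. On $A_\infty$ one has the deterministic bound $|F|=O((\log n)^{3/2})$, so transferring any $\Prob_g$-tail that decays polynomially (or faster) costs at most a factor $e^{O((\log n)^{3/2})}$, which is harmless. The sharp asymptotics $\tilde Z_n=(1+o_n(1))Z_g$ emerges only at the very end, \emph{after} Proposition~\ref{tech1} is in hand. So in your scheme you should not chase the upper bound on $\tilde Z_n$ as a standalone lemma; instead, run the bootstrap to produce the paper's $A_\infty$ (and the $e^{\pm C\sqrt{\log n}}$ ratio), then transfer the tail of $\tilde F$ through $A_\infty$. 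Your stronger set $\mathcal G$ can, if you wish, be recovered at that stage by one further transfer, since $\Prob_g(\mathcal G^c)\le n^2e^{-c(\log n)^{2A}}$ beats $e^{O((\log n)^{3/2})}$ for $A\ge 1$.

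Second, your Wiener-chaos treatment of $\tilde F$ is genuinely cleaner than the paper's. The paper does not exploit the cancellation $\sum_a\rho(m_1,m_2,a)=\Cov_g\bigl(x_{m_1}-x_0,\sum_a(x_{m_2+a}-x_a)\bigr)=0$; it bounds the $9\sigma^2\sigma^2\rho$ Wick term pointwise via $|\rho(l,l,a)|\lesssim l^2/|a|_o$ (Proposition~\ref{covofxi}), picks up an extra $\log n$, and then uses only Chebyshev to get $\Prob_g\bigl(|n^{-1}\sum_j(\xi_j^{(l)})^3|>n^{-1/4}\bigr)\lesssim l^4(\log n)/\sqrt n$ (Lemma~\ref{highmoment}), summed over $l\le(\log n)^3$. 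Your hypercontractive tail $\exp\bigl(-c(\delta_n/\|\tilde F\|_2)^{2/3}\bigr)$ would make the transfer far more comfortable, since it beats the $e^{O((\log n)^{3/2})}$ loss with room to spare; the paper instead relies on the fact that even a polynomial $n^{-1/2}$ decay still dominates the sub-polynomial growth $e^{C\sqrt{\log n}}$ coming out of the bootstrap.
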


 Proposition \ref{tech1} immediately implies that the total variation distance between $\Prob_x$ and $\Prob_g$ goes to zero as $n$ goes to infinity.
\begin{theorem} \label{tech2}
$$\sup_{A \subset \Gamma} |\Prob_x(A)-\Prob_g(A)|=o_n(1),$$
where the supremum at the LHS is taken over all measurable subsets $A \subset \Gamma$.
\end{theorem}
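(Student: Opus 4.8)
The plan is to deduce the total-variation bound from Proposition \ref{tech1} together with the fact, already established in the excerpt, that both $\Prob_x$ and $\Prob_g$ concentrate on a "good" set where the densities $p(x)$ and $p_g(x)$ agree up to a factor $1+o_n(1)$. Concretely, let $\Gamma' \subset \Gamma$ be the set from Proposition \ref{tech1}, and replace it by $\Gamma'' \defeq \Gamma' \cap \Gamma_D$; since $\Prob_x(\Gamma_D^c) \le n^{-cn}$ by \eqref{xgamma} and (arguing as in Lemma \ref{xinomega}, or by a comparison to be carried out below) $\Prob_g(\Gamma_D^c) = o_n(1)$ as well, we still have $\Prob_x(\Gamma'') = 1-o_n(1)$ and $\Prob_g(\Gamma'') = 1-o_n(1)$, while now $\sup_{x\in\Gamma''} |F(x)| = o_n(1)$ and, by \eqref{px}, $p(x) = \frac{1}{\tilde Z_n} e^{G(x)+F(x)}(1+o_n(1))$ on $\Gamma''$.

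First I would control the normalizing constants. On $\Gamma''$ we have $e^{F(x)} = 1+o_n(1)$ uniformly, so $\int_{\Gamma''} e^{G(x)+F(x)}dx = (1+o_n(1))\int_{\Gamma''} e^{G(x)}dx$. Combining this with $\tilde Z_n = (1+o_n(1))\int_{\Gamma_D} e^{G(x)+F(x)}dx$ (from \eqref{px}–\eqref{tZ_n} integrated over $\Gamma$, using $\Prob_x(\Gamma_D^c)\le n^{-cn}$), $\int_{\Gamma_D} e^{G}dx = (1+o_n(1))Z_g$ (since $\Prob_g(\Gamma_D^c)=o_n(1)$), and the analogous statement on the part of $\Gamma_D$ outside $\Gamma'$ (negligible under $\Prob_g$), one gets $\tilde Z_n = (1+o_n(1)) Z_g$. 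Hence on $\Gamma''$,
\begin{equation}\label{ratio}
\frac{p(x)}{p_g(x)} = \frac{Z_g}{\tilde Z_n}\, e^{F(x)}(1+o_n(1)) = 1+o_n(1),
\end{equation}
uniformly in $x \in \Gamma''$.

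Next I would assemble the total-variation estimate in the standard way. For any measurable $A \subset \Gamma$,
\begin{equation}\label{tv-split}
|\Prob_x(A)-\Prob_g(A)| \le |\Prob_x(A\cap\Gamma'')-\Prob_g(A\cap\Gamma'')| + \Prob_x(\Gamma''^c) + \Prob_g(\Gamma''^c).
\end{equation}
The last two terms are $o_n(1)$ by construction. For the first term, write it as $\left|\int_{A\cap\Gamma''}\big(p(x)-p_g(x)\big)\,dx\right| \le \int_{\Gamma''} |p(x)-p_g(x)|\,dx = \int_{\Gamma''} \left|\frac{p(x)}{p_g(x)}-1\right| p_g(x)\,dx$, which by \eqref{ratio} is bounded by $o_n(1)\cdot\Prob_g(\Gamma'') \le o_n(1)$. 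Taking the supremum over $A$ in \eqref{tv-split} yields $\sup_{A\subset\Gamma}|\Prob_x(A)-\Prob_g(A)| = o_n(1)$, which is the claim. (Equivalently, $\sup_A|\Prob_x(A)-\Prob_g(A)| = \tfrac12\int_\Gamma|p-p_g|$, and the same splitting bounds this integral.)

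The one genuinely non-routine point is the input $\Prob_g(\Gamma_D^c) = o_n(1)$ (and more generally $\Prob_g(\Gamma''^c)=o_n(1)$), which is needed both for the normalization comparison and for the second line of \eqref{tv-split}; Proposition \ref{tech1} asserts $\Prob_g(\Gamma')=1-o_n(1)$ but the concentration of the Gaussian measure $\Prob_g$ on $\Gamma_D$ must be obtained separately. I would get it by a direct Gaussian computation: $G(x) = -\frac{\beta}{4}\sum_{i\ne j}\frac{\frac32 - \sin^2(\pi(i-j)/n)}{n^4\sin^4(\pi(i-j)/n)}(x_i-x_j)^2$ is a negative-definite quadratic form on $\Gamma$ whose covariance structure is explicit via the discrete Fourier basis (using the identities in Lemma \ref{lemma}), so $\E_g[(x_i-x_j)^2]$ and hence the tail probabilities of $\max_{i\ne j}|x_i-x_j|/|i-j|_o$ and of $|x_0|,|x_{n-1}|$ can be estimated by a union bound over the $O(n^2)$ pairs against Gaussian tail bounds, giving decay faster than any power of $n$ for the threshold $D n^{1/2}\log^{3/2}n$. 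This is the step I expect to take the most care; everything else is bookkeeping with \eqref{px}, \eqref{tZ_n}, \eqref{pg}–\eqref{Z_g} and Proposition \ref{tech1}.
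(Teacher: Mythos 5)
Your argument is correct and follows essentially the same route as the paper: the paper's Step 3 of Section 6 proves Theorem \ref{tech2} by exactly this density-ratio comparison on a good set ($\Omega'=A_\infty\cap\Omega_\infty$, playing the role of your $\Gamma''$), deriving $\tilde Z_n=(1+O(\log^{-3/2}n))Z_g$ and then the total-variation splitting you describe. The one input you flag as needing separate proof, $\Prob_g(\Gamma_D^c)=o_n(1)$, is precisely the paper's Lemma \ref{ggamma}, obtained by the same union bound over pairs against Gaussian tails (via Proposition \ref{covofxi}) that you propose.
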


 Based on Theorem \ref{tech2}, we obtain the main theorem. For each fixed $n$, we construct a random function in $ C[0, 2 \pi]$, denoted as  $\zeta_n(t)$, by letting $\zeta_n \left(  \frac{2 \pi j}{n}\right) =\frac{x_j}{\sqrt{n}}$ and then connecting these lattice points with straight segments. Define the limiting random function to be
\begin{equation} \label{zeta}
\zeta(t)=\sqrt{\frac{2}{\beta}}\Real \left(  \sum_{k=1}^{\infty} \frac{1}{k} e^{ikt} Z_k  \right)=\sqrt{\frac{2}{\beta}}\sum_{k=1}^{\infty} \left( \frac{\cos kt}{k} X_k- \frac{\sin kt}{k} Y_k \right),
\end{equation}
where $X_k,Y_k$ are i.i.d. real standard Gaussian random variables, and $Z_k$ are i.i.d. complex standard Gaussian random variables. Note that this is a well defined random function since the variance is bounded. It can be viewed as an analogue of $(\ref{T})$ in the CUE case. We have:

\begin{theorem}\label{weak}
$\zeta_n(t)$ converges to $\zeta(t)$ in finite dimensional distributions. Furthermore, the functional convergence takes place. In other words, $\zeta_n(t)$ converges to $\zeta(t)$ in distribution weakly on the space $C[0, 2\pi]$.
\end{theorem}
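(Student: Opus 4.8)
The plan is to deduce \tref{weak} from \tref{tech2} in two stages: first replace the true law $\Prob_x$ of the vector $x=(x_0,\dots,x_{n-1})$ by the Gaussian law $\Prob_g$, and then carry out the weak-convergence analysis entirely inside the Gaussian model, where everything is explicit. Since \tref{tech2} gives $\sup_A|\Prob_x(A)-\Prob_g(A)|=o_n(1)$ in total variation on $\Gamma$, and the map $x\mapsto\zeta_n(\cdot)$ from $\Gamma$ to $C[0,2\pi]$ is (Borel) measurable, the pushforward laws of $\zeta_n$ under $\Prob_x$ and under $\Prob_g$ also have total variation distance $o_n(1)$. Total variation convergence to $0$ of a difference of laws is preserved under any measurable pushforward, so it suffices to prove that $\zeta_n$, \emph{built from the Gaussian vector $x\sim\Prob_g$}, converges in distribution on $C[0,2\pi]$ to $\zeta$. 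From now on I work under $\Prob_g$.

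The second stage has the two standard components: convergence of finite-dimensional distributions, and tightness. For the finite-dimensional distributions I would first diagonalize the quadratic form $G(x)$ in the discrete Fourier basis $e_k=\big(e^{2\pi i jk/n}\big)_{j=0}^{n-1}$ on $\Gamma$. Because $G(x)=\frac{\beta}{4}\sum_{i\ne j} a_{i-j}(x_i-x_j)^2$ is a function of differences with a circulant kernel $a_m=\big(-\tfrac32+\sin^2(\pi m/n)\big)\big/\big(n^4\sin^4(\pi m/n)\big)$, it is diagonal in this basis with eigenvalues computable from $\sum_m a_m\big(1-\cos(2\pi mk/n)\big)$; using \lref{lemma}, in particular \eqref{formula3} and \eqref{formula4}, these eigenvalues come out (up to the overall $\beta$ and sign) to $\sim \tfrac{1}{12}\,k^2(n-k)^2/n^4$ for $1\le k\le n-1$, so that the Gaussian coefficients $\hat x_k\defeq\frac1n\sum_j x_j e^{-2\pi i jk/n}$ are, for each fixed $k$ and $n\to\infty$, independent complex Gaussians with variance $\sim \frac{2}{\beta}\,\frac{1}{k^2}\cdot\frac{1}{n}$ (the precise constant to be read off from the diagonalization). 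Writing $x_j=\sum_k \hat x_k e^{2\pi i jk/n}$ and hence $\zeta_n(2\pi j/n)=\frac{x_j}{\sqrt n}=\Real\sum_k \sqrt n\,\hat x_k\,e^{2\pi i jk/n}$, the rescaled Fourier coefficients $\sqrt n\,\hat x_k$ converge jointly, for $k$ in any fixed finite range, to independent complex Gaussians of variance $\tfrac{2}{\beta}k^{-2}$, i.e. to $\sqrt{2/\beta}\,Z_k/k$. Evaluating at finitely many times $t_1,\dots,t_m$ and truncating the Fourier series at a level $K$ then controlling the tail (the tail contributes variance $O(\sum_{k>K}k^{-2})$ uniformly in $n$, a consequence of the eigenvalue asymptotics and \lref{lemma}), I get convergence of $\big(\zeta_n(t_1),\dots,\zeta_n(t_m)\big)$ to $\big(\zeta(t_1),\dots,\zeta(t_m)\big)$; the piecewise-linear interpolation error between a lattice point $2\pi j/n$ and a nearby $t$ is $O\big(\max_i|x_{i+1}-x_i|/(n\sqrt n)\big)=o_n(1)$ in probability by \lref{xinomega}/\eqref{Gamma_o}, so evaluating at $t$ versus the nearest lattice point is harmless.

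For tightness in $C[0,2\pi]$ I would use the standard Kolmogorov-type criterion: since $\zeta_n(0)$ is tight (its variance is bounded), it is enough to bound $\E_g\big[\,|\zeta_n(s)-\zeta_n(t)|^{p}\,\big]\le C|s-t|^{1+\gamma}$ for some $p,\gamma>0$ uniformly in $n$, or equivalently, because these are Gaussian processes, to get the second-moment bound $\E_g\big[(\zeta_n(s)-\zeta_n(t))^2\big]\le C|s-t|^{2-\epsilon}$ together with the Gaussian hypercontractivity that upgrades it to the needed higher moment. The increment variance is again explicit in the Fourier picture: $\E_g\big[(\zeta_n(s)-\zeta_n(t))^2\big]=\frac{2}{\beta}\sum_{k=1}^{n-1}\frac{1}{k^2}\big|e^{iks}-e^{ikt}\big|^2(1+o_n(1))\le \frac{C}{\beta}\sum_{k\ge1}\frac{\min(1,k^2|s-t|^2)}{k^2}\le C'|s-t|\log(1/|s-t|)$, which is $\le C''|s-t|^{1-\epsilon}$ and suffices; the interpolation being piecewise linear only decreases increments between interpolation nodes, so the same bound holds for $\zeta_n$ itself with a harmless constant. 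Combining finite-dimensional convergence with tightness gives weak convergence on $C[0,2\pi]$, and pulling back through the total-variation bound of \tref{tech2} completes the proof.

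The main obstacle I expect is the bookkeeping in the diagonalization step: one must (i) get the exact eigenvalues of the circulant form $G$ from the trigonometric sums in \lref{lemma} and confirm that after the $\sqrt n$ rescaling they produce precisely the constant $\sqrt{2/\beta}\,k^{-1}$ in \eqref{zeta} (rather than merely the right order), and (ii) make the truncation/tail estimates uniform in $n$ so that the fixed-$K$ Gaussian CLT plus a small tail genuinely yields the full finite-dimensional limit — this is where the identities \eqref{formula3}--\eqref{formula4} do the real work, since they give the exact $k$-dependence of the eigenvalues for \emph{all} $k$, not just bounded $k$, which is exactly what one needs to control $\sum_{k>K}$ uniformly. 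Everything else (the TV reduction, tightness via the increment bound, the interpolation error) is routine once the spectral picture is in place.
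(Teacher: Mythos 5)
Your proposal is correct, and the reduction to the Gaussian case via \tref{tech2} together with the Fourier diagonalization for the finite-dimensional distributions is exactly what the paper does (the diagonalization is carried out in Section 5, culminating in the representation \eqref{C} and the error term $e_n$ with $\V_g(e_n)=O(1/n)$; note one bookkeeping slip in your writeup: with $\hat x_k=\frac1n\sum_j x_je^{-2\pi ijk/n}$ the coefficient appearing in $x_j/\sqrt n$ is $\hat x_k/\sqrt n$, not $\sqrt n\,\hat x_k$, and the exact eigenvalue is $\lambda_k=\beta k^2(n-k)^2/n^4$, but since you explicitly defer the constants to the diagonalization this is cosmetic). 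Where you genuinely diverge is tightness. The paper does not use the Kolmogorov--Chentsov moment criterion: it verifies Billingsley's modulus-of-continuity condition directly, via a hand-rolled dyadic chaining argument (Lemmas \ref{delta1} and \ref{delta2}) that decomposes $[s,t]$ into dyadic blocks $\Delta^{(k)}_l$, applies a union bound over all scales between $k_0\sim-\log|s-t|$ and $k_r\sim\log n$, and treats the sub-lattice scale $|t-s|\le C_0/n$ separately; this produces the explicit (and slightly lossy) modulus $c_1|t-s|^{1/20}+c_2n^{-1/10}\log n$. Your route — the increment variance bound $\E_g\bigl[(\zeta_n(s)-\zeta_n(t))^2\bigr]\le C|s-t|$ (which follows from \pref{covofxi} exactly as in the paper, with the piecewise-linear interpolation only shrinking increments between nodes), upgraded by the Gaussian fourth-moment identity to $\E_g|\zeta_n(s)-\zeta_n(t)|^4\le C|s-t|^2$, and then the standard Kolmogorov tightness criterion — is shorter and delegates the chaining to a textbook theorem; the paper's explicit chaining buys a quantitative H\"older-type modulus of continuity for $\zeta_n$ that the moment criterion does not state, though nothing in the theorem requires it. Both arguments rest on the same covariance input, \pref{covofxi}, and both transfer back to $\Prob_x$ through the total-variation bound of \tref{tech2} in the way you describe.
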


 Finally, we finish this section by formulating two corollaries.
\begin{corollary}\label{CLT}
Consider periodic function $g$ on $S^1$ with complex Fourier coefficients $\{c_k\}_{k \geq 0}$,  where $c_k= \frac{1}{2 \pi}\int_{0}^{2\pi} g(x) e^{ikx} dx $, such that $\sum_{k=-\infty}^{\infty} |k|^{\frac{3}{2}} |c_k| < \infty$,
 then 
\begin{equation} \label{}
\E \exp \left( { i t \left( \sqrt{n} \sum_{j=0}^{n-1} g(\theta_j) -n^{\frac{3}{2}} c_0  \right)} \right)=\exp \left( { -\frac{t^2}{ \beta} \sum_{k=1}^{\infty} |c_k|^2} \right) (1+o_n(1)).
\end{equation}
In other words, 
$\sqrt{n} \sum_{j=0}^{n-1} g(\theta_j)-n^{\frac{3}{2}} c_0$ converges in distribution to $N \left(0, \frac{2}{ \beta} \sum_{k=1}^{\infty} |c_k|^2 \right)$.
\end{corollary}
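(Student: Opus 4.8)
The plan is to deduce the statement from the functional convergence in \tref{weak}. Fix $t\in\R$ and write the Fourier expansion of $g$ as $g(x)=\sum_{k\in\Z}c_ke^{-ikx}$ with $c_{-k}=\overline{c_k}$ ($g$ being real-valued). With $\alpha_j=\frac{2\pi j}{n}+\psi$ and $\theta_j=\alpha_j+\frac{x_j}{n^2}$, I would start from the exact identity
\[
g(\theta_j)=g(\alpha_j)+g'(\alpha_j)\,\frac{x_j}{n^2}+R_j,\qquad R_j=\int_{\alpha_j}^{\theta_j}\bigl(g'(s)-g'(\alpha_j)\bigr)\,ds,
\]
and correspondingly split $\sqrt{n}\sum_{j=0}^{n-1}g(\theta_j)-n^{3/2}c_0=I+II+III$, where $I=\sqrt{n}\bigl(\sum_jg(\alpha_j)-nc_0\bigr)$, $II=\frac1n\sum_jg'(\alpha_j)\frac{x_j}{\sqrt n}$, and $III=\sqrt{n}\sum_jR_j$. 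The hypothesis $\sum_k|k|^{3/2}|c_k|<\infty$ will enter in exactly three places: (a) $\sum_{|k|\ge n}|c_k|\le n^{-3/2}\sum_{|k|\ge n}|k|^{3/2}|c_k|=o(n^{-3/2})$; (b) $g'$ has an absolutely convergent Fourier series, hence $g'\in C(S^1)$ with $\|g'\|_\infty\le\sum_k|k||c_k|<\infty$; (c) $g'$ is Hölder-$\tfrac12$, since bounding $|g'(s)-g'(u)|\le\sum_k|k||c_k|\min(2,|k|\,|s-u|)$ and splitting the sum at $|k|=|s-u|^{-1}$ gives $|g'(s)-g'(u)|\le 3\bigl(\sum_k|k|^{3/2}|c_k|\bigr)|s-u|^{1/2}$.

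Terms $I$ and $III$ are negligible. For $I$, exact summation of exponentials gives $\sum_jg(\alpha_j)=n\sum_{m\in\Z}c_{mn}e^{-imn\psi}$, whence $\bigl|\sum_jg(\alpha_j)-nc_0\bigr|\le n\sum_{|k|\ge n}|c_k|=o(n^{-1/2})$ by (a), uniformly in $\psi$, so $I=o_n(1)$ deterministically. For $III$, by (c) and $|\theta_j-\alpha_j|=|x_j|/n^2$,
\[
|III|\le C\sqrt{n}\sum_j|\theta_j-\alpha_j|^{3/2}=\frac{C}{n^{5/2}}\sum_j|x_j|^{3/2}\le\frac{C}{n^{3/2}}\Bigl(\max_j|x_j|\Bigr)^{3/2},
\]
which on the event $\{x\in\Gamma_{D'}\}$ — of probability $1-o_n(1)$ by the Remark following (\ref{Probg}) — is $o_n(1)$, since there $\max_j|x_j|\ll n$; thus $III\to 0$ in probability.

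The main term is $II=\frac1n\sum_jg'\bigl(\tfrac{2\pi j}{n}+\psi\bigr)\zeta_n\bigl(\tfrac{2\pi j}{n}\bigr)$, using $\frac{x_j}{\sqrt n}=\zeta_n(\tfrac{2\pi j}{n})$. On $\{x\in\Gamma_{D'}\}$ one has $|\psi|=O(n^{-1})$, so replacing $g'(\tfrac{2\pi j}{n}+\psi)$ by $g'(\tfrac{2\pi j}{n})$ changes $II$ by at most $\omega_{g'}(|\psi|)\,\|\zeta_n\|_\infty$ (writing $\omega_h(\delta):=\sup_{|s-u|\le\delta}|h(s)-h(u)|$), which is $o_P(1)$ because $\omega_{g'}(|\psi|)\to0$ by (b) and $\|\zeta_n\|_\infty=O_P(1)$ by tightness of $\{\zeta_n\}$ in $C[0,2\pi]$. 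It then remains to study $\Phi_n(\zeta_n)$ with $\Phi_n(f):=\frac1n\sum_jg'(\tfrac{2\pi j}{n})f(\tfrac{2\pi j}{n})$. For every compact $K\subset C[0,2\pi]$ — bounded and equicontinuous by Arzelà–Ascoli — a Riemann-sum estimate gives
\[
\sup_{f\in K}\Bigl|\Phi_n(f)-\frac1{2\pi}\int_0^{2\pi}g'(t)f(t)\,dt\Bigr|\le\|g'\|_\infty\,\omega_K(\tfrac{2\pi}{n})+\Bigl(\sup_{f\in K}\|f\|_\infty\Bigr)\omega_{g'}(\tfrac{2\pi}{n})\to 0.
\]
Since $f\mapsto\frac1{2\pi}\int_0^{2\pi}g'f\,dt$ is continuous on $C[0,2\pi]$ and $\zeta_n\Rightarrow\zeta$, the extended continuous mapping theorem yields $\Phi_n(\zeta_n)\Rightarrow\frac1{2\pi}\int_0^{2\pi}g'(t)\zeta(t)\,dt$, and therefore $\sqrt{n}\sum_jg(\theta_j)-n^{3/2}c_0\Rightarrow\frac1{2\pi}\int_0^{2\pi}g'(t)\zeta(t)\,dt$.

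Finally I would identify the limit: writing $c_k=A_k+iB_k$ for $k\ge1$ we have $g'(t)=2\sum_{k\ge1}k\bigl(B_k\cos kt-A_k\sin kt\bigr)$, while (\ref{zeta}) gives $\zeta(t)=\sqrt{2/\beta}\sum_{k\ge1}\tfrac1k\bigl(X_k\cos kt-Y_k\sin kt\bigr)$; by orthogonality of $\{\cos kt,\sin kt\}$ on $[0,2\pi]$, $\frac1{2\pi}\int_0^{2\pi}g'(t)\zeta(t)\,dt=\sqrt{2/\beta}\sum_{k\ge1}\bigl(B_kX_k+A_kY_k\bigr)$, a centered Gaussian of variance $\frac2\beta\sum_{k\ge1}(A_k^2+B_k^2)=\frac2\beta\sum_{k\ge1}|c_k|^2$, so its characteristic function at $t$ equals $\exp\bigl(-\tfrac{t^2}{\beta}\sum_{k\ge1}|c_k|^2\bigr)$; passing to characteristic functions in the weak convergence above gives exactly the asserted identity with error $(1+o_n(1))$. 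I expect term $II$ to be the crux: converting the discrete functional $\Phi_n(\zeta_n)$ into $\frac1{2\pi}\int g'\zeta$ genuinely requires the functional (not merely finite-dimensional) part of \tref{weak} together with uniform control of $\Phi_n\to\Phi$ over tight families of paths, while term $III$ is what pins the regularity threshold down, Hölder-$\tfrac12$ of $g'$ (equivalently $\sum_k|k|^{3/2}|c_k|<\infty$) being exactly what lets the second-order Taylor remainder survive the $\sqrt{n}$- and $\sum_j$-amplifications given the a priori bound $\max_j|x_j|\ll n$. Alternatively one can bypass \tref{weak} and compute $II$ directly by expanding $e^{-ikx_j/n^2}$, using $\sum_jx_je^{-2\pi ikj/n}=\sqrt{n}\,\widehat{x}_k$ and the Fourier diagonalization of $G$ from \lref{lemma} — under which, via \tref{tech2}, the $\widehat{x}_k$ become asymptotically independent Gaussians with $\E|\widehat{x}_k|^2\sim n^2/(\beta k^2)$ — to read off the same characteristic function, the tails again being controlled by $\sum_k|k|^{3/2}|c_k|<\infty$.
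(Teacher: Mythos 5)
Your proof is correct, but it routes the main term through different machinery than the paper. The paper never touches the functional convergence of \tref{weak}: after the same Taylor split and the same treatment of the deterministic term $I$, it transfers the characteristic function of $\mathfrak{s}_n(x)$ from $\Prob_x$ to the Gaussian measure $\Prob_g$ via \tref{tech2}, and then computes $\V_g\bigl(\sqrt{n}\sum_j g'(\alpha_j)x_j/n^2\bigr)$ explicitly from the representation (\ref{x/n}) of $x_j$ in terms of the i.i.d.\ Gaussians $X_k,Y_k$, using discrete orthogonality of $\{e^{2\pi i jk/n}\}$ and noting that the $\psi$-dependence disappears because $|c_ke^{ik\psi}|=|c_k|$ (this is essentially the ``alternative'' you sketch in your last sentences). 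You instead recognize $II$ as the Riemann-sum functional $\Phi_n(\zeta_n)$, prove $\Phi_n\to\Phi$ uniformly on compacts of $C[0,2\pi]$, and invoke the extended continuous mapping theorem together with \tref{weak} to get $\Phi_n(\zeta_n)\Rightarrow\frac{1}{2\pi}\int_0^{2\pi}g'\zeta$, identifying the variance by $L^2$-orthogonality. Both identifications of the limiting variance agree ($\frac14\sum(a_k^2+b_k^2)=\sum|c_k|^2$ in the paper's notation). The paper's route is lighter on inputs — it needs only the total-variation statement of \tref{tech2}, not tightness — and yields the characteristic-function identity with an explicit error inherited from (\ref{diff}); your route is softer and exhibits the limit as the natural pairing $\frac{1}{2\pi}\int g'\zeta$, at the cost of invoking the full functional CLT. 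Two small remarks: your reliance on the unproved Remark about $\Gamma_{D'}$ for $\max_j|x_j|\ll n$ is avoidable, since tightness already gives $\max_j|x_j|=\sqrt{n}\,\|\zeta_n\|_\infty=O_P(\sqrt{n})$, which makes both $III$ and the $\psi$-replacement immediate (note the paper instead bounds $|x_j|\le n^{1/2+\gamma}$ under $\Prob_g$); and your restriction of the lattice sum $\sum_je^{-2\pi ikj/n}=n\mathbbm{1}_{n\mid k}$ to exact multiples of $n$ is actually cleaner than the paper's inclusion of $k=\pm n/2$.
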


\begin{remark}
It follows from the proof of Corollary \ref{CLT} that the statement holds under a milder condition on the the decay of Fourier series of $f$, i.e. $\sum_{k=1}^{\infty} |c_{kn}| \ll n^{-\frac{3}{2}}$. Furthermore, Corollary \ref{CLT} is expected to hold when $f \in C^{1+\epsilon}(\abc)$.
\end{remark}

\begin{corollary}\label{max}
$\max_{0 \leq i \leq n-1} \left|\frac{x_j}{\sqrt{n}}\right|$ converges in distribution to $\sup_{t \in [0, 2\pi]} |\zeta(t)|$.
\end{corollary}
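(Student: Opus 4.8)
The plan is to deduce the corollary from the functional convergence already established in \tref{weak} together with the continuous mapping theorem. First I would observe that the sup-norm functional $\Phi \colon C[0,2\pi] \to \R$, $\Phi(h) = \sup_{t \in [0,2\pi]} |h(t)|$, is continuous (indeed $1$-Lipschitz) with respect to the uniform topology on $C[0,2\pi]$. Since \tref{weak} gives $\zeta_n \Rightarrow \zeta$ weakly on $C[0,2\pi]$, the continuous mapping theorem yields $\Phi(\zeta_n) \Rightarrow \Phi(\zeta)$, i.e. $\sup_{t \in [0,2\pi]} |\zeta_n(t)| \to \sup_{t \in [0,2\pi]} |\zeta(t)|$ in distribution.

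The one remaining point is that the corollary is phrased in terms of $\max_{0 \le j \le n-1} |x_j/\sqrt{n}|$, the maximum over the lattice points, rather than $\sup_t |\zeta_n(t)|$, the sup over the whole interval of the piecewise-linear interpolant. But because $\zeta_n$ is piecewise linear with nodes at $t = 2\pi j/n$, the sup of $|\zeta_n|$ over each linear segment is attained at one of its two endpoints, so
\begin{equation*}
\sup_{t \in [0,2\pi]} |\zeta_n(t)| = \max_{0 \le j \le n-1} \left| \zeta_n\!\left(\tfrac{2\pi j}{n}\right) \right| = \max_{0 \le j \le n-1} \left| \frac{x_j}{\sqrt{n}} \right|,
\end{equation*}
where one also uses the periodicity $\zeta_n(2\pi) = \zeta_n(0)$ built into the construction. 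Hence the two quantities are literally equal for every $n$, and the convergence in distribution of one is the convergence in distribution of the other.

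I do not expect a serious obstacle here: the argument is essentially a one-line application of the continuous mapping theorem once \tref{weak} is in hand, the only thing to check being the elementary identification of the lattice maximum with the sup of the interpolant. If one wanted to be fully careful about the distributional statement, the mildest technical care is in noting that $\Phi(\zeta)$ has a continuous (in fact everywhere non-atomic, by a standard argument on Gaussian suprema) distribution function, so that convergence in distribution of the suprema can if desired be upgraded to convergence of the corresponding distribution functions at every point; but this is not needed for the statement as written.
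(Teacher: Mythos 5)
Your proposal is correct and is exactly the argument the paper intends: the corollary is left as an immediate consequence of \tref{weak}, obtained by applying the continuous mapping theorem with the ($1$-Lipschitz) sup-norm functional on $C[0,2\pi]$ and noting that the sup of the piecewise-linear interpolant $|\zeta_n|$ is attained at a lattice node. Your additional check that $\sup_t|\zeta_n(t)|=\max_j|x_j/\sqrt{n}|$ (using convexity of $|\cdot|$ on each segment and periodicity at $t=2\pi$) is the only detail worth recording, and you have it right.
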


 To simplify the notations, the proofs of these results are written for $\beta=2$. The general case $\beta>0$ is essentially identical.

\section{Proofs of the Lemmas in Section 2}

 We start this section by proving Lemma \ref{integral}.

\begin{proof}(Lemma \ref{integral})
 Define $$\phi(\tau) \defeq H_{n,2}(\alpha+\tau \frac{x}{n^2}).$$
We compute its first and second derivative with respect to $\tau$,
\begin{equation} \label{first}
\phi'(\tau)=\sum_{i \neq j} \frac{\cos \left( \frac{\pi(i-j)}{n} +\tau \frac{x_i-x_j}{2n^2} \right)}{ \sin^3 \left( \frac{\pi(i-j)}{n} +\tau \frac{x_i-x_j}{2n^2} \right)} \cdot \frac{x_i-x_j}{n^2};
\end{equation}

\begin{equation} \label{second}
\phi''(\tau)=- \sum_{i \neq j} \frac{\frac{1}{2}+\cos^2 \left( \frac{\pi(i-j)}{n} +\tau \frac{x_i-x_j}{2n^2} \right)}{ \sin^4 \left( \frac{\pi(i-j)}{n} +\tau \frac{x_i-x_j}{2n^2} \right)} \cdot \left( \frac{x_i-x_j}{n^2} \right)^2.
\end{equation}

 Note that $$\phi'(0)=\sum_{i \neq j} \frac{\cos \left( \frac{\pi(i-j)}{n}  \right)}{ \sin^3 \left( \frac{\pi(i-j)}{n}  \right)} \cdot \frac{x_i-x_j}{n^2}=\frac{1}{n^2} \left( \sum_{i \neq j} \frac{\cos \left( \frac{\pi(i-j)}{n}  \right)}{ \sin^3 \left( \frac{\pi(i-j)}{n}  \right)} \cdot x_i-\sum_{i \neq j} \frac{\cos \left( \frac{\pi(i-j)}{n}  \right)}{ \sin^3 \left( \frac{\pi(i-j)}{n}  \right)} \cdot x_j \right)$$
$$=\frac{1}{n^2}  \sum_{i}\left( \sum_{k=1}^{n-1} \frac{\cos \frac{\pi k}{n} }{ \sin^3  \frac{\pi k}{n}  } \right) x_i- \frac{1}{n^2}  \sum_{j} \left( \sum_{l=1}^{n-1} \frac{\cos  \frac{\pi l}{n}  }{ \sin^3  \frac{\pi l}{n} } \right) x_j =0,$$ and $\phi''(0) \leq 0$. Thus,
$$H_{n,2}(\alpha)-H_{n,2}(\theta)=\phi(0)-\phi(1)=-\int_0^1 (1-\tau) \phi''(\tau) d \tau$$
$$=\sum_{i \neq j}\frac{(x_i-x_j)^2}{n^4} \int_0^1 \frac{\frac{1}{2}+\cos^2 \left( \frac{\pi(i-j)}{n} +\tau \frac{x_i-x_j}{2n^2} \right)}{ \sin^4 \left( \frac{\pi(i-j)}{n} +\tau \frac{x_i-x_j}{2n^2} \right)} \cdot (1- \tau) d \tau \geq 0.$$
This implies that $H_{n,2}(\theta)$ obtains its maximum at $\theta=\alpha$.

\end{proof}

 Next, we turn our attention to proving Lemma \ref{omega}.

\begin{proof}(Lemma \ref{omega})
It follows from the definition of $\Theta$ in (\ref{Theta}) and the trigonometric identity (\ref{H_n}) that
\begin{equation} \label{z3}
\Prob(\Theta^{c})=\frac{1}{Z_n} \int_{\Omega^c} e^{H_{n,2}(\theta)} d \theta  \leq \left( e^{\frac{n^3-n}{3}-C n \log n} \right) \frac{1}{Z_n} \mu(\T) =\frac{(2 \pi)^n}{n!} e^{\frac{n^3-n}{3}-C n \log n}  \frac{1}{Z_n},
\end{equation}
where $\mu$ denote the Lebesgue measure on $\R^n$.
Choose any $0< C' <C$ and define a subset of $\Theta$,
\begin{equation} \label{}
\Theta'=\left \{ \theta \in \T~|~  H_{n,2}(\alpha)-H_{n,2}(\theta) \leq C' n \log n   \right\}.
\end{equation}
Then 
\begin{equation} \label{z1}
Z_n=\int_{\T} e^{H_{n,2}(\vec{\theta})} d \theta \geq \int_{\Theta'} e^{H_{n,2}(\alpha)-C' n \log n} d \theta=e^{\frac{n^3-n}{3}-C' n \log n} \mu(\Theta').
\end{equation}
Note that if  there exists some constant $M>0$ such that $|x_j| \leq M$  , then 
$$H_{n,2}(\alpha)-H_{n,2}(\theta) \leq \sum_{i \neq j}\frac{4 M^2}{ n^4 \sin^4 \left( \frac{\pi(i-j)}{n} \right)}  \leq 4 M^2 \sum_{i \neq j} \frac{1}{ \pi^4 |i-j|^4} \leq C'n \log n.$$
Therefore, $$\left \{  \theta \in \T~|~ \theta_i-\alpha_i \leq \frac{M}{n^2} ,~~\forall~0 \leq i \leq n-1 \right\} \subset \Theta',$$
and thus the Lebesgue measure of the set $\Theta'$ can be bounded from below as
\begin{equation} \label{z2}
\mu(\Theta') \geq \left(\frac{M}{n^2}\right)^n = e^{n \log M  -2 n \log n}.
\end{equation}
Therefore, by (\ref{z1}) and (\ref{z2}),
\begin{equation}
{Z_n} \geq e^{\frac{n^3-n}{3}-C' n \log n+n \log M  -2 n \log n}.
\end{equation}
Combining it with (\ref{H_n}) and (\ref{z3}), we obtain
\begin{equation}
\Prob(\Theta^{c}) \leq \frac{(2 \pi)^n}{n!} e^{-(C-C') n \log n-n \log M +2 n \log n}  \leq C'' e^{-(C-C'-1) n \log n}=o_n(1),
\end{equation}
provided $C>1$ and $0<C'<C-1$.
\end{proof}

 Using Lemma \ref{omega}, we finish this section by giving the proof of Lemma \ref{xinomega}.

\begin{proof}(Lemma \ref{xinomega})

 By Lemma \ref{omega}, if $\theta \in \Theta$, then for some constant $C>1$,

\begin{equation} \label{ine}
\sum_{i \neq j}\frac{(x_i-x_j)^2}{n^4} \int_0^1 \frac{(1-\tau) d \tau}{ \sin^4 \left( \frac{\pi(i-j)}{n} +\tau \frac{x_i-x_j}{2n^2} \right)} \leq C n \log n. 
\end{equation}

 Let $I=\left\{ (i,j)~|~ 0 \leq i \neq j \leq n-1  \right\}$, $I_1=\left\{ (i,j) \in I~|~ |x_i-x_j| < n \eta_n |i-j|_o \right\}$, and $I_2=I \setminus  I_1$. Let $\eta_n \gg 1$. Then by (\ref{ine}) we have 
$$ C n \log n \geq \sum_{I_1}\frac{(x_i-x_j)^2}{n^4} \int_0^1 \frac{(1-\tau) d \tau}{ \sin^4 \left( \frac{\pi(i-j)}{n} +\tau \frac{x_i-x_j}{2n^2} \right)}  \geq  \sum_{I_1}\frac{(x_i-x_j)^2}{n^4} \int_0^1 \frac{(1-\tau) d \tau}{\left(\frac{ \pi |i-j|_o}{n} +\tau \frac{|x_i-x_j|}{2n^2} \right)^4} $$
$$\geq C' \sum_{I_1} (x_i-x_j)^2 \frac{1}{\eta_n^4 |i-j|_o^4}.$$
Thus, \begin{equation} \label{}
\sum_{I_1}  \frac{(x_i-x_j)^2}{ |i-j|_o^4} \leq C'' n \eta_n^4 \log n.
\end{equation}
Next, it can be shown that $I_2 =\emptyset$ for $\eta_n$ satisfying 
\begin{equation}\label{etanew}
\eta_n \geq M \log^{\frac{1}{2}} n
\end{equation}
for sufficient large $M>0$.  Note that
\begin{equation} \label{int}
 \int_0^1 \frac{(1-\tau) d \tau}{ \sin^4 \left( \frac{\pi (i-j)}{n} +\tau \frac{x_i-x_j}{2n^2} \right)}  \geq \int_0^1 \frac{(1-\tau) d \tau}{  \left( \frac{\pi |i-j|_o}{n} +\tau \frac{|x_i-x_j|}{2n^2} \right)^4} =\frac{\frac{3 \pi |i-j|_o}{n}+\frac{|x_i-x_j|}{n^2}}{6 \left( \frac{\pi |i-j|_o}{n} \right)^3 \left( \frac{\pi |i-j|_o}{n}+\frac{|x_i-x_j|}{2n^2}\right)^2}.
\end{equation}
If $(i,j) \in I_2$, then $$\frac{|x_i-x_j|}{n^2} \geq \eta_n \frac{|i-j|_o}{n},$$
and
\begin{equation} \label{l}
\mbox{RHS of } (\ref{int}) \geq const \frac{ \eta_n \frac{|i-j|_o}{n}}{\left( \frac{|i-j|_o}{n} \right)^3 \left( \frac{|x_i-x_j|}{n^2} \right)^2 } \geq const \frac{ \eta_n n^6}{\left( |i-j|_o \right)^2 \left(|x_i-x_j|\right)^2 }.
\end{equation}
Note that by the triangle inequality,  if $(i,j) \in I_2$, then there exists at least $|i-j|_o$ index pairs belonging to $I_2$, in the form of $(i, k)$ or $(k, j)$ where k is between i and j.
Thus by (\ref{ine}), (\ref{int}) and (\ref{l}), $$C n \log n \geq C' \sum_{I_2} \frac{ \eta_n n^2}{ |i-j|_o^2 } \geq C' n^2 \eta_n \frac{1}{|i-j|_o}.$$
This implies that
$ |i-j|_o \geq C'' n \eta_n \log^{-1} n$, and thus $ |x_i-x_j| \geq C'' n^2 \eta^2_n \log^{-1} n.$
Due to (\ref{etanew}), we obtain
$$|x_i-x_j| \geq C'' M^2 n^2 .$$ With a sufficient large $M$, the last inequality contradicts that $|x_i-x_j| \leq 2 \pi n^2$.
Therefore $I_2 =\emptyset$ and there exists some positive constant $C_0$ such that
\begin{equation} \label{}
\sum_{I}  \frac{(x_i-x_j)^2}{ |i-j|_o^4} \leq C_0 n \log^3 n.
\end{equation}

 Furthermore, denote $x_{n+i}=x_i,~i \geq 0$, then for $0 \leq j \leq n-1$, $$(x_{j+1}-x_j)^2 \leq C_0 n \log^{3} n.$$
Therefore, by the triangle inequality, we have 
$$ |x_i-x_j| \leq C_0 |i-j|_o n^{\frac{1}{2}} \log^{\frac{3}{2}} n.$$

 Finally, since $$\frac{2}{\pi} \leq \frac{\sin x}{x} \leq 1, ~~0 < x \leq \frac{\pi}{2},$$
we have 
\begin{equation} \label{}
\sum_{i \neq j} \frac{(x_i-x_j)^2}{n^4 \sin^4 \left( \frac{\pi(i-j)}{n} \right)} \leq C_0' n \log^{3} n.
\end{equation}

\end{proof}

\section{Estimate of the multivariate Gaussian distribution}
 Note that $G(x)$ defined in (\ref{G}) can be written in the quadratic form of $ -\frac{1}{2} x^{T} A x,$
where 
\begin{equation} \label{}
A_{i,j}=-\frac{3}{n^4 \sin^4 \left( \frac{\pi(i-j)}{n} \right)}+\frac{2}{n^4 \sin^2 \left( \frac{\pi(i-j)}{n} \right)}~~~(i \neq j),
\end{equation}
and, by the identities (\ref{formula1}) and (\ref{formula2}) in Lemma \ref{lemma}, 
\begin{equation} \label{}
A_{ii}=\frac{(n^2-1)(n^2+11)}{15n^4}-\frac{2n^2-2}{3n^4}=-\sum_{j \neq  i} A_{ij}.
\end{equation}
Since $A$ is not invertible, $p_g$ defined in (\ref{pg})-(\ref{Z_g}) can be viewed as a degenerate Gaussian distribution on the hyperplane $\Gamma$ defined in (\ref{Gamma}), $$p_g(x) =\frac{1}{Z_g} e^{-\frac{1}{2} x^{T} A x},$$ where $Z_g=\int_{\Gamma} e^{-\frac{1}{2} x^{T} A x} d x$.

 Next, we aim to explore the covariance structure of this Gaussian distribution. Note that $A$ is a circular matrix generated by the vector $(A_{0,0}, A_{0,1}, \cdots, A_{0,n-1})$. Therefore its normalized eigenvectors can be chosen as
\begin{equation} \label{eigenvector}
{v}_k=\frac{1}{\sqrt{n}} \left( \omega_k^0, \omega_k^1, \cdots, \omega_k^{n-1}   \right)~~(k=0,1, \cdots, n-1),
\end{equation}
where $\omega_k=e^{\frac{2 \pi i k}{n}}$. By using the identities (\ref{formula3}) and (\ref{formula4}) in Lemma \ref{lemma}, the corresponding eigenvalues are given by
$$\lambda_{k}=A_{0,0}+A_{0,1} \omega_k+A_{0,2} \omega^2_k+\cdots+A_{0,n-1} \omega_{k}^{n-1}$$
$$=\frac{(n^2-1)(n^2+11)}{15n^4}-\frac{2n^2-2}{3n^4}-\frac{3}{n^4} \sum_{j=1}^{n-1} \frac{\cos \frac{2 \pi j k}{n}}{\sin^4 \frac{j \pi}{n}}+\frac{2}{n^4} \sum_{j=1}^{n-1} \frac{\cos \frac{2 \pi j k}{n}}{\sin^2 \frac{j \pi}{n}}$$
\begin{equation} \label{eigenvalue}=\frac{6}{n^4} \sum_{j=1}^{n-1} \frac{\sin^2 \frac{ \pi j k}{n}}{\sin^4 \frac{j \pi}{n}}-\frac{4}{n^4} \sum_{j=1}^{n-1} \frac{\sin^2 \frac{ \pi j k}{n}}{\sin^2 \frac{j \pi}{n}}= \frac{2 k^2(n-k)^2}{n^4}, ~~0 \leq k \leq n-1.
\end{equation}

 Define 
\begin{equation} \label{u}
U=({v}_0^T, {v}^T_1, \cdots, {v}^T_{n-1}),
\end{equation}
 then we have $U^*U=1$ and $ A=U \Lambda U^*$, where $\Lambda$ is the diagonal matrix generated by $(\lambda_0,\lambda_1, \cdots, \lambda_{n-1})$. Let ${s}= U^* {x}$.  Then for $1 \leq k,j \leq n-1$,  
\begin{equation} \label{covofs}
\E_g s_k \overline{s_k}=\frac{1}{\lambda_k}= \frac{n^4}{2 k^2(n-k)^2}~~ \mbox{and}~~~ \E s_k \overline{s_j}=0~~(k \neq j).
\end{equation}

 Note that $s_0=0$ because of the definition of $\Gamma$ in (\ref{Gamma}). We also note that  
\begin{equation} \label{s1}
s_{j}=\overline{s_{n-j}} ~~\mbox{if}~~j >\frac{n-1}{2}.
\end{equation}
For simplicity, we assume that $n$ is odd. (the even case can be treated in a similar way). Then $(s_1, \cdots, s_{\frac{n-1}{2}})$ are $\frac{n-1}{2}$ independent complex Gaussian random variables. In particular, we can write 
\begin{equation} \label{s2}
s_k=\frac{n^2}{2 k(n-k)} X_k+i \frac{n^2}{2 k(n-k)} Y_k,~~~\mbox{for}~~1 \leq k \leq \frac{n-1}{2},
\end{equation}
 where $\{X_k\}$ and $\{Y_k\}$ are independent real standard Gaussian variables.\\

 Since $ x=U {s}$, we can compute the covariance structure for $x$,
\begin{equation} \label{covofx}
\E_g x_k \overline{x_j}=\frac{1}{n} \sum_{m=1}^{n-1}\frac{1}{\lambda_m} e^{\frac{2 \pi i}{n} m(k-j)}=\frac{1}{2n} \sum_{m=1}^{n-1} e^{\frac{2 \pi i}{n} m(k-j)} \frac{n^4}{m^2 (n-m)^2}.
\end{equation}

 In particular, 
 \begin{equation}\label{varofx}
 \V_g(x_k) \sim n.
 \end{equation}
 In addition,
$$x_j=\frac{1}{\sqrt{n}} \sum_{k=0}^{n-1} e^{\frac{2 \pi i j k}{n}} s_k=\frac{1}{\sqrt{n}}  \sum_{k=1}^{\frac{n-1}{2}} e^{\frac{2 \pi i j k}{n}} s_k+\frac{1}{\sqrt{n}}  \sum_{k=1}^{\frac{n-1}{2}} e^{-\frac{2 \pi i j k}{n}} \bar{s}_{k}=\frac{2}{\sqrt{n}}\sum_{k=1}^{\frac{n-1}{2}} \mathfrak{Re} \left( e^{\frac{2 \pi i j k}{n}} s_{k} \right)$$
\begin{equation}\label{C}
=\frac{2}{\sqrt{n}} \sum_{k=1}^{\frac{n-1}{2}} \left( \cos \left({\frac{2 \pi  j k}{n}}\right) \frac{n^2}{2 k(n-k)} X_k-\sin \left( {\frac{2 \pi  j k}{n}}\right) \frac{n^2}{2 k(n-k)} Y_k \right).
\end{equation}

For $0 \leq j \leq n-1$, $0 \leq l \leq n-1$, let 
\begin{equation} \label{xi}
\xi^{(l)}_j=x_{j+l}-x_j, ~\mbox{where}~~x_{n+i}=x_i,~i \geq 0.
\end{equation}
It is also useful for us to compute the covariance of $\xi^{(l)}_{j}$ and $\xi^{(l)}_{k}$.

\begin{proposition}\label{covofxi}
There exists some positive constant $C$ which is independent of other parameters such that
\begin{equation} \label{covofxi3}
|\E_g \xi^{(l)}_k {\bar{\xi}^{(l)}_j}| \leq  C \min \left\{ l,  \frac{ l^2}{|k-j|_o} \right\}.
\end{equation}
In particular, 
\begin{equation} \label{covofxi4}
\V_g (\xi^{(l)}_k ) \leq C l.
\end{equation}
\end{proposition}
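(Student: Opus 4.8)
The plan is to work directly from the explicit spectral representation of the Gaussian field. By the definition \eqref{xi} and the expansion \eqref{C}, we have
\[
\xi^{(l)}_j = x_{j+l}-x_j = \frac{1}{\sqrt n}\sum_{k=1}^{n-1} e^{\frac{2\pi i jk}{n}}\bigl(e^{\frac{2\pi i lk}{n}}-1\bigr)s_k,
\]
so using the orthogonality \eqref{covofs} and $\E_g s_k\overline{s_k}=1/\lambda_k=n^4/(2k^2(n-k)^2)$, we obtain
\[
\E_g \xi^{(l)}_k\overline{\xi^{(l)}_j}
=\frac{1}{n}\sum_{m=1}^{n-1} e^{\frac{2\pi i m(k-j)}{n}}\,\bigl|e^{\frac{2\pi i lm}{n}}-1\bigr|^2\,\frac{1}{\lambda_m}
=\frac{2}{n}\sum_{m=1}^{n-1} e^{\frac{2\pi i m(k-j)}{n}}\;\frac{n^4\sin^2\!\bigl(\frac{\pi l m}{n}\bigr)}{2\,m^2(n-m)^2}.
\]
The factor $(n-m)^2$ in the denominator is symmetric under $m\mapsto n-m$ and bounded below by a constant times $\min(m,n-m)^2$; since $\sin^2(\pi lm/n)$ is also symmetric under this reflection, it suffices to control $\frac{1}{n}\sum_{m=1}^{n-1} e^{\frac{2\pi i m(k-j)}{n}}\,\frac{n^4\sin^2(\pi lm/n)}{m_o^2(n-m_o)^2}$ up to constants, where $m_o=\min(m,n-m)$, and in fact only the contribution of small $m_o$ matters because of the $m_o^{-2}$ weight.

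First I would prove the bound $\V_g(\xi^{(l)}_k)\le Cl$, i.e. \eqref{covofxi4}. Taking $k=j$ above, the sum is real and nonnegative, and using $\sin^2(\pi lm/n)\le \min\{1,\,(\pi lm/n)^2\}$ together with $(n-m)^2\ge (n/2)^2$ for $1\le m\le n/2$ (and symmetrising), we get
\[
\V_g(\xi^{(l)}_k)\le C\sum_{m=1}^{\lfloor n/2\rfloor}\frac{\min\{1,(lm/n)^2\}}{m^2}
\le C\Bigl(\sum_{m\le n/l}\frac{l^2}{n^2}+\sum_{m>n/l}\frac{1}{m^2}\Bigr)\le C\Bigl(\frac{l^2}{n^2}\cdot\frac{n}{l}+\frac{l}{n}\Bigr)\le C\frac{l}{n}\cdot C',
\]
which — after tracking the constants coming from $1/\lambda_m=n^4/(2m^2(n-m)^2)$ — gives $O(l)$; the dominant piece is the range $m\le n/l$. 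For the cross term \eqref{covofxi3} I would give two separate estimates. The bound by $Cl$ follows from the $k=j$ case by Cauchy–Schwarz, $|\E_g\xi^{(l)}_k\overline{\xi^{(l)}_j}|\le \sqrt{\V_g(\xi^{(l)}_k)\V_g(\xi^{(l)}_j)}\le Cl$ using translation invariance. The bound by $Cl^2/|k-j|_o$ is the one requiring work: here I would exploit cancellation in the oscillatory sum $\sum_m e^{2\pi i m(k-j)/n} g(m)$ where $g(m)=n^4\sin^2(\pi lm/n)/(m^2(n-m)^2)$, via summation by parts (Abel summation) against the Dirichlet-type kernel $\sum_{m\le M} e^{2\pi i m(k-j)/n}$, whose partial sums are $O(1/\|(k-j)/n\|)=O(n/|k-j|_o)$. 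This converts the gain into $\frac1n\cdot\frac{n}{|k-j|_o}\cdot\mathrm{Var}(g)$, and the total variation of $g$ over $1\le m\le n/2$ is, by the same small-$m$ analysis as above, $O(l/n)$ times the normalising constant, yielding $O(l^2/|k-j|_o)$ after multiplying by the extra factor $l$ that appears because $\sin^2(\pi lm/n)$ has total variation of order $l$ on the relevant range. Taking the minimum of the two bounds gives \eqref{covofxi3}.

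The main obstacle is the summation-by-parts step for the off-diagonal bound: one must carefully split the sum at $m\sim n/l$, handle the near-symmetric singularity at $m=n$ (equivalently, treat $m$ and $n-m$ on the same footing so that the kernel estimate applies to each half), and bound the total variation of the weight $g(m)$ — which combines the monotone decay of $m^{-2}(n-m)^{-2}$ with the oscillation of $\sin^2(\pi lm/n)$ — uniformly in $l$ and in $|k-j|_o$. Once the bookkeeping of these constants is done, the two regimes $|k-j|_o\lesssim l$ and $|k-j|_o\gtrsim l$ fit together to give exactly $C\min\{l,\,l^2/|k-j|_o\}$, and \eqref{covofxi4} is the special case $k=j$.
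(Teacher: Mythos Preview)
Your approach coincides with the paper's: both derive the explicit spectral formula for the covariance, establish the $O(l)$ bound by a direct estimate (the paper via a Riemann-sum comparison of $\frac{2}{n}\sum_m \sin^2(\pi lm/n)/((m/n)^2(1-m/n)^2)$ to $l\int_0^\infty \sin^2(\pi x)/x^2\,dx$, you via the split at $m\sim n/l$ together with Cauchy--Schwarz for the off-diagonal case), and obtain the $O(l^2/|k-j|_o)$ bound by Abel summation against the geometric partial sums $\sum_{p\le m} e^{2\pi i(k-j)p/n}=O(n/|k-j|_o)$. The one place your sketch needs tightening is the total-variation estimate for the weight $g(m)=\sin^2(\pi lm/n)/((m/n)^2(1-m/n)^2)$: the clean statement you need is simply $\sum_m|g(m)-g(m+1)|=O(l^2)$ (which follows by splitting at $m\sim n/l$ and summing the peak heights $\sim l^2/k^2$ of the oscillations), and the paper obtains the equivalent via a pointwise derivative bound on $f(x)=\sin^2(\pi lx)/(x^2(1-x)^2)$.
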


\begin{proof}
 By (\ref{covofx}), we have
$$\E_g \xi^{(l)}_k {\bar{\xi}^{(l)}_j}=\E_g (x_{k+l}-x_k)(\bar{x}_{j+l}-\bar{x}_j)=\E_g x_{k+l} \bar{x}_{j+l}+\E_g x_{k} \bar{x}_{j}-\E_g x_{k+l}\bar{x}_{j}-\E_g x_{k} \bar{x}_{j+l}$$
\begin{equation} \label{covofxi1}
=\frac{1}{n} \sum_{m=1}^{n-1} \frac{1}{\lambda_m} e^{\frac{2 \pi i}{n} m(k-j)} \left( 2-e^{\frac{2 \pi i ml}{n}}-e^{-\frac{2 \pi i ml}{n}} \right)=\frac{2}{n} \sum_{m=1}^{n-1} \frac{\sin^2 \left( \frac{\pi m l}{n} \right)}{\left(\frac{m}{n}\right)^2 \left(1-\left(\frac{m}{n}\right) \right)^2} e^{ 2 \pi i (k-j)\frac{m}{n}} .
\end{equation}
For the RHS of (\ref{covofxi1}), we can find an upper bound as
\begin{equation} \label{covofxi2}
|\E_g \xi^{(l)}_k {\bar{\xi}^{(l)}_j}|\leq \frac{2}{n} \sum_{m=1}^{n-1} \frac{\sin^2 \left( \frac{\pi m l}{n} \right)}{\left(\frac{m}{n}\right)^2 \left(1-\left(\frac{m}{n}\right) \right)^2}=\frac{4}{n} \sum_{m=1}^{\frac{n-1}{2}} \frac{\sin^2 \left( \frac{\pi m l}{n} \right)}{\left(\frac{m}{n}\right)^2 \left(1-\left(\frac{m}{n}\right) \right)^2}  \leq \frac{16l}{3}  \sum_{m=1}^{\frac{n-1}{2}} \frac{\sin^2 \left( \frac{\pi m l}{n} \right)}{\left(\frac{lm}{n}\right)^2 } \cdot \frac{l}{n}.
\end{equation}

The RHS of (\ref{covofxi2}) can be viewed as a Riemann sum of the function $\frac{\sin^2 {\pi x}}{x^2}$ corresponding to the evenly-spaced partition over $[0, \frac{l}{2}]$ with the subintervals of length $\frac{l}{n-1}$. Since the function $\frac{\sin^2 {\pi x}}{x^2}$ can be bounded from above by a monotone function $m(x)$ defined as

\begin{equation*}
    m(x)=
   \begin{cases}
   \pi^2 &\mbox{if $0 \leq x \leq 1$ }\\
   \frac{1}{x^2} &\mbox{if $x>1$},
   \end{cases}
  \end{equation*} 
the RHS of (\ref{covofxi2}) can be bounded by a Riemann sum of $m(x)$. Note that $m(x)$ is monotone,  the error between its upper and lower Riemann sum is at most of the order $\frac{l}{n}$. Then there exists a universal constant $C>0$, such that $$|\E_g \xi^{(l)}_k {\bar{\xi}^{(l)}_j}| \leq \frac{16}{3} l \int_0^{\frac{l}{2}} m(x) dx +O\left(\frac{l}{n}\right) \leq \frac{16l}{3}  \int_0^{\infty} m(x) dx (1+o(1)) \leq Cl.$$

 For large $|k-j|_o$, the heavy oscillation of the exponential term leads to cancellations between terms in the expression (\ref{covofxi1}) for $\E_g \xi^{(l)}_k {\bar{\xi}^{(l)}_j}$. Thus the upper bound that we have obtained above is not sharp in this case. Let $a_m=\frac{\sin^2 \left( \frac{\pi m l}{n} \right)}{\left(\frac{m}{n}\right)^2 \left(1-\left(\frac{m}{n}\right) \right)^2} $, and $b_m=e^{ 2 \pi i (k-j)\frac{m}{n}} $. By summation by parts, we have
$$\sum_{m=1}^{n-1} a_m b_m=\sum_{m=1}^{n-2} (a_m-a_{m+1}) \left( \sum_{p=1}^{m} b_p\right)+a_{n-1} \sum_{p=1}^{n-1} b_p.$$
Then

$$|\E_g \xi^{(l)}_k {\bar{\xi}^{(l)}_j}| \leq \frac{2}{n} \sum_{m=1}^{n-2} |a_m-a_{m+1}| \left|\frac{1-e^{\frac{2 \pi i (k-j) m}{n}}}{1-e^{\frac{2 \pi i (k-j) }{n}}}\right| +O\left( \frac{l^2}{n} \right).$$
By differentiating the function $f(x)=\frac{\sin^2 \pi l x }{x^2 (1-x)^2}$, we find that the derivative is at most of the order $l^2$ and we have $$|a_m-a_{m+1}| =\left|f\left(\frac{m}{n}\right)-f\left(\frac{m+1}{n}\right)\right| \leq  \frac{C' l^2}{n} .$$
Using the inequality $$\left|\frac{1-e^{\frac{2 \pi i (k-j) m}{n}}}{1-e^{\frac{2 \pi i (k-j) }{n}}}\right| \leq \left|\frac{2}{1-e^{\frac{2 \pi i (k-j) }{n}}}\right| \leq \frac{C' n}{|k-j|_o},$$
we have $$|\E_g \xi^{(l)}_k {\bar{\xi}^{(l)}_j}| \leq \frac{C' l^2}{|k-j|_o}.$$
 This finishes the proof of the Proposition \ref{covofxi}.

\end{proof}

 Combining (\ref{covofxi4}) and (\ref{varofx}) with (\ref{Gamma_o}), we have
\begin{lemma}\label{ggamma}
There exist some positive constants $c_1$, $c_2$, such that
 \begin{equation} 
\Prob_g( \Gamma^c_D) \leq 2 e^{-c_1 n^3}+n^2 e^{-c_2 n \log^3 n}.
\end{equation}
\end{lemma}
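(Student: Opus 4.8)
The plan is to decompose $\Gamma_D^c$ into three events, one for each clause in the definition (\ref{Gamma_o}) of $\Gamma_D$, and to bound each by a Gaussian tail estimate together with a union bound. If $x\in\Gamma_D^c$, then at least one of the following holds: (i) $\max_{i\neq j}|x_i-x_j|/|i-j|_o > D n^{1/2}\log^{3/2}n$; (ii) $x_0 < -\pi n(n+1)$; (iii) $x_{n-1} > \pi n(n+1)$. Under $\Prob_g$ the vector $x$ is a centered (degenerate) Gaussian on $\Gamma$, so every coordinate $x_k$ and every cyclic increment $\xi^{(l)}_k = x_{k+l}-x_k$ is a centered real Gaussian, with $\V_g(x_k)\le C'n$ by (\ref{varofx}) and $\V_g(\xi^{(l)}_k)\le Cl$ by (\ref{covofxi4}).

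Events (ii) and (iii) are handled at once: from $\Prob(N(0,\sigma^2)>t)\le e^{-t^2/(2\sigma^2)}$ I would get
\[
\Prob_g\bigl(x_0<-\pi n(n+1)\bigr)\le \exp\!\left(-\frac{\pi^2 n^2(n+1)^2}{2C'n}\right)\le e^{-c_1 n^3},
\]
and likewise for $x_{n-1}$, which together contribute $2e^{-c_1 n^3}$. For (i), I would first observe that each pairwise difference can be written as a short increment: with the convention $x_{n+i}=x_i$ one has $x_i-x_j=\xi^{(l)}_j$ where $l=(i-j)\bmod n$, and since $x_i-x_j=-(x_j-x_i)$ one can always arrange $l=|i-j|_o\le\lfloor n/2\rfloor$. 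Hence event (i) is contained in $\bigcup_{l=1}^{\lfloor n/2\rfloor}\bigcup_{m=0}^{n-1}\{|\xi^{(l)}_m|>D\,l\,n^{1/2}\log^{3/2}n\}$, and by (\ref{covofxi4}) and the Gaussian tail bound,
\[
\Prob_g\bigl(|\xi^{(l)}_m|>D\,l\,n^{1/2}\log^{3/2}n\bigr)\le 2\exp\!\left(-\frac{D^2 l^2 n\log^3 n}{2Cl}\right)=2\exp\!\left(-\frac{D^2 l\,n\log^3 n}{2C}\right)\le 2\exp\!\left(-\frac{D^2}{2C}\,n\log^3 n\right),
\]
since $l\ge 1$. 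A union bound over the at most $n^2$ pairs $(l,m)$ then gives $\Prob_g(\text{(i)})\le n^2 e^{-c_2 n\log^3 n}$ for a suitable $c_2>0$, the harmless constant $2$ being absorbed into a slightly smaller exponent. Adding the three estimates yields $\Prob_g(\Gamma_D^c)\le 2e^{-c_1 n^3}+n^2 e^{-c_2 n\log^3 n}$.

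I do not expect a genuine obstacle here: the whole argument reduces to the variance bounds (\ref{varofx}) and (\ref{covofxi4}), which are already in hand, plus standard Gaussian concentration. The one place that needs a bit of care is the reduction in step (i) — realizing each $|x_i-x_j|$ as an increment $\xi^{(l)}_m$ with $l$ as small as $|i-j|_o$ (rather than $|i-j|$), which is exactly what keeps the exponent at order $n\log^3 n$ after the union bound — together with checking that the final constants $c_1,c_2$ are independent of $n$, which follows from the uniformity already recorded in Proposition \ref{covofxi} and (\ref{varofx}).
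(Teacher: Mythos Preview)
Your proposal is correct and follows exactly the approach the paper has in mind: the paper does not spell out a proof of Lemma~\ref{ggamma} but simply writes ``Combining (\ref{covofxi4}) and (\ref{varofx}) with (\ref{Gamma_o}), we have Lemma~\ref{ggamma}'', and the details you supply (Gaussian tail bounds for $x_0$, $x_{n-1}$ using $\V_g(x_k)\sim n$, and a union bound over the increments $\xi_m^{(l)}$ with $\V_g(\xi_m^{(l)})\le Cl$) are precisely what is used later in the paper, e.g.\ in the proof of Lemma~\ref{step1} and in (\ref{gcom1}). Your care in reducing each difference $x_i-x_j$ to an increment $\xi_m^{(l)}$ with $l=|i-j|_o$ is exactly the right observation.
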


\section{Proof of Proposition \ref{tech1} and Theorem \ref{tech2}} 
In this section, we prove Proposition \ref{tech1} and Theorem \ref{tech2}. We start with some preliminary details.
  If $x=(x_0, \cdots, x_{n-1}) \in \Gamma_D$, then by Lemma \ref{xinomega}, 
$$G(x)=\frac{1}{2} \sum_{i \neq j} \frac{-\frac{3}{2}+\sin^2 \left(\frac{\pi(i-j)}{n}\right)}{ n^4 \sin^4 \left( \frac{\pi(i-j)}{n}  \right)} (x_i-x_j)^2  \sim -\sum_{i>j} \frac{(x_i-x_j)^2}{ |i-j|_o^4} = O \left( -n \log^3 n \right),$$
 and 
\begin{equation} \label{B}
\frac{|x_i-x_j|}{|i-j|_o} =O\left(   n^{\frac{1}{2}} \log^{\frac{3}{2}} n \right).
\end{equation}
Then  $\frac{ (x_i-x_j)}{2n^2} =O \left( \frac{\log^{\frac{3}{2}} n}{ n^{\frac{3}{2}}} \right)$ is negligible, and thus
$$F(x) = \frac{1}{6} \left [\sum_{i \neq j} \frac{3 \cos \left( \frac{\pi(i-j)}{n} +\delta \frac{x_i-x_j}{2n^2} \right)}{ n^6 \sin^5 \left( \frac{\pi(i-j)}{n} +\delta \frac{x_i-x_j}{2n^2} \right)}-\frac{\cos \left( \frac{\pi(i-j)}{n} +\delta \frac{x_i-x_j}{2n^2} \right)}{ n^6 \sin^3 \left( \frac{\pi(i-j)}{n} +\delta \frac{x_i-x_j}{2n^2} \right)} \right ] (x_i-x_j)^3$$
\begin{equation} \label{F1}
\sim \sum_{i \neq j} \left [\frac{(x_i-x_j)^3}{2 n^6 \sin^5 \left( \frac{\pi(i-j)}{n}  \right)}+\frac{(x_i-x_j)^3 }{ 6 n^6 \sin^3 \left( \frac{\pi(i-j)}{n}\right)} \right] \sim  \sum_{i>j} \frac{(x_i-x_j)^3}{n|i-j|_o^5}.
\end{equation}
Comparing $F(x)$ with $G(x)$ and by (\ref{B}), we have 
\begin{equation} \label{FandG}
|F(x)|=|G(x)| O\left(  \max_{i \neq j} \frac{|x_i-x_j|}{n |i-j|_o} \right)=|G(x)| O\left( \frac{\log^{\frac{3}{2}} n}{n^{\frac{1}{2}}} \right) =O \left( n^{\frac{1}{2}} \log^{\frac{9}{2}} n \right).
\end{equation}
In this section, we show that $F(x) = o_n(1)$  with probability $1-o_n(1)$. To be more specific, by (\ref{F1}), we show that 
\begin{equation}\label{goal}
F(x)  \sim \sum_{l=1}^{n-1} \frac{1}{l^5} \left( \frac{1}{n} \sum_{j=0}^{n-l-1} {\xi_j^{(l)}}^3 \right) =o_n(1),
\end{equation}
where $\xi_j^{(l)}$ is defined in (\ref{xi}).\\

We divide the proof of (\ref{goal}) into three parts. The first step is to show that the normalized constant $\tilde{Z}_n$ defined in (\ref{tZ_n}) is not far from $Z_g$ defined in (\ref{Z_g}). This implies that the probability distribution of $x$ is not far from the Gaussian distribution $p_g(x)$ defined in (\ref{pg})-(\ref{Z_g}). The second step is to show that under the Gaussian distribution, $F(x)=o_n(1)$ with probability $1-O(n^{-c})$. The last step is to combine the first two steps and obtain that $F(x) =o_n(1)$ with probability $1-O(n^{-c})$ under the distribution defined in (\ref{px})-(\ref{tZ_n}).

\subsection{Step 1: Comparing $Z_g$ and $\tilde{Z_n}$}
 For reader's convenience, recall the definition of the normalized constants $\tilde{Z}_n$ and $Z_g$.
$${\tilde{Z}_n}=\int_{\Gamma_D} e^{G(x)+F(x)} dx;~~~Z_g=\int_{\Gamma} e^{G(x)} dx.$$
 We start with a lemma. Rescale the Gaussian distribution defined in (\ref{pg}) and define two new Gaussian distributions,
\begin{equation} \label{}
p_{g^{+}} (x) =\frac{1}{Z_g^{+}} e^{G(x)(1+\frac{cB(n)}{n})},~~~~~p_{g^{-}} (x) =\frac{1}{Z_g^{-}} e^{G(x)(1-\frac{cB(n)}{n})}.
\end{equation}

\begin{lemma}\label{scale}
If $B(n) \ll n$, the normalized constants $Z_g^{\pm}$ satisfy
$$Z^{\pm}_g = Z_g e^{\mp\frac{c}{2} B(n)(1+o_n(1))}.$$

\end{lemma}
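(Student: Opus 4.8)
The plan is to observe that $p_{g^{\pm}}$ are nothing but the degenerate Gaussian $p_g$ with its quadratic form rescaled, so that $Z_g^{\pm}$ differs from $Z_g$ only through the Jacobian of a dilation of the $(n-1)$-dimensional hyperplane $\Gamma$. Recall from the previous section that $G(x)=-\tfrac12 x^{T}Ax$, where $A$ is positive definite on $\Gamma$ (its nonzero eigenvalues being $\lambda_k = 2k^2(n-k)^2/n^4>0$), so $Z_g$, $Z_g^{+}$ and $Z_g^{-}$ all converge as Gaussian integrals over $\Gamma$. Put $\epsilon_{\pm}=\pm\, cB(n)/n$; since $B(n)\ll n$ we have $\epsilon_{\pm}\to 0$, and in particular $1+\epsilon_{-}>0$ for $n$ large, so $G(x)(1+\epsilon_{\pm})=-\tfrac12(1+\epsilon_{\pm})\,x^{T}Ax$ is again a negative-definite quadratic form on $\Gamma$.

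First I would carry out the linear change of variables $y=\sqrt{1+\epsilon_{\pm}}\,x$ on $\Gamma$. Because $\Gamma$ is a linear subspace of dimension $n-1$ that is invariant under scalar multiplication, Lebesgue measure on it transforms by $dx=(1+\epsilon_{\pm})^{-(n-1)/2}\,dy$, whence
\[
Z_g^{\pm}=\int_{\Gamma} e^{-\frac12(1+\epsilon_{\pm})\,x^{T}Ax}\,dx
=(1+\epsilon_{\pm})^{-(n-1)/2}\int_{\Gamma} e^{-\frac12\,y^{T}Ay}\,dy
=(1+\epsilon_{\pm})^{-(n-1)/2}\,Z_g .
\]
Then I would expand the logarithm: writing $(1+\epsilon_{\pm})^{-(n-1)/2}=\exp\!\big(-\tfrac{n-1}{2}\log(1+\epsilon_{\pm})\big)$ and using $\log(1+\epsilon_{\pm})=\epsilon_{\pm}\big(1+O(\epsilon_{\pm})\big)=\pm\tfrac{cB(n)}{n}\big(1+o_n(1)\big)$ together with $\tfrac{n-1}{n}=1+o_n(1)$, one obtains $-\tfrac{n-1}{2}\log(1+\epsilon_{\pm})=\mp\tfrac{c}{2}B(n)\big(1+o_n(1)\big)$, i.e. $Z_g^{\pm}=Z_g\,e^{\mp\frac{c}{2}B(n)(1+o_n(1))}$, which is the claim.

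The argument is elementary; the only point requiring attention is the dimension count, namely that the dilation Jacobian is $(1+\epsilon_{\pm})^{-(n-1)/2}$ and not $(1+\epsilon_{\pm})^{-n/2}$, since $p_g$ lives on the hyperplane $\Gamma$. I also need to check that the $o_n(1)$ genuinely absorbs all error: the $\tfrac{n-1}{n}$ factor contributes a relative error $O(1/n)$, and the quadratic term in the logarithm contributes an absolute error $O(n\cdot\epsilon_{\pm}^2)=O(B(n)^2/n)=o(B(n))$, which is exactly where the hypothesis $B(n)\ll n$ is used. There is no real obstacle here beyond this bookkeeping — the lemma is a routine Gaussian-rescaling computation.
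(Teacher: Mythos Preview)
Your argument is correct and is essentially the same rescaling computation as in the paper: change variables $y=\sqrt{1+\epsilon_\pm}\,x$ on the hyperplane, read off the Jacobian, and expand the logarithm using $B(n)\ll n$. You are in fact slightly more careful than the paper in writing the Jacobian exponent as $(n-1)/2$ rather than $n/2$, but the discrepancy is of course absorbed in the $o_n(1)$.
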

\begin{proof}

 Change the variable to $\tilde{x}= \sqrt{1-\frac{cB(n)}{n}} x$,  then
 $$Z^{-}_g= \int_{\Gamma} e^{-\frac{1}{2} x^T A x \left( 1- \frac{c B(n)}{n} \right)} dx=\int_{\Gamma} e^{-\frac{1}{2} \tilde{x}^T A \tilde{x}} \prod_{j=1}^{n}  \frac{1}{\sqrt{1-\frac{c B(n)}{n}}} d \tilde{x}=Z_g \prod_{j=1}^{n}  \frac{1}{\sqrt{1-\frac{c B(n)}{n}}}.$$
Note that $$\left ( 1- \frac{c B(n)}{n} \right)^{-\frac{n}{2}} = e^{\frac{c B(n) }{2}(1+o_n(1))} .$$
Similarly, we have $$\left ( 1+ \frac{c B(n)}{n} \right)^{-\frac{n}{2}} = e^{\frac{-c B(n) }{2}(1+o_n(1))} .$$
Thus $$Z^{\pm}_g = Z_g e^{\mp\frac{c}{2} B(n)(1+o_n(1))}.$$

\end{proof}

 Let $B(n)=Dn^{\frac{1}{2}} \log^{\frac{3}{2}} n$. By the definition of $\Gamma_D$ in (\ref{Gamma_o}), we have $ \max_{i \neq j} \frac{|x_i-x_j|}{|i-j|_o} \leq  B(n)$. Then by the first equation of (\ref{FandG}), there exists a universal constant $c>0$, such that 
$$\frac{c B(n)}{n} G(x) \leq F(x) \leq - \frac{c B(n)}{n} G(x).$$
Similar to Lemma \ref{scale}, one can show that 
$$ Z_g e^{-c B(n)} \leq \tilde{Z}_n \leq Z_g e^{c B(n)}.$$
When we proceed to the Step 2 and Step 3 presented below, this estimate will not be sufficient for us to show $F(x) =o_n(1)$ because the exponential term $e^{c B(n)}$ grows faster than any polynomial. In order to get a better upper bound of  $$\max_{i \neq j} \frac{|x_i-x_j|}{|i-j|_o},$$ we use iteration. We need the following two lemmas.

\begin{lemma}\label{step1}
For some sufficient large $ M>0$, let $  M \log^{\frac{1}{2}} n \leq  B \leq D n^{\frac{1}{2}} \log^{\frac{3}{2}} n$. If there exists some $\gamma>0$ such that
\begin{equation} \label{}
\Prob_x \left(x \in \Gamma_D : \max_{i \neq j} \frac{|x_i-x_j|}{|i-j|_o} \geq  B \right) \leq n^{-\gamma}, 
\end{equation}
 then there exists a universal constant $c>0$, we have
\begin{equation} \label{}
Z_g e^{-c B} \leq \tilde{Z}_n \leq Z_g e^{c B}.
\end{equation}
\end{lemma}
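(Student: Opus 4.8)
The plan is to mimic the argument already sketched for the crude bound $Z_g e^{-cB(n)} \le \tilde{Z}_n \le Z_g e^{cB(n)}$ with $B(n) = Dn^{1/2}\log^{3/2}n$, but now feeding in the sharper hypothesis $\max_{i\neq j}|x_i - x_j|/|i-j|_o \le B$ on a set of overwhelming probability. First I would split the domain of integration defining $\tilde{Z}_n = \int_{\Gamma_D} e^{G(x)+F(x)}\,dx$ into the ``good'' region $\Gamma_B \defeq \{x \in \Gamma_D : \max_{i\neq j}|x_i-x_j|/|i-j|_o \le B\}$ and its complement inside $\Gamma_D$. On $\Gamma_B$, the first equality in (\ref{FandG}), namely $|F(x)| = |G(x)|\,O\!\left(\max_{i\neq j}\frac{|x_i-x_j|}{n|i-j|_o}\right)$, gives the two-sided bound $\frac{cB}{n}G(x) \le F(x) \le -\frac{cB}{n}G(x)$ (recall $G(x) \le 0$), for a universal constant $c$. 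Hence on $\Gamma_B$,
\begin{equation}\label{eq:sandwich}
e^{G(x)\left(1+\frac{cB}{n}\right)} \le e^{G(x)+F(x)} \le e^{G(x)\left(1-\frac{cB}{n}\right)},
\end{equation}
so that $\int_{\Gamma_B} e^{G(x)+F(x)}\,dx$ is sandwiched between $\int_{\Gamma_B} e^{G(x)(1+cB/n)}\,dx \ge$ (something close to) $Z_g^{+}$ from below, and $\int_{\Gamma} e^{G(x)(1-cB/n)}\,dx = Z_g^{-}$ from above. Applying \lref{scale} with $B(n) = B$ (legitimate since $B \ll n$ by the hypothesis $B \le Dn^{1/2}\log^{3/2}n$), the outer Gaussian integrals are $Z_g e^{\mp\frac{c}{2}B(1+o_n(1))}$, which already yields the desired two-sided bound up to the constant in the exponent.

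The remaining issue is to control the contributions coming from (i) the region $\Gamma_D \setminus \Gamma_B$ in the integral for $\tilde{Z}_n$, and (ii) the region $\Gamma \setminus \Gamma_B$ in the lower-bounding Gaussian integral, and to confirm these are negligible compared to $Z_g e^{-cB}$. For (ii), by \lref{ggamma} and a variance computation as in \pref{covofxi} (or the refined version behind $\Gamma_D$), $\Prob_g(\Gamma_B^c)$ decays like $e^{-c' B^2}$ or faster, so $\int_{\Gamma \setminus \Gamma_B} e^{G(x)(1+cB/n)}\,dx \le Z_g^{+}\,\Prob_{g^+}(\Gamma_B^c)$ is super-exponentially small, hence does not disturb the lower bound $\tilde{Z}_n \ge Z_g e^{-cB}$. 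For (i), on $\Gamma_D \setminus \Gamma_B$ we still have the crude bound $F(x) = O(n^{1/2}\log^{9/2}n)$ from (\ref{FandG}) and $G(x) = O(-n\log^3 n)$, so $e^{G(x)+F(x)} \le e^{O(n^{1/2}\log^{9/2}n)}$; multiplying by the Lebesgue measure of $\Gamma_D \setminus \Gamma_B$ (polynomially bounded volume, or just by comparing with $\int_{\Gamma_D\setminus\Gamma_B} e^{G(x)}\,dx$ which by the hypothesis $\Prob_x(\cdots) \le n^{-\gamma}$ and $p(x) \asymp e^{G(x)+F(x)}/\tilde Z_n$ is a small fraction) shows this piece is at most $n^{-\gamma}\tilde{Z}_n e^{O(n^{1/2}\log^{9/2}n)}$; care is needed here because $e^{O(n^{1/2}\log^{9/2}n)}$ beats $n^{-\gamma}$, so instead I would express things directly in terms of $\Prob_x$: writing $\int_{\Gamma_D\setminus\Gamma_B} e^{G+F}\,dx = \tilde Z_n\,\Prob_x(\Gamma_D\setminus\Gamma_B) \le \tilde Z_n n^{-\gamma}$, which is a genuinely small fraction of $\tilde Z_n$, so the good region $\Gamma_B$ already captures $(1-n^{-\gamma})\tilde Z_n$ and the sandwich (\ref{eq:sandwich}) applied there controls $\tilde Z_n(1 - n^{-\gamma})$, hence $\tilde Z_n$ itself, between the two Gaussian integrals.

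Putting it together: from (\ref{eq:sandwich}) restricted to $\Gamma_B$ we get $\int_{\Gamma_B} e^{G+F}\,dx \ge \int_{\Gamma_B} e^{G(x)(1+cB/n)}\,dx = Z_g^{+} - \int_{\Gamma\setminus\Gamma_B} e^{G(x)(1+cB/n)}\,dx = Z_g e^{-\frac{c}{2}B(1+o_n(1))} - (\text{super-exp.\ small})$, so $\tilde Z_n \ge \int_{\Gamma_B} e^{G+F}\,dx \ge Z_g e^{-cB}$ after absorbing constants and the $o_n(1)$ into a slightly larger $c$. For the upper bound, $\tilde Z_n = \int_{\Gamma_B} e^{G+F}\,dx + \int_{\Gamma_D\setminus\Gamma_B} e^{G+F}\,dx$; the first term is at most $\int_{\Gamma} e^{G(x)(1-cB/n)}\,dx = Z_g^{-} = Z_g e^{\frac{c}{2}B(1+o_n(1))}$, and the second term is at most $n^{-\gamma}\tilde Z_n$, so $\tilde Z_n(1 - n^{-\gamma}) \le Z_g e^{\frac{c}{2}B(1+o_n(1))}$, giving $\tilde Z_n \le Z_g e^{cB}$ for large $n$. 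I expect the main obstacle to be the bookkeeping in (i): one must avoid the temptation to bound $e^{G+F}$ pointwise by $e^{O(n^{1/2}\log^{9/2}n)}$ on the bad set (which would be fatal against only a $n^{-\gamma}$ probability), and instead argue self-referentially through $\Prob_x$, using that $\int_{\Gamma_D\setminus\Gamma_B}e^{G+F} = \tilde Z_n\Prob_x(\Gamma_D\setminus\Gamma_B)$ is automatically a $\le n^{-\gamma}$ fraction of $\tilde Z_n$ — this is the one delicate point; everything else is a routine application of \lref{scale}, \lref{ggamma}, and the estimate (\ref{FandG}).
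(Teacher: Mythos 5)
Your proposal is correct and follows essentially the same route as the paper: decompose $\Gamma_D$ into the good set $A=\{x\in\Gamma_D:\max_{i\neq j}|x_i-x_j|/|i-j|_o\le B\}$ and its complement, use the sandwich $\frac{cB}{n}G\le F\le-\frac{cB}{n}G$ on $A$ together with the rescaling Lemma~\ref{scale}, handle the bad set in $\tilde Z_n$ via the self-referential identity $\int_{\Gamma_D\setminus A}e^{G+F}=\tilde Z_n\,\Prob_x(\Gamma_D\setminus A)\le n^{-\gamma}\tilde Z_n$, and control the Gaussian mass of $\Gamma\setminus A$ by the tail bound $n^2e^{-c'B^2}$, which is dominated by $e^{-cB/2}$ precisely because $B\ge M\log^{1/2}n$. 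The one delicate point you flag (avoiding the pointwise bound $e^{O(n^{1/2}\log^{9/2}n)}$ on the bad set) is exactly how the paper argues.
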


\begin{proof}
Denote $A=\{ x \in \Gamma_D : \max_{i \neq j} \frac{|x_i-x_j|}{|i-j|_o} \leq  B\}$ and then $\Prob_x(\Gamma_D \setminus A) \leq n^{-\gamma}$. Because of (\ref{FandG}), if $x \in A$, there exists   a universal constant $c>0$, 
\begin{equation}\label{D}
\frac{c B}{n} G(x) \leq F(x) \leq -\frac{c B}{n} G(x).
\end{equation}
By (\ref{D}), we have
$$\tilde{Z}_n=\int_{\Gamma_D} e^{G(x)+F(x)} dx=\int_{A} e^{G(x)+F(x)} dx+\Prob_x \left( \Gamma_D \setminus A \right) \tilde{Z}_n \leq \int_{A} e^{G(x)(1-\frac{cB}{n})} dx+n^{-\gamma} \tilde{Z}_n.$$
Then
$$(1-n^{-\gamma}) \tilde{Z}_n \leq \int_{\Gamma} e^{G(x)(1-\frac{cB}{n})} dx \leq e^{\frac{cB}{2} (1+o_n(1))} Z_g.$$
Thus 
\begin{equation} \label{lower}
\tilde{Z}_n \leq e^{\frac{cB}{2}(1+o_n(1))} Z_g(1+O(n^{-\gamma})) \leq e^{cB} Z_g.
\end{equation}
For the lower bound, similarly, 
$$\tilde{Z}_n \geq \int_{A} e^{G(x)+F(x)}dx \geq \int_{A} e^{G(x)(1+\frac{c B}{n})} dx=\int_{\Gamma} e^{G(x)(1+ \frac{cB}{n})} dx-\int_{\Gamma \setminus A} e^{G(x)(1+ \frac{c B}{n})} dx $$
$$\geq \int_{\Gamma} e^{G(x)(1+ \frac{cB}{n})} dx- \int_{\Gamma \setminus A} e^{G(x)}dx =\left(e^{-\frac{cB}{2}(1+o_n(1))} -\Prob_g(\Gamma \setminus A)\right) Z_g. $$

 By Lemma \ref{ggamma}, we have 
$$\Prob_g(\Gamma \setminus A) \leq \Prob_g(\Gamma_D^c)+\Prob_g \left ( \bigcup_{i \neq j}    \frac{|x_i-x_j|}{|i-j|_o} \geq  B     \right) \leq  2 e^{-c_1 n^3} +n^2 e^{-c_2 n \log^3 n}+  n^2 \max_{j,l}  \Prob_g \left( {\frac{|\xi_j^{(l)}|}{\sqrt{l}}}   \geq  B_{} \right).$$
Since the variance of $\xi_{j}^{(j)}$ is at most of the order $l$ by Proposition \ref{covofxi}, we have
\begin{equation} \label{c'}
\Prob_g \left( {\frac{|\xi_j^{(l)}|}{\sqrt{l}}}   \geq  B_{} \right) \leq e^{-c' B^2},
\end{equation}
where $c'$ is a universal positive constant.
Thus,
$$\Prob_g(\Gamma \setminus A) \leq 2 e^{-c_1 n^3} +n^2 e^{-c_2 n \log^3 n}+  n^2 e^{-c' B^2}   ,$$
and
$$\tilde{Z}_n \geq (e^{-\frac{cB}{2}(1+o_n(1))}-n^2 e^{-c' B^2}- 2 e^{-c_1 n^3}-n^2 e^{-c_2 n \log^3 n}) Z_g.$$
If $B \geq M \log^{\frac{1}{2}} n$ with some sufficient large $M>0$, then $n^2 e^{-c' B^2}+ 2 e^{-c_1 n^3}+n^2 e^{-c_2 n \log^3 n}$ is much smaller than $e^{-\frac{cB}{2}}$. We have
\begin{equation} \label{upper}
 \tilde{Z}_n \geq e^{-\frac{cB}{2}(1+o_n(1))} Z_g(1-o_n(1))  \geq Z_g e^{-cB} .
\end{equation}

\end{proof}

 Using the result of Lemma \ref{step1}, we can prove 
\begin{lemma}\label{step2}
For some sufficient large $M>0$, let $  M \log^{\frac{1}{2}} n \leq  B_k \leq D n^{\frac{1}{2}} \log^{\frac{3}{2}} n$. If there exists some $\gamma>0$ such that $k n^{-10} \leq n^{-\gamma}$ and
\begin{equation} \label{initial}
\Prob_x \left( x \in \Gamma_D: \max_{i \neq j} \frac{|x_i-x_j|}{|i-j|_o} \geq  B_k \right) \leq  k n^{-10},
\end{equation}
then
\begin{equation} \label{}
\Prob_x \left(x \in \Gamma_D: \max_{i \neq j} \frac{|x_i-x_j|}{|i-j|_o} \geq  B_{k+1} \right) \leq (k+1) n^{-10},
\end{equation}
with 
\begin{equation} \label{}
B_{k+1}=\sqrt{\frac{4c B_k+24 \log n}{c'}}.
\end{equation}
Here $c,c'$ are universal constants that do not depend on $n,k$.
\end{lemma}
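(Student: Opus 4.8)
The plan is to carry out one bootstrap step. First I would feed the hypothesised tail bound at level $B_k$ into the comparison of the normalising constants $\tilde Z_n$ and $Z_g$ via \lref{step1}; then, on the event that $\max_{i\ne j}|x_i-x_j|/|i-j|_o\le B_k$, I would replace the weight $e^{G(x)+F(x)}$ by the honest Gaussian weight $e^{G(x)(1-cB_k/n)}$ and apply a Gaussian tail estimate for the increments $\xi^{(l)}_j$. The purpose of the precise relation $c'B_{k+1}^2=4cB_k+24\log n$ is to leave a polynomial margin that absorbs both the factor $e^{cB_k}$ coming from the partition-function comparison and the factor $n^2$ lost in a union bound over pairs.

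In detail: \lref{step1} applies with $B=B_k$, because $M\log^{1/2}n\le B_k\le Dn^{1/2}\log^{3/2}n$ and, by assumption, $\Prob_x(x\in\Gamma_D:\max_{i\ne j}|x_i-x_j|/|i-j|_o\ge B_k)\le kn^{-10}\le n^{-\gamma}$; it yields $Z_ge^{-cB_k}\le\tilde Z_n\le Z_ge^{cB_k}$. Set $\mathcal B_k=\{x\in\Gamma_D:\max_{i\ne j}|x_i-x_j|/|i-j|_o\ge B_k\}$, so $\Prob_x(\mathcal B_k)\le kn^{-10}$, and split
\[
\{x\in\Gamma_D:\ \max_{i\ne j}|x_i-x_j|/|i-j|_o\ge B_{k+1}\}\ \subseteq\ \mathcal B_k\ \cup\ \bigl((\Gamma_D\setminus\mathcal B_k)\cap\{\max_{i\ne j}|x_i-x_j|/|i-j|_o\ge B_{k+1}\}\bigr).
\]
On $\Gamma_D\setminus\mathcal B_k$ one has $\max_{i\ne j}|x_i-x_j|/|i-j|_o<B_k$, hence $G(x)+F(x)\le G(x)(1-cB_k/n)$ by (\ref{FandG})--(\ref{D}); and, exactly as in the proof of \lref{step1}, any pair realising $\{\max\ge B_{k+1}\}$ yields some increment with $|\xi^{(l)}_j|/\sqrt l\ge B_{k+1}$, so a union bound over the at most $n^2$ pairs reduces everything to bounding $\Prob_x\bigl((\Gamma_D\setminus\mathcal B_k)\cap\{|\xi^{(l)}_j|/\sqrt l\ge B_{k+1}\}\bigr)$ for a fixed $j,l$.

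For that quantity I would use (\ref{px}) to write it as $\tfrac{1+o_n(1)}{\tilde Z_n}\int e^{G+F}$ over the set in question, bound $e^{G+F}\le e^{G(x)(1-cB_k/n)}$, enlarge the integration domain to all of $\{x\in\Gamma:|\xi^{(l)}_j|/\sqrt l\ge B_{k+1}\}$, and recognise the result as $(1+o_n(1))\tfrac{Z_g^{-}}{\tilde Z_n}\,\Prob_{g^{-}}(|\xi^{(l)}_j|/\sqrt l\ge B_{k+1})$. By \lref{scale} with $B(n)=B_k\ll n$, $Z_g^{-}=Z_ge^{\frac c2 B_k(1+o_n(1))}$, so $Z_g^{-}/\tilde Z_n\le e^{2cB_k}$ for $n$ large; under $p_{g^{-}}$ the increment $\xi^{(l)}_j$ is centred Gaussian with variance $(1-cB_k/n)^{-1}\V_g(\xi^{(l)}_j)\le 2Cl$ by (\ref{covofxi4}), whence $\Prob_{g^{-}}(|\xi^{(l)}_j|/\sqrt l\ge B_{k+1})\le 2e^{-c'B_{k+1}^2}$ for a universal $c'$, as in (\ref{c'}). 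Thus each term is $O(1)\,e^{2cB_k-c'B_{k+1}^2}=O(1)\,e^{-2cB_k-24\log n}\le n^{-23}$ for $n$ large; multiplying by $n^2$ and adding $\Prob_x(\mathcal B_k)\le kn^{-10}$ gives $\Prob_x(x\in\Gamma_D:\max_{i\ne j}|x_i-x_j|/|i-j|_o\ge B_{k+1})\le kn^{-10}+n^{-21}\le(k+1)n^{-10}$.

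The main difficulty is entirely one of bookkeeping rather than of ideas: the naive bound $\tilde Z_n\le Z_ge^{cB}$ at the scale $B=Dn^{1/2}\log^{3/2}n$ is worthless because it grows faster than any power of $n$, so the whole mechanism relies on being able to invoke \lref{step1} at the much smaller scale $B_k$ supplied by the previous iteration; and one must peel off $\mathcal B_k$ \emph{before} doing the union bound (otherwise $n^2$ copies of the crude bound $\Prob_x(\mathcal B_k)\le kn^{-10}$ would swamp the estimate) and retain the additive $24\log n$ in the definition of $B_{k+1}$ so that the surviving weight $e^{-24\log n}=n^{-24}$ comfortably beats the union-bound factor $n^2$. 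Beyond this, only \lref{step1}, \lref{scale}, the increment variance bound (\ref{covofxi4}), and a standard Gaussian tail estimate are used.
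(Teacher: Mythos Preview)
Your proof is correct and follows essentially the same approach as the paper's: peel off the bad set at level $B_k$, invoke \lref{step1} for the ratio $Z_g/\tilde Z_n$, replace $e^{G+F}$ by $e^{G(1-cB_k/n)}$ on the remainder, pass to the (rescaled) Gaussian measure, and conclude via a union bound over pairs together with the tail estimate (\ref{c'}). The only cosmetic difference is that the paper performs the union bound on the Gaussian side after an explicit change of variables $\tilde x=\sqrt{1-cB_k/n}\,x$, whereas you take the union bound first under $\Prob_x$ and phrase the comparison through $\Prob_{g^{-}}$; the resulting arithmetic matches.
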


\begin{proof} 
Denote $A_k=\{x \in \Gamma_D: \max_{i \neq j} \frac{|x_i-x_j|}{|i-j|_o} \leq  B_k\}$ and $A_{k}^{com} =\Gamma_D \setminus A_{k}$. \\
 Then $\Prob_x( A^{com}_k) \leq k n^{-10}$ by (\ref{initial}).
Since $B_{k+1} \leq B_{k}$, then $A_{k+1} \subset A_{k} \subset \Gamma_D$ and $  A^{com}_{k} \subset  A^{com}_{k+1}$. By Lemma \ref{step1}, if $x \in A_k$, then there exists a universal constant $c>0$, such that
$$\frac{Z_g}{\tilde{Z}_n} \leq e^{c B_k}.$$
Thus,
$$\Prob_x \left(A^{com}_{k+1}\right) =   \Prob_x(A_k^{com}) +\Prob_x(A_{k} \cap A_{k+1}^{com}) \leq k n^{-10} +\frac{1}{\tilde{Z}_n} \int_{A_{k+1}^{com}} e^{G(x)(1- \frac{c B_k}{n})} dx$$
\begin{equation}\label{J}
= k n^{-10}+ \frac{Z_g}{\tilde{Z}_n} \frac{1}{Z_g}  \int_{A_{k+1}^{com}} e^{G(x)(1- \frac{c B_k}{n})}dx \leq k n^{-10}+e^{cB_k} \left(\frac{1}{Z_g}  \int_{A_{k+1}^{com}} e^{G(x)(1- \frac{c B_k}{n})} dx \right) .
\end{equation}
Let $\tilde{x}=\sqrt{1-\frac{cB_k}{n}} x$. Then
$$ \frac{1}{Z_g}  \int_{A_{k+1}^{com}} e^{G(x)(1- \frac{c B_k}{n})} dx = \left( 1-\frac{cB_k}{n} \right)^{-\frac{n}{2}} \frac{1}{Z_g} \int_{\tilde{A}_{k+1}^{com}} e^{G(\tilde{x})} d \tilde{x} $$
\begin{equation}\label{K}
\leq e^{c B_k}  \frac{1}{Z_g} \int_{\tilde{A}_{k+1}^{com}} e^{G(\tilde{x})}  d \tilde{x}=e^{c B_k} \Prob_g(\tilde{A}^{com}_{k+1}),
\end{equation}
where ${\tilde{A}_{k+1}^{com}}=\left\{ x \in \Gamma_D:   \max_{i \neq j} \frac{|\tilde{x}_i-\tilde{x}_j|}{|i-j|_o} >B_{k+1} \sqrt{1-\frac{cB_k}{n}}  \right \} $.\\

 Note that $$\Prob_g(\tilde{A}_{k+1}^{com})  \leq \Prob_g \left( \max_{i \neq j} \frac{|x_i-x_j|}{|i-j|_o} \geq  B_{k+1} \sqrt{1-\frac{cB_k}{n}} \right) \leq n^2 \max_{j,l}  \Prob_g \left( {|\xi_j^{(l)}|}   \geq  B_{k+1} l \sqrt{1-\frac{cB_k}{n}} \right) $$
$$\leq n^2 \max_{j,l}  \Prob_g \left( {\frac{|\xi_j^{(l)}|}{\sqrt{l}}}   \geq  B_{k+1} \sqrt{1-\frac{cB_k}{n}} \right) \leq n^2 e^{-c' B_{k+1}^2 ({1-\frac{cB_k}{n}})},$$
where $c'>0$ is the universal constant introduced in (\ref{c'}). 
Thus
\begin{equation}\label{L}
\mbox{LHS of } (\ref{K})  \leq n^2 e^{-c' B_{k+1}^2(1-\frac{cB_k}{n})+{c B_k}} \leq n^2 e^{-\frac{c'}{2} B_{k+1}^2+c B_k} = e^{-\frac{c'}{2} B_{k+1}^2+c B_k +2 \log n}.
\end{equation}
Therefore, combining (\ref{J}), (\ref{K}), and (\ref{L}), we have
\begin{equation} \label{rhs}
\Prob_x \left(A^{com}_{k+1} \right)  \leq  k n^{-10}+e^{-\frac{c'}{2} B_{k+1}^2+2{c B_k}+2 \log n}.
\end{equation}
By letting the RHS of (\ref{rhs}) equal $(k+1) n^{-10}$,  we can solve $-c' B^2_{k+1}+4c B_k+4 \log n  = -20 \log n$ for $B_{k+1}$. We obtain $$ B_{k+1}=\sqrt{\frac{4c B_k+24 \log n}{c'}}.$$

\end{proof}

 Combining Lemma \ref{step1} and \ref{step2}, we have

\begin{proposition} \label{iteration}
There exist some constants $C_1, C_2>0$, such that for sufficient large $n$,
\begin{equation} \label{}
\Prob_x \left(  x \in \Gamma_D: \max_{i \neq j} \frac{|x_i-x_j|}{|i-j|_o} \geq  C_1 \log^{\frac{1}{2}}n  \right) \leq n^{-9},
\end{equation}
and
\begin{equation} \label{c1}
Z_g e^{-C_2 \log^{\frac{1}{2}} n} \leq \tilde{Z}_n \leq Z_g e^{{C_2} \log^{\frac{1}{2}} n}.
\end{equation}
\end{proposition}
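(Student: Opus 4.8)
The plan is to prove both claims of \pref{iteration} by a bootstrap loop that couples \lref{step1} and \lref{step2}. The only input is the crude bound that holds on $\Gamma_D$ by construction: from the definition of $\Gamma_D$ in (\ref{Gamma_o}) we have $\max_{i\neq j}|x_i-x_j|/|i-j|_o\le B_1:=Dn^{\frac{1}{2}}\log^{\frac{3}{2}}n$ for every $x\in\Gamma_D$, so the base case
$$\Prob_x\bigl(x\in\Gamma_D:\ \max_{i\neq j}\tfrac{|x_i-x_j|}{|i-j|_o}\ge B_1\bigr)\le 1\cdot n^{-10}$$
holds trivially (the probability being in fact $0$). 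Feeding this into \lref{step2} and iterating produces thresholds $B_{k+1}=\sqrt{(4cB_k+24\log n)/c'}$ together with the bounds $\Prob_x(x\in\Gamma_D:\ \max_{i\neq j}|x_i-x_j|/|i-j|_o\ge B_k)\le k\,n^{-10}$, valid as long as each $B_k$ stays in the window $[M\log^{\frac{1}{2}}n,\,Dn^{\frac{1}{2}}\log^{\frac{3}{2}}n]$ required by \lref{step1} and \lref{step2} and $k\,n^{-10}\le n^{-\gamma}$ for some $\gamma>0$.

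The iteration map $h(B)=\sqrt{(4cB+24\log n)/c'}$ is increasing and has a unique positive fixed point $B_\star$ solving $c'B_\star^2=4cB_\star+24\log n$, whence $\sqrt{24/c'}\,\log^{\frac{1}{2}}n\le B_\star\le(\sqrt{24/c'}+1)\log^{\frac{1}{2}}n$ for $n$ large, and $h(B)<B$ for all $B>B_\star$. Since $B_1\gg B_\star$, the sequence $(B_k)$ decreases monotonically towards $B_\star$ and so never leaves $[B_\star,B_1]\subseteq[M\log^{\frac{1}{2}}n,\,Dn^{\frac{1}{2}}\log^{\frac{3}{2}}n]$, provided $M$ is chosen $\le\sqrt{24/c'}$ --- which is compatible with $M$ being large enough for \lref{step1} because $\sqrt{2/c'}<\sqrt{24/c'}$. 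For the iteration count: while $B_k\ge 6c^{-1}\log n$ one has $4cB_k+24\log n\le 8cB_k$, hence $B_{k+1}\le\sqrt{8c/c'}\,B_k^{1/2}$, which unrolls to $B_k\le(8c/c')\,B_1^{2^{-(k-1)}}$; as $\log B_1=\tfrac{1}{2}\log n+O(\log\log n)$, this falls below $6c^{-1}\log n$ by the index $k_1:=\lceil\log_2\log n\rceil$, and one further application of $h$ gives $B_{k_0}\le C_1\log^{\frac{1}{2}}n$, where $k_0:=k_1+1=O(\log\log n)$ and $C_1:=\sqrt{48/c'}$ (enlarged if necessary so that $C_1\ge M$).

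Since $k_0\,n^{-10}\le n^{-9}<1$ for $n$ large, the hypotheses of \lref{step2} hold at every step $k=1,\dots,k_0-1$, so the iteration gives $\Prob_x(x\in\Gamma_D:\ \max_{i\neq j}|x_i-x_j|/|i-j|_o\ge B_{k_0})\le k_0 n^{-10}$; as $B_{k_0}\le C_1\log^{\frac{1}{2}}n$, the event with threshold $C_1\log^{\frac{1}{2}}n$ is contained in the latter one, so
$$\Prob_x\bigl(x\in\Gamma_D:\ \max_{i\neq j}\tfrac{|x_i-x_j|}{|i-j|_o}\ge C_1\log^{\frac{1}{2}}n\bigr)\le k_0 n^{-10}\le n^{-9},$$
which is the first assertion. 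For the second, apply \lref{step1} with $B=C_1\log^{\frac{1}{2}}n$ and $\gamma=9$ --- legitimate since $M\log^{\frac{1}{2}}n\le C_1\log^{\frac{1}{2}}n\le Dn^{\frac{1}{2}}\log^{\frac{3}{2}}n$ --- to obtain $Z_g e^{-cC_1\log^{\frac{1}{2}}n}\le\tilde{Z}_n\le Z_g e^{cC_1\log^{\frac{1}{2}}n}$, i.e. (\ref{c1}) with $C_2:=cC_1$.

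I expect the main obstacle to be the bookkeeping in the two middle steps: one must verify simultaneously that the square-root contraction of $h$ brings the threshold down to the scale $\log^{\frac{1}{2}}n$ within only $O(\log\log n)$ iterations --- so that the accumulated failure probability $k_0 n^{-10}$ stays below $n^{-9}$ --- and that the sequence $(B_k)$ never exits the window $[M\log^{\frac{1}{2}}n,\,Dn^{\frac{1}{2}}\log^{\frac{3}{2}}n]$ on which \lref{step1} and \lref{step2} were established. Everything beyond that is a direct quotation of \lref{step1} and \lref{step2}.
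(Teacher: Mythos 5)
Your proposal is correct and follows essentially the same route as the paper: initialize the iteration at $B_0=Dn^{\frac{1}{2}}\log^{\frac{3}{2}}n$ from the definition of $\Gamma_D$, run the contraction $B_{k+1}=\sqrt{(4cB_k+24\log n)/c'}$ of \lref{step2} for $O(\log\log n)$ steps down to the fixed point scale $\log^{\frac{1}{2}}n$, and then invoke \lref{step1} at the terminal threshold to get (\ref{c1}). Your handling of the admissible window (the sequence decreases monotonically to the fixed point $B_\star\ge\sqrt{24/c'}\log^{\frac{1}{2}}n$, so it never drops below $M\log^{\frac{1}{2}}n$) is a slightly cleaner version of the paper's "stop the iteration right before $B_k$ falls below $M\log^{\frac{1}{2}}n$", but it is the same argument.
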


\begin{proof}
 The definition of $\Gamma_D$ in (\ref{Gamma_o}) indicates that for $k=0$, (\ref{initial}) is satisfied with $B_0=D n^{\frac{1}{2}} \log^{\frac{3}{2}} n$. Then we use Lemma \ref{step1} and Lemma \ref{step2} to proceed the iteration by setting
\begin{equation} \label{iter}
B_{k+1}=\sqrt{\frac{4c B_k+24 \log n}{c'}}.
\end{equation}
Note that in Lemma \ref{step2}, $c, c'$ are universal constants.\\
  The fixed point of the iteration (\ref{iter}) is 
$$-c' B^2_{f}+4c B_f+4 \log n  = -20 \log n ~~~\Longrightarrow~~~ B_{f}=\frac{2c+ \sqrt{4c^2+24c' \log n}}{c'} \sim \log^{\frac{1}{2}} n.$$

 Recall that $B_0 \sim n^{\frac{1}{2}} \log^{\frac{2}{3}} n$. Moreover, if $B_k \geq C \log n$, then $B_{k+1} \sim \sqrt{B_k}$. This implies that for $B_k$ to reach the value of order $\log^{\frac{1}{2}} n$, one needs about $\log \log n$ iteration steps. In other words, the sequence $\{B_k\}$ will starts from $B_0=D n^{\frac{1}{2}} \log^{\frac{3}{2}} n$ and after $C_1' \log \log n$ number of iterations, $B_k$ decreases below the level $$B_\infty=C_1 \log^{\frac{1}{2}}n.$$

 Finally, we still need to check if the conditions of Lemma \ref{step1}, \ref{step2} are satisfied. To satisfy the first condition of Lemma \ref{step2} (also Lemma \ref{step1}), i.e. $M \log^{\frac{1}{2}} n \leq  B_k \leq D n^{\frac{1}{2}} \log^{\frac{3}{2}} n$, we need to modify the stopping time of the iteration process. We will end the iteration right before $B_k$ falls below $M \log^{\frac{1}{2}}n$. But the result remains the same. Note that the number of iteration steps is of the order $\log\log n$ , so the second condition of Lemma \ref{step2} also holds, i.e. $k n^{-10} \leq n^{-\gamma}$ for some $\gamma>0$.

 Therefore, we can find a subset, denoted as 
\begin{equation} \label{A}
A_{\infty}=\left\{ x \in \Gamma_D: \max_{i \neq j} \frac{|x_i-x_j|}{|i-j|_o} \leq  C_1 \log^{\frac{1}{2}}n \right\},
\end{equation}
such that
\begin{equation} \label{com12}
\Prob_x(\Gamma_D \setminus A_{\infty}) \leq n^{-9}.
\end{equation}
Moreover, by Lemma \ref{step1}, there exists some constant $C_2>0$, such that
$$Z_g e^{-C_2 \log^{\frac{1}{2}} n} \leq \tilde{Z}_n \leq Z_g e^{{C_2} \log^{\frac{1}{2}} n}.$$
\end{proof} 

 Combining (\ref{com12}) and (\ref{xgamma}), we have
\begin{corollary}\label{A_infty}
There exists a subset $A_{\infty}$ defined in (\ref{A}) such that
\begin{equation}\label{com1}
\Prob_x( A^c_{\infty}) \leq n^{-8}.
\end{equation}
\end{corollary}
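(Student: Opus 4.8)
The plan is to obtain Corollary~\ref{A_infty} by a one-line union bound, since all the substantive work has already been done: the hard estimate $\Prob_x(\Gamma_D \setminus A_\infty) \leq n^{-9}$ is established in Proposition~\ref{iteration}, and the superpolynomially small bound on $\Prob_x(\Gamma_D^c)$ comes from Lemma~\ref{omega} together with Lemma~\ref{xinomega}. First I would record that, viewing everything as subsets of the hyperplane $\Gamma$, the definitions (\ref{Gamma_o}) and (\ref{A}) give the inclusions $A_\infty \subset \Gamma_D \subset \Gamma$, so that the complement splits as
\begin{equation*}
A_\infty^c = (\Gamma_D \setminus A_\infty) \cup \Gamma_D^c .
\end{equation*}

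Next I would bound the two pieces separately and add them. The first piece is controlled by (\ref{com12}), i.e.\ $\Prob_x(\Gamma_D \setminus A_\infty) \leq n^{-9}$, which is precisely the output of the bootstrap iteration (\ref{iter}) run in the proof of Proposition~\ref{iteration}. The second piece is controlled by (\ref{xgamma}), i.e.\ $\Prob_x(\Gamma_D^c) \leq n^{-cn}$ for some $c>0$. Subadditivity of $\Prob_x$ then yields
\begin{equation*}
\Prob_x(A_\infty^c) \leq n^{-9} + n^{-cn},
\end{equation*}
and since $n^{-cn}$ decays faster than any polynomial, the right-hand side is at most $n^{-8}$ for all sufficiently large $n$, as claimed.

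There is essentially no obstacle in this final step; it is pure bookkeeping, with the loss of one unit in the polynomial exponent (from $9$ to $8$) leaving ample room to absorb the term $n^{-cn}$. The genuine difficulty in this part of the argument was concentrated earlier, in Step~1: running the iteration from the crude bound $B_0 \sim n^{1/2}\log^{3/2} n$ down to the near-optimal scale $C_1 \log^{1/2} n$ in $O(\log\log n)$ rounds, while maintaining at each round the two-sided comparison between $\tilde Z_n$ and $Z_g$ supplied by Lemma~\ref{step1}. Once Proposition~\ref{iteration} is granted, the corollary follows immediately.
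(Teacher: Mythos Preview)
Your proof is correct and matches the paper's approach exactly: the paper simply states that the corollary follows by ``Combining (\ref{com12}) and (\ref{xgamma})'', and you have spelled out precisely that union bound. There is nothing to add.
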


\subsection{Step 2: Estimate of $F(x)$ under Gaussian distribution}

 If $x \in A_{\infty}$, which is defined in (\ref{A}),  then 
\begin{equation}\label{x12}
\max_{i \neq j} \frac{|x_i-x_j|}{|i-j|_o} \leq C_1 \log^{\frac{1}{2}}n,
\end{equation}
and by (\ref{F1}), we obtain that
\begin{equation} \label{gaussiantail4}
F(x) = O \left( \sum_{i > j} \frac{(x_i-x_j)^3}{n |i-j|_o^5} \right) =O \left( \log^{\frac{3}{2}} n \right).
\end{equation}
Note that 
\begin{equation} \label{gaussiantail2}
 \sum_{i > j, |i-j|_o > \log^3 n} \frac{|x_i-x_j|^3}{n |i-j|_o^5} \leq C \log^{\frac{3}{2}} n \sum_{ l > \log^3 n}  \frac{1}{ l^2} \leq C \log^{-\frac{3}{2}} n=o_n(1).
\end{equation}
Thus, it is sufficient for us to estimate
\begin{equation} \label{gaussiantail}
 \sum_{i > j; |i-j|_o \leq \log^3 n} \frac{(x_i-x_j)^3}{n |i-j|_o^5}= \sum_{k=1}^{\log^3 n} \frac{1}{l^5} \left( \frac{1}{n} \sum_{j=0}^{n-1} {\xi_j^{(l)} }^3\right),
\end{equation}
where $\xi^{(l)}_j$ is defined in (\ref{xi}). Denote 
\begin{equation}\label{omega_l}
{\Omega}_l=\left\{ \left| \frac{1}{n} \sum_{i=1}^{n-1} {\xi_i^{(l)}}^3 \right| \leq  n^{-\frac{1}{4}} \right\}, ~~1 \leq  l \leq \log^3 n,
\end{equation}
 and ${\Omega_{\infty}}=\bigcap_{l \leq \log^3 n} {\Omega}_l $. Note that if $x \in \Omega' \defeq \Omega_{\infty} \cap A_{\infty}$, then \begin{equation} \label{gaussiantail3}
 \sum_{l=1}^{\log^3 n} \frac{1}{l^5} \left| \frac{1}{n} \sum_{j=0}^{n-1} {\xi_j^{(l)} }^3\right| \leq  n^{-\frac{1}{4}} \sum_{l=1}^{\log^3 n} \frac{1}{l^5} =O( n^{-\frac{1}{4}}).
\end{equation} 

 Combining (\ref{gaussiantail}), (\ref{gaussiantail3}), and (\ref{gaussiantail2}), we have
\begin{equation} \label{tail}
F(x) =O( \log^{-\frac{3}{2}}n).
\end{equation}

 Using Proposition \ref{covofxi} and Lemma \ref{ggamma}, one can show that 
$$\Prob_g(A^c_{\infty}) \leq \Prob_g \left(  \max_{i \neq j} \frac{|x_i-x_j|}{|i-j|_o} \geq  C_1 \log^{\frac{1}{2}}n \right)+\Prob_g\left( \Gamma_D^c \right)$$
\begin{equation}\label{gcom1}
\leq n^2 e^{-C'_1 l  \log n }+2e^{-c_1 n^3}+n^2 e^{-c_2 n \log^3 n} \leq n^{-\frac{1}{4}},
\end{equation}
provided that $C_1$ and $n$ are chosen sufficiently large.

 Next, we want to show that $\Prob_g(\Omega_{\infty}^c)=o_n(1)$. The following lemma is useful.
\begin{lemma} \label{highmoment}
There exists some constant $C>0$ such that
\begin{equation} \label{}
\Prob_g\left(  \left|  \frac{1}{n} \sum_{i=1}^{n-1} {\xi_i^{(l)}}^3\right| > n^{-\frac{1}{4}} \right) \leq \frac{C l^4 \log n  }{n^{\frac{1}{2}}}.
\end{equation}
\end{lemma}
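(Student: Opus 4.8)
The plan is to apply Chebyshev's inequality to $S_l \defeq \frac{1}{n}\sum_{i=1}^{n-1}(\xi_i^{(l)})^3$. Under $\Prob_g$ the family $(\xi_i^{(l)})_{0 \le i \le n-1}$ is a centered Gaussian vector --- indeed a stationary circular sequence, by (\ref{xi}) and (\ref{covofx}) --- so every odd moment of a single coordinate vanishes and hence $\E_g S_l = 0$. It therefore suffices to show that $\E_g S_l^2 = O(l^4 \log n / n)$: since the threshold in the statement is $n^{-1/4}$, Chebyshev's inequality then yields
\begin{equation*}
\Prob_g\!\left( \left| \frac{1}{n}\sum_{i=1}^{n-1}(\xi_i^{(l)})^3 \right| > n^{-\frac14} \right) \le \frac{\E_g S_l^2}{(n^{-1/4})^2} = n^{1/2}\,\E_g S_l^2 = O\!\left( \frac{l^4 \log n}{n^{1/2}} \right),
\end{equation*}
which is exactly the claimed bound.

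To estimate $\E_g S_l^2 = \frac{1}{n^2}\sum_{i,j} \E_g (\xi_i^{(l)})^3 (\xi_j^{(l)})^3$ I would use Wick's theorem. For centered jointly Gaussian $U, V$ with $\sigma_U^2 = \E U^2$, $\sigma_V^2 = \E V^2$ and $\rho = \E UV$, grouping the $15$ perfect matchings of the six factors $U,U,U,V,V,V$ according to the number of mixed $U$--$V$ pairs (which can only be $1$ or $3$) gives $\E\,U^3 V^3 = 9\,\sigma_U^2 \sigma_V^2 \rho + 6\rho^3$. Applying this with $U = \xi_i^{(l)}$, $V = \xi_j^{(l)}$ and inserting $\sigma_U^2, \sigma_V^2 \le Cl$ from (\ref{covofxi4}) together with $|\rho| = |\E_g \xi_i^{(l)} \bar{\xi}_j^{(l)}| \le C\min\{l,\, l^2/|i-j|_o\}$ from (\ref{covofxi3}), one checks --- splitting into the regimes $|i-j|_o \le l$ and $|i-j|_o > l$, in each of which the $\rho^3$ term is dominated by the first term --- that
\begin{equation*}
\left| \E_g (\xi_i^{(l)})^3 (\xi_j^{(l)})^3 \right| \le C \min\!\left\{ l^3,\ \frac{l^4}{|i-j|_o} \right\}.
\end{equation*}

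It then remains to carry out the double sum, using the circular stationarity so that the inner sum does not depend on $i$. For each fixed $i$, the $O(l)$ indices $j$ with $|i-j|_o \le l$ contribute $O(l^4)$, while those with $l < |i-j|_o \le n/2$ contribute $\sum_{l < m \le n/2} l^4/m = O(l^4 \log n)$; hence $\sum_j |\E_g (\xi_i^{(l)})^3 (\xi_j^{(l)})^3| = O(l^4 \log n)$ uniformly in $i$, and summing over $i$ gives $\E_g S_l^2 \le \frac{1}{n^2} \cdot n \cdot C l^4 \log n = C l^4 \log n / n$, which completes the argument.

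The computation is largely mechanical; the two places that need a little attention are the Wick expansion of $\E\,U^3 V^3$ and the two-regime splitting of the final sum that produces the logarithmic factor. (A higher even moment of $S_l$ could be used to sharpen the tail, but the second moment is all that is needed in the applications below.)
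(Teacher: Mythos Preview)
Your proposal is correct and follows essentially the same route as the paper: apply Chebyshev via the second moment of $S_l$, expand $\E_g(\xi_i^{(l)})^3(\xi_j^{(l)})^3$ by Wick's formula as $9\sigma_i^2\sigma_j^2\rho_{ij}+6\rho_{ij}^3$, feed in the covariance bounds of Proposition~\ref{covofxi}, and sum over $i,j$ with the near/far splitting $|i-j|_o\le l$ versus $|i-j|_o>l$ to pick up the $\log n$. Your write-up is in fact a bit cleaner than the paper's (you make explicit that $\E_g S_l=0$ and state the uniform bound $\min\{l^3,\,l^4/|i-j|_o\}$ before summing), but the mathematics is identical.
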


\begin{proof}
By Wick's formula, we have 
$$\E_g {\xi_j^{(l)}}^3 {\xi_k^{(l)}}^3=9 \left( \E_g {\xi_j^{(l)}}^2  \E_g{ \xi_k^{(l)}}^2 \E_g \xi_j^{(l)} \xi_k^{(l)} \right)+6 \left( \E_g \xi_j^{(l)} \xi_k^{(l)} \right)^3 .$$
By Proposition \ref{covofxi}, $\E_g{ \xi_k^{(l)}}^2 \leq C l$ and $ \E_g{ \xi_k^{(l)}} \E_g \xi_j^{(l)} \leq \frac{Cl^2}{|k-j|_o}$ for $|j-k|_o \geq l$. Then when $|j-k|_o \geq l$, 
$$\E_g {\xi_j^{(l)}}^3 {\xi_k^{(l)}}^3 \leq 9 C^3 \frac{l^4}{|j-k|_o} +6 C^3 \frac{l^6}{|j-k|_o^3} \leq  \frac{C' l^4}{|j-k|_o}.$$
Similarly,  if $|j-k|_o \leq l$, then $ \E_g{ \xi_k^{(l)}} \E_g \xi_j^{(l)} \leq C l$, and thus $\E_g {\xi_j^{(l)}}^3 {\xi_k^{(l)}}^3 \leq 15 C^3 l^3 \leq \frac{C' l^4}{|j-k|_o}.$
Therefore, there exists some constant $C'>0$ such that
$$\E_g \left( \sum_{i=1}^{n-1} {\xi_i^{(l)}}^3 \right)^2=n \E_g {\xi_k^{(l)}}^6+4 \sum_{ j \neq k} \E_g {\xi_k^{(l)}}^3 {\xi_j^{(l)}}^3  \leq C' l^4 n \log n.$$

 It follows from the Markov Inequality that

\begin{equation}\label{argument}
\Prob_g\left(  \left| \frac{1}{n} \sum_{i=1}^{n-1} {\xi_i^{(l)}}^3 \right|> n^{-\frac{1}{4}} \right) \leq \frac{\E_g \left( \sum_{i=1}^{n-1} {\xi^{(l)}}^3 \right)^2}{n^{\frac{3}{2}}}  \leq \frac{C' l^4 \log n  }{n^{\frac{1}{2}}}.
\end{equation}
\end{proof}

 By using Lemma \ref{highmoment}, it can be shown directly that, for sufficiently large $n$, we have
\begin{equation} \label{gcom2}
\Prob_g \left(  \Omega_{\infty}^c  \right) \leq \Prob_g \left( \bigcup_{l \leq \log^3 n} \Omega_{l}^c  \right)  \leq 
 \frac{C' l^4 \log^3 n  }{n^{\frac{1}{2}}} \leq \frac{C ' \log^{15} n  }{n^{\frac{1}{2}}} \leq n^{-\frac{1}{4}}.
\end{equation}
Let $x \in \Omega'=\Omega_{\infty} \cap A_{\infty}$. Combining (\ref{gcom1}) and (\ref{gcom2}), we have
\begin{equation} \label{gcom}
\Prob_g(\Omega'^c) \leq \Prob_g(\Omega_{\infty}^c) + \Prob_g(A_{\infty}^c) \leq 2 n^{-\frac{1}{4}}.
\end{equation}

 Therefore, we have the following lemma.
\begin{lemma}
There exists a subset of $\Omega' \subset \Gamma$
such that $$\Prob_g(\Omega'^c)=o_n(1),$$
and if $x \in \Omega'$, then $F(x)=o_n(1)$.

\end{lemma}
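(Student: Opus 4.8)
The plan is to take $\Omega' = \Omega_\infty \cap A_\infty$, the set already singled out in the discussion preceding the statement, and to verify the two required properties by assembling the estimates built up in Step~1 and Step~2. The choice is the natural one: the set $A_\infty$ from (\ref{A}) enforces the near-Lipschitz bound $\max_{i\neq j}|x_i-x_j|/|i-j|_o \leq C_1\log^{1/2}n$, which is precisely the improvement over the crude $\Gamma_D$-bound $n^{1/2}\log^{3/2}n$ that is needed to tame the ``tail'' of $F$, while $\Omega_\infty = \bigcap_{l\leq \log^3 n}\Omega_l$ from (\ref{omega_l}) supplies the cancellation in the normalized cubic sums $n^{-1}\sum_j(\xi_j^{(l)})^3$ for the finitely many shifts $l$ that contribute to the ``bulk'' of $F$.

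For the probability bound I would write $\Prob_g(\Omega'^c)\leq \Prob_g(A_\infty^c)+\Prob_g(\Omega_\infty^c)$ and treat the two pieces separately. For $\Prob_g(A_\infty^c)$: since each $\xi_j^{(l)}$ is a centered Gaussian with $\V_g(\xi_j^{(l)})=O(l)$ by Proposition~\ref{covofxi}, standard Gaussian tail bounds give $\Prob_g(|\xi_j^{(l)}|/\sqrt{l}\geq C_1\log^{1/2}n)\leq e^{-C_1' l\log n}$, and a union bound over the $O(n^2)$ pairs $(i,j)$ together with Lemma~\ref{ggamma} for $\Gamma_D^c$ yields (\ref{gcom1}), i.e. $\Prob_g(A_\infty^c)\leq n^{-1/4}$. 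For $\Prob_g(\Omega_\infty^c)$: Lemma~\ref{highmoment} bounds $\Prob_g(|n^{-1}\sum_i(\xi_i^{(l)})^3|>n^{-1/4})$ by $Cl^4\log n/n^{1/2}$, so a union bound over $l\leq\log^3 n$ gives (\ref{gcom2}), i.e. $\Prob_g(\Omega_\infty^c)\leq C\log^{15}n/n^{1/2}\leq n^{-1/4}$. Adding the two, $\Prob_g(\Omega'^c)\leq 2n^{-1/4}=o_n(1)$, which is exactly (\ref{gcom}).

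For the deterministic estimate, fix $x\in\Omega'$ and recall from (\ref{F1}) that $F(x)\sim\sum_{i>j}(x_i-x_j)^3/(n|i-j|_o^5)$. I would split the sum at $|i-j|_o=\log^3 n$. On the tail $|i-j|_o>\log^3 n$, membership in $A_\infty$ gives $|x_i-x_j|^3\leq C|i-j|_o^3\log^{3/2}n$, so each term is $O(\log^{3/2}n/(n|i-j|_o^2))$; summing over the $n$ choices of $j$ and over $l=|i-j|_o>\log^3 n$ produces $O(\log^{3/2}n\sum_{l>\log^3 n}l^{-2})=O(\log^{-3/2}n)$, which is (\ref{gaussiantail2}). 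On the bulk $|i-j|_o\leq\log^3 n$, rewrite the sum as $\sum_{l\leq\log^3 n}l^{-5}\bigl(n^{-1}\sum_j(\xi_j^{(l)})^3\bigr)$ and use membership in $\Omega_\infty$, i.e. $|n^{-1}\sum_j(\xi_j^{(l)})^3|\leq n^{-1/4}$, to bound it by $n^{-1/4}\sum_{l\geq 1}l^{-5}=O(n^{-1/4})$, which is (\ref{gaussiantail3}). Combining the two pieces gives $F(x)=O(\log^{-3/2}n)=o_n(1)$, i.e. (\ref{tail}).

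The genuinely hard work sits upstream of this lemma and I would simply quote it. Establishing that $A_\infty$ --- with the sharp $\log^{1/2}n$ bound rather than the $n^{1/2}\log^{3/2}n$ bound of $\Gamma_D$ --- carries almost all of the Gaussian mass requires the bootstrap of Lemmas~\ref{step1} and \ref{step2} and Proposition~\ref{iteration}, since on $\Gamma_D$ alone $F(x)$ is only known to be $O(n^{1/2}\log^{9/2}n)$ by (\ref{FandG}), which is far from $o_n(1)$; and extracting cancellation in the cubic sums rests on the second-moment computation of Lemma~\ref{highmoment}, which in turn exploits the off-diagonal decay $|\E_g\xi_k^{(l)}\bar\xi_j^{(l)}|\leq Cl^2/|k-j|_o$ from Proposition~\ref{covofxi}. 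Given those inputs, the present lemma is the bookkeeping step that packages them, and the only point requiring a little care is choosing the split threshold $\log^3 n$ so that the tail contribution ($O(\log^{-3/2}n)$) and the bulk contribution ($O(n^{-1/4})$) are simultaneously $o_n(1)$.
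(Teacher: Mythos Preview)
Your proposal is correct and follows the paper's approach essentially verbatim: define $\Omega'=\Omega_\infty\cap A_\infty$, bound $\Prob_g(A_\infty^c)$ and $\Prob_g(\Omega_\infty^c)$ via (\ref{gcom1}) and (\ref{gcom2}) respectively, and split the sum for $F(x)$ at $|i-j|_o=\log^3 n$ to obtain (\ref{gaussiantail2}), (\ref{gaussiantail3}) and (\ref{tail}). One small inaccuracy in your closing commentary: showing that $A_\infty$ carries almost all of the \emph{Gaussian} mass $\Prob_g$ does not require the bootstrap of Lemmas~\ref{step1}--\ref{step2} and Proposition~\ref{iteration}; direct Gaussian tail bounds suffice, exactly as you use them --- the bootstrap is needed only for $\Prob_x(A_\infty^c)$, which enters in Step~3.
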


\subsection{Step 3:  Combining Step 1 and Step 2}

In this subsection, we finish the proofs of Proposition \ref{tech1} and Theorem \ref{tech2} by combining the results in Step 1 and Step 2. In Step 1, we have showed that $\Prob_x(A^c_{\infty})=o_n(1)$. In Step 2, we have obtained that $\Prob_g(\Omega^c_{\infty})=o_n(1)$ and $\Prob_g(A^c_{\infty})=o_n(1)$.

\begin{proof} (Proposition \ref{tech1} and Theorem \ref{tech2})

   We start by showing $\Prob_x \left( {\Omega}_{\infty}^c  \right)=o_n(1).$ Recall that ${\Omega}_l=\left\{ \left| \frac{1}{n} \sum_{i=1}^{n-1} {\xi_i^{(l)}}^3 \right| \leq  n^{-\frac{1}{4}} \right\} $,  $1 \leq l \leq \log^3 n$, and ${\Omega_{\infty}}=\bigcap_{l \leq \log^3 n} {\Omega}_l $. 
Then
$$\Prob_x \left( {\Omega}_l^c  \right) \leq \Prob_x(\Gamma_D^c)+\Prob_x \left( A_{\infty}^c  \right)+\Prob_x \left(  {\Omega}^c_l \cap A_{\infty} \cap \Gamma_D  \right).$$
By (\ref{xgamma}) and (\ref{com1}), 
\begin{equation}\label{}
 \Prob_x \left( {\Omega}_l^c  \right) \leq n^{-cn}+n^{-8}+\frac{1}{\tilde{Z}_n} \int_{{\Omega}_l^c} e^{G(x)(1-\frac{C_1 \log^{\frac{1}{2}}}{n})} dx \leq 2 n^{-8}+\frac{Z_g}{\tilde{Z}_n} \frac{1}{Z_g} \int_{{\Omega}_l^c} e^{G(x)(1-\frac{C_1 \log^{\frac{1}{2}}}{n})} dx.
\end{equation}
Changing the variable $\tilde{x}=\sqrt{1-\frac{C_1 \log^{\frac{1}{2}}n }{n}} x$, by Lemma \ref{scale}, we have
\begin{equation}\label{b1}
\Prob_x \left( {\Omega}_l^c  \right)  \leq 2 n^{-8}+ e^{C_1 \log^{\frac{1}{2}} n}   \cdot \Prob_g (\tilde{\Omega}_l^c) ,
\end{equation}
where $\tilde{\Omega}^c_l=\left\{ \left| \frac{1}{n} \sum_{i=1}^{n-1} \tilde{\xi_i^{(l)}}^3 \right| \geq  n^{-\frac{1}{4}} \left({1-\frac{C_1 \log^{\frac{1}{2}}n }{n}}\right)^{\frac{3}{2}}\right\}.$\\
Using the Markov inequality and following the arguments in (\ref{argument}) in Lemma \ref{highmoment},  we have 
\begin{equation} \label{a1}
\Prob_g (\tilde{\Omega}_l^c)=\frac{1}{Z_g} \int_{\tilde{\Omega}_l^c} e^{G(\tilde{x})} d\tilde{x} \leq \frac{C l^4 \log (n-l)  }{n^{\frac{1}{2}}\left({1-\frac{C_1 \log^{\frac{1}{2}}n }{n}}\right)^3} \leq \frac{C' l^4 \log n}{n^{\frac{1}{2}}}.
\end{equation}
Combining (\ref{a1}), (\ref{b1}), and (\ref{c1}), we obtain 
$$\Prob_x \left( {\Omega}_l^c  \right) \leq 2n^{-8}+\frac{C' l^4 \log n  e^{(C_1+C_2) \log^{\frac{1}{2}} n} }{n^{\frac{1}{2}} }.$$
Thus,
\begin{equation} \label{com2}
\Prob_x \left( \Omega_{\infty}^c \right) =\Prob_x\left(\bigcup_{l \leq \log^3 n} {\Omega}_l^c\right)  \leq 2n^{-8} \log^3 n +\frac{C'  \log^{13} n e^{C_3 \log^{\frac{1}{2}} n} }{n^{\frac{1}{2}} }=O(n^{-\frac{1}{4}}).
\end{equation}

 Repeating the arguments from Step 2, we have that (\ref{gaussiantail4}), (\ref{gaussiantail2}) and  (\ref{gaussiantail}) hold for $x \in A_{\infty} $ (recall that $A_{\infty}$ is defined in (\ref{A})).
If $x \in {\Omega_{\infty}}$, (\ref{gaussiantail3}) also holds.
Therefore,  if $ x \in \Omega'=A_{\infty} \cap \Omega_{\infty},$ the bound (\ref{tail}) on $F(x)$ still holds. In addition, by (\ref{com1}) and (\ref{com2}), we have \begin{equation} \label{com}
\Prob_x(\Omega'^c) \leq \Prob_x(\Omega_{\infty}^c)+\Prob_x(A_{\infty}^c) =O(n^{-\frac{1}{4}}).
\end{equation}
Thus, we have proved Proposition \ref{tech1}. Combining (\ref{com}), (\ref{gcom}), and (\ref{tail}), one can show that
$$(1-C' \log^{-\frac{3}{2}} n) Z_g \leq \tilde{Z}_n \leq (1+C' \log^{-\frac{3}{2}} n) Z_g.$$
If $A$ is a measurable subset of $ \Omega'$, then
$$\Prob_x(A)=\frac{1}{\tilde{Z_n}} \int_{A} e^{G(x)+F(x)} \leq (1+C'' \log^{-\frac{3}{2}} n) \frac{1}{Z_g} \int_A e^{G(x)} dx= \Prob_g(A)\left(1+O\left(\log^{-\frac{3}{2}} n\right)\right).$$
Similarly, we have $$\Prob_x(A) \geq (1-C'' \log^{-\frac{3}{2}} n) \frac{1}{Z_g} \int_A e^{G(x)} dx= \Prob_g(A)\left(1-O\left(\log^{-\frac{3}{2}} n\right)\right).$$
Combining with (\ref{com}) and (\ref{gcom}), we conclude that for any measurable set $A \subset \Gamma$, 
\begin{equation} \label{diff}
|\Prob_x(A)-\Prob_g(A)| =O\left(\log^{-\frac{3}{2}} n\right).
\end{equation}

\end{proof}

\section{Proof of Theorem \ref{weak}}
In this section, we prove the functional convergence in distribution of $\zeta_n(t)$ to $\zeta(t)$. 
\begin{proof}
First, we establish the convergence of finite-dimensional distributions. Fix finitely many $0 \leq t_1, \cdots, t_m \leq 2 \pi$. Let $j_l=\lfloor \frac{n t_l}{2 \pi} \rfloor, ~l=1, \cdots, m.$ Because of (\ref{x12}) and the construction of $\zeta_n(t)$, with probability $1-o_n(1)$, we have
$$\left|\zeta_n(t_l)-\frac{x_{j_l}}{\sqrt{n}}\right| \leq \left|\frac{x_{j_l}}{\sqrt{n}}-\frac{x_{j_l+1}}{\sqrt{n}}\right|=o_n(1).$$
Using the definition of $\zeta(t)$, one can also show that $\left|\zeta(t_l)-\zeta \left(\frac{2 \pi j_l}{n}\right) \right| =o_n(1)$ with high probability. It is sufficient to prove that $\frac{x_j}{\sqrt{n}}=\zeta \left(\frac{2 \pi j_l}{n}\right)+o_n(1)$ with probability $1-o_n(1)$. By Theorem \ref{tech2}, the finite-dimensional distribution of $(x_{j_1}, \cdots, x_{j_m})$ can be approximated by the finite-dimensional distribution of the Gaussian law defined in (\ref{pg}). Without loss of generality, assume that $n$ is odd. For even case, similar considerations hold. Using the representation (\ref{C}) for $x_j$, we have
$$\frac{x_j}{\sqrt{n}}=\frac{2}{n} \sum_{k=1}^{\frac{n-1}{2}} \left( \cos \left({\frac{2 \pi  j k}{n}}\right) \frac{n^2}{2 k(n-k)} X_k-\sin \left( {\frac{2 \pi  j k}{n}}\right) \frac{n^2}{2 k(n-k)} Y_k \right) $$
$$= \sum_{k=1}^{\frac{n-1}{2}} \left(  \frac{\cos {\frac{2 \pi  j k}{n}}}{k} X_k-\frac{\sin {\frac{2 \pi  j k}{n}}}{k} Y_k \right)+\sum_{k=1}^{\frac{n-1}{2}} \left(  \frac{\cos {\frac{2 \pi  j k}{n}}}{n-k} X_k-\frac{\sin {\frac{2 \pi  j k}{n}}}{n-k} Y_k \right)$$
\begin{equation} \label{x/n}
= \sum_{k=1}^{\infty} \left(  \frac{\cos {\frac{2 \pi  j k}{n}}}{k} X_k-\frac{\sin {\frac{2 \pi  j k}{n}}}{k} Y_k \right)+e_n,~~ \mbox{for}~~ 0 \leq j \leq n-1,
\end{equation}
where $\{X_k\}$ and $\{Y_k\}$ are i.i.d. real standard normal random variables.
Here $$e_n= \sum_{k=\frac{n+1}{2}}^{\infty} \left(  \frac{\cos {\frac{2 \pi  j k}{n}}}{k} X_k-\frac{\sin {\frac{2 \pi  j k}{n}}}{k} Y_k \right)+\sum_{k=1}^{\frac{n-1}{2}} \left(  \frac{\cos {\frac{2 \pi  j k}{n}}}{n-k} X_k-\frac{\sin {\frac{2 \pi  j k}{n}}}{n-k} Y_k \right)$$
is negligible because
$$\V_g (e_n)=\sum_{k=\frac{n+1}{2}}^{\infty} \frac{1}{k^2} +\sum_{k=1}^{\frac{n-1}{2}} \left(  \frac{1}{n-k} \right)^2=O\left(\frac{1}{n} \right).$$
Therefore, by (\ref{diff}), for $1 \leq l \leq m$, 
\begin{equation} \label{xxx}
\zeta_n\left(\frac{2 \pi j_l}{n}\right)=\frac{x_{j_l}}{\sqrt{n}}=\sum_{k=1}^{\infty} \left(  \frac{\cos {\frac{2 \pi  j_l k}{n}}}{k} X_k-\frac{\sin {\frac{2 \pi  j_l k}{n}}}{k} Y_k \right)+e_n=\zeta\left(\frac{2 \pi j_l}{n}\right)+e_n,
\end{equation}
where $\{X_k\}$, $\{Y_k\}$ are iid real Gaussian variables, $e_n$ is a random error term with $\V (e_n) =o_n(1).$ Therefore, one proves that $\zeta_n(t)$ converges in finite dimensional distribution to $\zeta(t)$.\\

 Now, we turn our attention to functional convergence. Note that the sequence of the distributions of $\zeta_n(t)$ gives a family of probability measures on the space $C[0, 2 \pi]$. Because of the finite-dimension distribution convergence, it is sufficient for us to show the tightness of the distribution sequence. A sequence of probability measures $\{ \Prob_n \}$ is tight if only if the following two conditions hold (\cite{weakconvergence}) :

\begin{enumerate}
\item For any small $\eta>0$, there exist corresponding $a$ and $n_0$, such that 
$$\Prob_n\left( f: |f(0)| \geq a \right)  \leq \eta,~ \mbox{for}~~ n \geq n_0. $$
\item For any small $\epsilon, \eta>0$, there exist corresponding $\delta_0$ and $n_0$, such that
$$\Prob_n \left( f: \omega_f(\delta_0) \geq \epsilon  \right) \leq \eta,~ \mbox{for}~~ n \geq n_0,$$
where
$$\omega_f(\delta)=\sup \{ |f(s)-f(t)|: 0 \leq s, t \leq 2 \pi, |s-t| < \delta \}.$$
\end{enumerate}

 To check the first condition, by (\ref{xxx}), we note that 
\begin{equation} \label{}
\zeta_n(0)=\frac{x_0}{\sqrt{n}}=\sum_{k=1}^{\infty} \frac{1}{k} X_k+e_n,
\end{equation}
and thus there exists sufficient large $n_0$ such that if $n>n_0$,
\begin{equation} \label{}
\V (\zeta_n(0)) =\sum_{k=1}^{\infty} \frac{1}{k^2}+o_n(1) \leq \frac{\pi^2}{3}.
\end{equation}
Then
\begin{equation} \label{}
\Prob_x(|\zeta_n(0)| \geq a) \leq \frac{\pi^2}{3 a^2},
\end{equation}
we choose $a=\sqrt{\frac{\pi^2}{ 3 \eta}}$.

 Next, to check the second condition, we need the following lemma.
\begin{lemma}\label{delta1}
There exist positive constants $c_{k} (1 \leq k \leq 6)$ such that
$$\Prob_x \left(  |\zeta_n(t)-\zeta_n(s)| \leq c_1|t-s|^{\frac{1}{20}}+c_2 n^{-\frac{1}{10}} \log n,~~ \forall t,s \in [0, 2\pi]: |t-s| \leq \delta \right) $$
$$> 1- (c_3 e^{-c_4 n^{\frac{4}{5}}}+c_5 \delta^{\frac{4}{5}}+c_6 \log^{-\frac{3}{2}} n).$$
\end{lemma}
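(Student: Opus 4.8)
The plan is to reduce the modulus-of-continuity estimate to a tail bound on increments $\xi^{(l)}_j = x_{j+l}-x_j$ and then invoke a Kolmogorov–Chentsov-type chaining argument, working throughout under $\Prob_g$ and transferring to $\Prob_x$ via Theorem~\ref{tech2}. First I would observe that, by the piecewise-linear construction of $\zeta_n$, for any $s,t$ with $|s-t|\le\delta$ the difference $\zeta_n(t)-\zeta_n(s)$ is controlled by $\frac{1}{\sqrt n}\max_{j}|x_{j+l}-x_j|$ with $l = \lceil n\delta/(2\pi)\rceil$ plus a single lattice-spacing error $\frac{1}{\sqrt n}\max_j|\xi^{(1)}_j|$, which is $O(n^{-1/2}\log^{1/2}n)$ on the event $A_\infty$ from Proposition~\ref{iteration}; this explains the additive $c_2 n^{-1/10}\log n$ term. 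So it suffices to bound $\Prob_g\big(\max_{0\le j\le n-1} |\xi^{(l)}_j| > n^{1/2}\cdot t^{1/20}\cdot(\text{const})\big)$ uniformly in $l\le \lceil n\delta/(2\pi)\rceil$, and then sum/maximize over the dyadic scales of $\delta$.

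The key estimates are the Gaussian moment bounds for $\xi^{(l)}_j$. By Proposition~\ref{covofxi}, $\V_g(\xi^{(l)}_j)\le Cl$, so each $\xi^{(l)}_j$ is a centered Gaussian with standard deviation $O(\sqrt l)$, giving the subgaussian tail $\Prob_g(|\xi^{(l)}_j| > \lambda\sqrt l)\le 2e^{-\lambda^2/(2C)}$ exactly as in~(\ref{c'}). A union bound over $j=0,\dots,n-1$ then yields
$$
\Prob_g\Big(\max_j |\xi^{(l)}_j| > \lambda \sqrt l\Big)\le 2n\, e^{-\lambda^2/(2C)},
$$
and choosing $\lambda \asymp n^{\alpha}$ with a small power (to kill the factor $n$ and the number $O(\log n)$ of dyadic scales) produces the $c_3 e^{-c_4 n^{4/5}}$ term after optimizing the exponents. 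For the Hölder-type gain in $|t-s|$ one writes $l\le n\delta$, so $\sqrt l \le \sqrt n\,\sqrt\delta$; inserting this and absorbing the polynomial prefactors while keeping a genuine power of $\delta$ left over is what produces the exponent $\tfrac1{20}$ (the numerology $\tfrac1{20}$, $\tfrac1{10}$, $\tfrac45$ all come from the same book-keeping of how much of the $\sqrt\delta$ one must spend to beat the $n\log n$ union-bound factors). One then sums the failure probabilities over the $O(\log_2(1/\delta))$ dyadic scales $l\in[2^p,2^{p+1})$, which is where the $c_5\delta^{4/5}$ term arises.

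Finally I would transfer the resulting statement from $\Prob_g$ to $\Prob_x$. Since the event in question is a measurable subset of $\Gamma$, Theorem~\ref{tech2} (equivalently~(\ref{diff})) gives $|\Prob_x(A)-\Prob_g(A)| = O(\log^{-3/2}n)$, which is exactly the source of the $c_6\log^{-3/2}n$ term in the stated bound; one also intersects with $A_\infty$, on whose complement $\Prob_x$ is $o_n(1)$ by Corollary~\ref{A_infty}, to justify replacing $\xi^{(1)}_j$ by its $O(\sqrt{n}\log^{1/2}n)$ bound in the additive error. The main obstacle, and the only genuinely delicate point, is the exponent bookkeeping: one must choose the threshold power $n^\alpha$ in the tail bound and the way $\sqrt\delta$ is split so that (i) the union bound over $j$ and over dyadic scales is absorbed, (ii) a positive power of $\delta$ survives, and (iii) the $n$-dependent error is genuinely $o_n(1)$ — the specific constants $\tfrac1{20},\tfrac1{10},\tfrac45$ are not sharp and any internally consistent choice works, but one has to be careful that the same $\lambda$ serves all scales $l$ simultaneously. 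Everything else is the routine Gaussian tail plus chaining machinery.
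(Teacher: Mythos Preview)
Your high-level framework is correct: reduce to $\Prob_g$ via (\ref{diff}) (giving the $c_6\log^{-3/2}n$), use the variance bound $\V_g(\xi^{(l)}_j)\le Cl$ from Proposition~\ref{covofxi}, and do a chaining/Kolmogorov--Chentsov argument. But the execution has a genuine gap.

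The problem is your choice of a \emph{single} threshold $\lambda\asymp n^\alpha$ for all scales. On the good event your bound reads $|\zeta_n(t)-\zeta_n(s)|\le \lambda\sqrt{l}/\sqrt{n}$ with $l\sim n|t-s|$, i.e.\ $|\zeta_n(t)-\zeta_n(s)|\le n^\alpha\sqrt{|t-s|}$. This blows up with $n$ and is never $\le c_1|t-s|^{1/20}$ uniformly in $n$; there is no ``absorbing the polynomial prefactors'' that fixes this, because the prefactor is $n^\alpha$, not a polynomial in $\log n$ or $1/\delta$. Equally, your explanation of the additive $c_2 n^{-1/10}\log n$ as a single lattice-spacing error is off: on $A_\infty$ that error is $O(n^{-1/2}\log^{1/2}n)$, far smaller than $n^{-1/10}\log n$, so it cannot be the source.

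What the paper actually does is split into two regimes. For sub-lattice separations $|t-s|\le C_0/n$ it uses the Gaussian tail with the \emph{fixed} threshold $\epsilon=n^{-1/10}$, so that $\V_g(x_k-x_{k-1})\sim 1$ gives failure probability $\le n^2 e^{-c\epsilon^2 n}=n^2 e^{-cn^{4/5}}$; this is the origin of both the $c_3 e^{-c_4 n^{4/5}}$ term and (after summing $O(\log n)$ such contributions along the chain) the additive $c_2 n^{-1/10}\log n$. For larger separations it does a dyadic decomposition of $[s,t]$ into intervals $\Delta^{(k)}_l$ of length $2\pi/2^k$, and on each such interval applies not a Gaussian tail but the \emph{fourth-moment Chebyshev} bound (Lemma~\ref{delta2}): since $\V_g(x_i-x_j)\sim|i-j|$ one gets $\Prob_g\bigl(D_n(\Delta^{(k)}_l)>(2\pi/2^k)^{1/20}\bigr)\lesssim (2^{-k})^{9/5}$. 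A union over the $2^k$ intervals at level $k$ yields $2^{-4k/5}$, and summing over $k\ge k_0\sim\log(1/\delta)$ gives the $c_5\delta^{4/5}$. The point is that the polynomial tail $|t-s|^{9/5}$ interacts correctly with the $2^k$-fold union bound at every scale, whereas a uniform Gaussian threshold does not.

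Your sketch can be repaired, but only by making $\lambda$ scale-dependent (e.g.\ $\lambda_k\asymp\sqrt{k}$ at dyadic level $k$, which is the standard Gaussian-chaining choice and would in fact yield a better H\"older exponent), or by switching to a high-moment Markov bound as the paper does. As written, with one $\lambda\asymp n^\alpha$, the argument does not close.
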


 Assuming that Lemma \ref{delta1} is proved, we can finish the proof of Theorem \ref{weak} by choosing $\delta_0$ and $n_0$ such that
\begin{equation} \label{}
c_1 \delta_0^{\frac{1}{20}} \leq \frac{\epsilon}{2},~~~~c_2 n_0^{-\frac{1}{10}} \log n_0 \leq \frac{\epsilon}{2};
\end{equation}
\begin{equation} \label{}
c_3 e^{-c_4 n_0^{\frac{4}{5}}} \leq \frac{\eta}{3},~~~~~c_5 \delta_0^{\frac{4}{5}} \leq \frac{\eta}{3},~~~c_6 \log^{-\frac{3}{2}} n_0 \leq \frac{\eta}{3}.
\end{equation}
\end{proof}

 Finally, we need to prove Lemma \ref{delta1}.
\begin{proof}(Lemma \ref{delta1})
Because of (\ref{diff}), it is sufficient to prove that 
$$\Prob_g \left(  |\zeta_n(t)-\zeta_n(s)| \leq c_1|t-s|^{\frac{1}{20}}+c_2 n^{-\frac{1}{10}} \log n,~~ \forall t,s \in [0, 2\pi]: |t-s| \leq \delta \right)$$
$$ > 1- (c_3 e^{-c_4 n^{\frac{4}{5}}}+c_5 \delta^{\frac{4}{5}}).$$
Without loss of generality, assume that $s < t$.\\
 Case 1: Let us fix $C_0>0$ and assume that $|t-s|  \leq  \frac{C_0}{n}$. Then there exist $ i, j $ with $|i-j| \leq C_0+2$, such that 
\begin{equation} \label{}
s \in \left[\frac{2 \pi (i-1)}{n},\frac{2 \pi i}{n}\right], ~~~t \in \left[\frac{2 \pi j}{n},\frac{2 \pi (j+1)}{n}\right].
\end{equation}
By the triangle inequality, we have 
$$\Prob_g \left( \left| \zeta_n(t)-\zeta_n(s)\right| >\epsilon  \right) \leq \sum_{i \leq k \leq j+1} \Prob_g \left(  \left|\zeta_n\left(\frac{2 \pi (k-1)}{n}\right)-\zeta_n\left(\frac{2 \pi k}{n}\right) \right| >  \frac{\epsilon}{|i-j|+2} \right).$$
Note that $\zeta_n\left(\frac{2 \pi k}{n}\right)=\frac{x_k}{\sqrt{n}}$ and $\V (x_k-x_{k-1}) \sim 1$ for all $k$. Then for some positive constant $C_2, C_3$ depending on $C_0$, we have
$$\Prob_g \left( \left|\zeta_n\left(\frac{2 \pi (k-1)}{n}\right)-\zeta_n\left(\frac{2 \pi k}{n}\right) \right| \geq C_2 \epsilon  \right) =\Prob_g \left( \left| x_{k-1}-x_k  \right| \geq C_2 \epsilon \sqrt{n}  \right) \leq C_3 e^{-c' C^2_2 \epsilon^2 n}.$$
Thus 
\begin{equation} \label{}
\Prob_g \left( \left| \zeta_n(t)-\zeta_n(s)\right| >\epsilon  \right) \leq (C_0+2) C_3 e^{-c' C^2_2 \epsilon^2 n}.
\end{equation}
Furthermore, there exist positive constants $C_4, C_5$ which only depend on $C_0$, such that
\begin{equation} \label{}
\Prob_g \left( \exists s,t \in [0, 2\pi],  |t-s| \leq \frac{C_0}{n} \mbox{~~and~~} \left| \zeta_n(t)-\zeta_n(s)\right| >\epsilon \right) \leq n^2 C_0 C_3 e^{-c' C^2_2 \epsilon^2 n} \leq C_5 e^{-C_4 {\epsilon^2 n}}.
\end{equation}

 Case 2:
If $|t-s| \gg \frac{1}{n}$, we introduce a new partition of $[s,t]$. We start by dividing the interval $[0, 2 \pi]$ into $2^k$ disjoint subintervals
\begin{equation} \label{}
\Delta^{(k)}_ l=\left[\frac{2 \pi }{2^k} l,\frac{2 \pi }{2^k} (l+1)\right],~~l=0,1, \cdots, 2^k-1.
\end{equation}
There exists the smallest $k=k_0$, and related $l_0$, such that $\Delta^{(k_0)}_{l_0} \subset [s,t]$. Note that $k_0$ and $l_0$ are unique. Let $S_0=\Delta^{(k_0)}_{l_0}$. If $[s,t] \neq S_0$, then there exists the unique smallest $k_1>k_0$, and one or two values of $l_1$, such that $\Delta^{(k_1)}_{l_1} \subset [s,t] \setminus S_0$ (we could potentially add $\Delta^{(k_1)}_{l_1}$  on the left of $\Delta^{(k_0)}_{l_0}$ or add one on the right). If there is only one value of $l_1$, let $\Delta^{(k_1)}_{a_1}=\Delta^{(k_1)}_{l_1}$ and $\Delta^{(k_1)}_{b_1}=\emptyset$. If there are two values of $l_1$, let $a_1$ be the smallest of the two and $b_1$ the largest. Set $S_1=S_0 \cup \Delta^{(k_1)}_{b_1} \cup \Delta^{(k_1)}_{a_1}$. We continue this process. For each $m \geq 2$, find a unique smallest $k_m>k_{m-1}$, $\Delta^{(k_m)}_{a_m}, \Delta^{(k_m)}_{b_m} \subset [s,t] \setminus S_{m-1}$ (recall that $\Delta^{(k_m)}_{b_m}$ might be empty). Let $S_m=S_{m-1} \cup \Delta^{(k_m)}_{b_m} \cup \Delta^{(k_m)}_{a_m}$. We will stop at $m=r$, when either $[s,t]=S_r$ or the length of $S_{r+1} \defeq [s,t] \setminus S_r \leq \frac{C_0}{n}$, where $C_0>0$ is some fixed constant. Thus,
\begin{equation} \label{partition}
[s,t]=S_{0} \cup S_{r+1} \cup \left( \bigcup_{m=1}^r \Delta^{(k_m)}_{b_m} \right) \cup \left( \bigcup_{m=1}^{r} \Delta^{(k_m)}_{a_m} \right).
\end{equation}
Note that 
\begin{equation} \label{k_0}
\frac{1 }{2^{k_0}} \leq |s-t| \leq \frac{3 }{2^{k_0}}, ~~~\frac{1}{2^{k_r}} \leq \frac{C_0}{n} \leq \frac{1}{2^{k_r-1}},
\end{equation}
i.e. $k_0 \sim -\log |s-t|, k_r  \sim  \log n$.\\

 For any interval $I=[a,b]$, define $D_n(I) \defeq |\zeta_n(a)-\zeta_n(b)|$. For example,  $D_n(\Delta_{l}^{(k)}) \defeq \left|  \zeta_n \left( \frac{2 \pi }{2^k} (l+1)\right)-\zeta_n \left( \frac{2 \pi }{2^k} l \right) \right|$. Let $a_0=l_0$. Then by the triangle inequality, we have
$$D_n([s,t]) \leq  D_n(\Delta^{(k_0)}_{l_0}) +\sum_{m=1}^r D_n(\Delta_{a_m}^{(k_m)})+\sum_{m=1}^r D_n(\Delta_{b_m}^{(k_m)})+D_n(S_{r+1}) \leq 2 \sum_{m=0}^r D_n(\Delta_{a_m}^{(k_m)})+D_n(S_{r+1}).$$
Since $|S_{r+1}| \leq \frac{C_0}{n}$, using the result in Case 1 and letting $\epsilon=n^{-\frac{1}{10}}$, we have for some constants $C_1, C_2>0$,
\begin{equation} \label{remain}
\Prob_g \left( D_n(S_{r+1}) >n^{-\frac{1}{10}},~~\exists S_{r+1} \subset [0, 2 \pi]: |S_{r+1}| \leq \frac{C_0}{n}  \right) \leq C_1 e^{-C_2 n^{\frac{4}{5}} }.
\end{equation}
 To estimate $D_n(\Delta_{a_m}^{(k_m)})$, we need the following lemma.
\begin{lemma}\label{delta2}
Fix $s,t \in [0, 2 \pi]$. Then there exist some constants $C_1$, $C_2$, $C_4>0$, such that 
$$\Prob_g \left( \left| \zeta_n(t)-\zeta_n(s)\right| >|t-s|^{\frac{1}{20}}  +2 n^{-\frac{1}{10}}\right) \leq 2 C_1 e^{-C_2 n^{\frac{4}{5}}}+ C_4 |t-s|^{\frac{9}{5}}.$$
\end{lemma}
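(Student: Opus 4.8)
The plan is to reduce the bound to a single Gaussian tail estimate for a one–dimensional centered Gaussian. Under $\Prob_g$ the vector $(x_0,\dots,x_{n-1})$ is a centered (degenerate) Gaussian vector on $\Gamma$ by the construction in Section~5, and by the piecewise linear definition of $\zeta_n$ the increment $\zeta_n(t)-\zeta_n(s)$ is, for fixed $s\le t$, a fixed linear functional of this vector, hence itself a centered Gaussian variable. Therefore $\Prob_g(|\zeta_n(t)-\zeta_n(s)|>u)\le 2\exp\!\big(-u^2/(2\sigma_{s,t}^2)\big)$ with $\sigma_{s,t}^2=\V_g(\zeta_n(t)-\zeta_n(s))$, so it suffices to prove the variance bound $\sigma_{s,t}^2\le C'\big(|t-s|+n^{-1}\big)$.

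To obtain that bound, set $i=\lfloor ns/(2\pi)\rfloor$ and $j=\lfloor nt/(2\pi)\rfloor$, so that $0\le i\le j$ and $j-i\le n|t-s|/(2\pi)+1$. Writing $\zeta_n(s)$ and $\zeta_n(t)$ as the corresponding convex combinations of neighbouring lattice values and using $x_{n+k}=x_k$, one gets $\sqrt n\,(\zeta_n(t)-\zeta_n(s))=(x_j-x_i)+\beta(x_{j+1}-x_j)-\alpha(x_{i+1}-x_i)$ for some $\alpha,\beta\in[0,1)$. Each of the three terms is of the form $\xi^{(l)}_k$ with shift $l$ equal to $j-i$, $1$, and $1$ respectively, so by \pref{covofxi} (in particular (\ref{covofxi4})) together with $\V_g(a+b+c)\le 3(\V_g a+\V_g b+\V_g c)$ we obtain $\sigma_{s,t}^2\le \frac{3C}{n}\big((j-i)+2\big)\le C'\big(|t-s|+n^{-1}\big)$.

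Now I split into two cases. If $|t-s|\ge 1/n$, then $\sigma_{s,t}^2\le 2C'|t-s|$; bounding the threshold from below by $|t-s|^{1/20}$ and applying the Gaussian tail bound gives
\[
\Prob_g\!\big(|\zeta_n(t)-\zeta_n(s)|>|t-s|^{1/20}+2n^{-1/10}\big)\le 2\exp\!\big(-c\,|t-s|^{-9/10}\big),\qquad c=\tfrac{1}{4C'}.
\]
The elementary estimate $2e^{-c\Delta^{-9/10}}\le C_4\,\Delta^{9/5}$ for all $\Delta\in(0,2\pi]$ — valid because $\Delta\mapsto 2e^{-c\Delta^{-9/10}}\Delta^{-9/5}$ extends continuously to $[0,2\pi]$ with value $0$ at $\Delta=0$, hence is bounded — then produces the first error term. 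If instead $|t-s|<1/n$, then $\sigma_{s,t}^2\le 2C'/n$, and bounding the threshold from below by $2n^{-1/10}$ gives $\Prob_g(\cdots)\le 2\exp(-n^{4/5}/C')$, which is absorbed into $2C_1e^{-C_2n^{4/5}}$. In both cases the two asserted error terms dominate, proving the lemma.

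The only genuinely delicate point is the bookkeeping: propagating the piecewise linear interpolation through to the increments $\xi^{(l)}$, controlling the $O(1/n)$ discrepancy between $|t-s|$ and $(j-i)/n$, and — most importantly — choosing the exponents ($\tfrac{1}{20}$, $\tfrac{1}{10}$, $\tfrac{9}{5}$) so that the super-polynomially small Gaussian tail $e^{-c|t-s|^{-9/10}}$ can be traded for the polynomial quantity $|t-s|^{9/5}$ that is needed when this lemma feeds the chaining argument of \lref{delta1}. Given \pref{covofxi}, everything else is a routine Gaussian computation.
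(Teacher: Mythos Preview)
Your argument is correct, and it is actually a bit cleaner than the paper's. The paper does not exploit that the full increment $\zeta_n(t)-\zeta_n(s)$ is itself a one–dimensional Gaussian: instead it splits via the triangle inequality into the two ``boundary'' pieces $\zeta_n(2\pi(i-1)/n)-\zeta_n(2\pi i/n)$, $\zeta_n(2\pi j/n)-\zeta_n(2\pi(j+1)/n)$ and the lattice piece $\zeta_n(2\pi i/n)-\zeta_n(2\pi j/n)$. The two boundary pieces are handled exactly as you do (Gaussian tail with threshold $n^{-1/10}$, giving $e^{-C_2 n^{4/5}}$), but for the lattice piece the paper uses only a fourth–moment Chebyshev bound, $\Prob_g\big(|x_i-x_j|/\sqrt n>|t-s|^{1/20}\big)\le C_3|i-j|^2/(n^2|t-s|^{1/5})\le C_4|t-s|^{9/5}$, rather than the full Gaussian tail. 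Your route avoids the three–way split, bounds the variance of the whole increment in one stroke via \pref{covofxi}, and then converts the super-polynomial Gaussian tail $e^{-c|t-s|^{-9/10}}$ back down to the polynomial $|t-s|^{9/5}$ needed for the chaining in \lref{delta1}; what you gain is conceptual economy, what the paper's version gains is that the polynomial bound $|t-s|^{9/5}$ drops out of Chebyshev without the extra step of dominating an exponential by a polynomial.
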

\begin{proof}(Lemma \ref{delta2})
 Fix $s,t$. Then there exist $i, j$, such that $$s \in \left[\frac{2 \pi (i-1)}{n},\frac{2 \pi i}{n}\right], ~~~t \in \left[\frac{2 \pi j}{n},\frac{2 \pi (j+1)}{n}\right].$$
Thus, we have
$$\Prob_g \left( \left| \zeta_n(t)-\zeta_n(s)\right| >|t-s|^{\frac{1}{20}}+2 n^{-\frac{1}{10}}  \right) \leq \Prob_g \left(  \left|\zeta_n\left(\frac{2 \pi (i-1)}{n}\right)-\zeta_n\left(\frac{2 \pi i}{n}\right) \right| > n^{-\frac{1}{10}}\right)$$
$$+ \Prob_g \left(\left|\zeta_n\left(\frac{2 \pi j}{n}\right)-\zeta_n\left(\frac{2 \pi (j+1)}{n}\right)\right| >n^{-\frac{1}{10}}\right) +\Prob_g \left( \left|\zeta_n\left(\frac{2 \pi i}{n}\right)-\zeta_n\left(\frac{2 \pi j}{n}\right)\right|  >  |t-s|^{\frac{1}{20}}\right).$$
Note that $\zeta_n\left(\frac{2 \pi k}{n}\right)=\frac{x_k}{\sqrt{n}}$, and $\V (x_k-x_l) \sim |k-l|$. Thus, we have
$$\Prob_g \left( \left|\zeta_n\left(\frac{2 \pi i}{n}\right)-\zeta_n\left(\frac{2 \pi j}{n}\right)\right|  >  |t-s|^{\frac{1}{20}}\right)  \leq  \frac{C_3 |i-j|^2}{ n^2 |t-s|^{\frac{1}{5}}} \leq C_4 |t-s|^{\frac{9}{5}},$$
where the last inequality comes from $\frac{2 \pi |i-j|}{n} \leq |t-s|$. Therefore,
$$\Prob_g \left( \left| \zeta_n(t)-\zeta_n(s)\right| >|t-s|^{\frac{1}{20}}  +2 n^{-\frac{1}{10}}\right) \leq 2 C_1 e^{-C_2 n^{\frac{4}{5}}}+ C_4 |t-s|^{\frac{9}{5}}.$$
\end{proof}

 By the result of Lemma \ref{delta2},  we have
\begin{equation} \label{union1}
\Prob_g \left(\bigcup_{l=0}^{2^k-1}    \left\{ D(\Delta_{l}^{(k)}) >\left(\frac{2 \pi}{2^k}\right)^{\frac{1}{20}}+2 n^{-\frac{1}{10}}  \right\}\right) \leq 2^k \left(2 C_1 e^{-C_2 n^{\frac{4}{5}}}+C_4 \left(\frac{2 \pi}{2^k}\right)^{\frac{9}{5}} \right)$$
$$\leq 2^{k+1} C_1 e^{-C_2 n^{\frac{4}{5}}}+\frac{C_5}{(2^k)^{\frac{4}{5}}}.
\end{equation}
Therefore,
\begin{equation} \label{union2}
\Prob_g \left(\bigcup_{k \geq k_0}^{k_r}  \bigcup_{l=0}^{2^k-1}    \left\{  D(\Delta_{l}^{(k)})>\left(\frac{2 \pi}{2^k}\right)^{\frac{1}{20}}+2 n^{-\frac{1}{10}}  \right\}\right) \leq  C_6 2^{k_r} e^{-C_2 n^{\frac{4}{5}}}+\frac{C_7}{(2^{k_0})^{\frac{4}{5}}}.
\end{equation}
Combining with (\ref{remain}), we have
$$\Prob_g \left(\bigcap_{k \geq k_0}^{k_r}  \bigcap_{l=0}^{2^k-1}    \left\{  D(\Delta_{l}^{(k)}) \leq \left(\frac{2 \pi}{2^k}\right)^{\frac{1}{20}}+2 n^{-\frac{1}{10}}  \right\} \bigcap \left\{D(S_{r+1}) \leq n^{-\frac{1}{10}} ~~\forall S_{r+1} \subset [0, 2 \pi]: |S_{r+1}| \leq \frac{C_0}{n} \right\} \right)$$
\begin{equation} \label{union3}
 \geq  1-\left((C_1+C_6) 2^{k_r} e^{-C_2 n^{\frac{4}{5}}}+\frac{C_7}{(2^{k_0})^{\frac{4}{5}}}\right).
\end{equation}
Due to $r \leq k_r$ and (\ref{partition}), then
$$\mbox{LHS of } (\ref{union3}) \leq \Prob_g \left(  |\zeta_n(s)-\zeta_n(t)| \leq  C_8 \left(\frac{1}{2^{k_0}} \right)^{\frac{1}{20}} +C_9 k_r n^{-\frac{1}{10}} ,~~\forall s,t \in [0, 2\pi]: |t-s| \leq \delta  \right).$$
Therefore, combining with (\ref{k_0}), we have
$$\Prob_g \left(|\zeta_n(s)-\zeta_n(t)| \leq c_1 |s-t|^{\frac{1}{20}}+c_2 n^{-\frac{1}{10}} \log n,~~\forall s,t \in [0, 2\pi]: |t-s| \leq \delta  \right)  \geq 1-( c_3 e^{-c_4 n^{\frac{4}{5}}}+c_5 \delta^{\frac{4}{5}}).$$

\end{proof}

\section{Proof of Corollary \ref{CLT}}
This section is devoted to the proof of the Gaussian fluctuation of $\sqrt{n} \sum_{j=0}^{n-1} g(\theta_j)$.
\begin{proof} 

Taking the Taylor expansions of $g(\theta)$ around $\theta_j=\frac{2 \pi j}{n}+\psi$, we have for $0 \leq \delta \leq 1$,
\begin{equation} \label{clttaylor1}
\sqrt{n} \sum_{j=0}^{n-1} g(\theta_j)=\sqrt{n} \sum_{j=0}^{n-1} g\left(\frac{2 \pi j}{n}+\psi\right)+\sqrt{n} \sum_{j=0}^{n-1} g'\left(\frac{2 \pi j}{n}+\psi+\delta \frac{x_j}{n^2}\right) \frac{x_j}{n^2}.
\end{equation}

Due to the assumption $\sum_{k=-\infty}^{\infty} |k|^{\frac{3}{2}} |c_k| < \infty$ where $c_k$ are complex Fourier coefficients, $g(x) =\sum_{k=-\infty}^{\infty} c_k e^{ikx}$ converges absolutely. Thus, the first term of (\ref{clttaylor1}) can be written as
\begin{equation}\label{666}
\sqrt{n} \sum_{j=0}^{n-1} g\left(\frac{2 \pi j}{n}+\psi \right)=n^{\frac{3}{2}} c_0+ \sqrt{n} \sum_{k=\pm \frac{n}{2}, \pm n \cdots} n c_k e^{i k \psi}  + \sum_{k \neq 0, \pm \frac{n}{2}, \pm n \cdots} c_k e^{i k \psi} \sum_{j=0}^{n-1} e^{i\frac{2 \pi k}{n}j}.
\end{equation}
Because $\sum_{j=0}^{n-1} e^{i\frac{2 \pi k}{n}j}=0$ when $k \neq 0, \pm \frac{n}{2}, \pm n \cdots$,  the last term of (\ref{666}) vanishes. One can estimate the second term of (\ref{666}) in the following.

\begin{equation}
|\sqrt{n} \sum_{k=\pm \frac{n}{2}, \pm n \cdots} n c_k e^{i k \psi} | \leq \sum_{k=\pm \frac{n}{2}, \pm n \cdots} n^{\frac{3}{2}} |c_k| \leq \sum_{k=\pm \frac{n}{2}, \pm n \cdots} (2 |k|)^{\frac{3}{2}} |c_k| \leq 2^{\frac{3}{2}} \sum_{|k| \geq \frac{n}{2}}^{\infty} |k|^{\frac{3}{2}} |c_k| = o_n(1).
\end{equation}

Therefore, we have shown that the first term of (\ref{clttaylor1}) is almost deterministic and
\begin{equation}\label{mean}
\sqrt{n} \sum_{j=0}^{n-1} g\left(\frac{2 \pi j}{n}+\psi \right)=n^{\frac{3}{2}} c_0+o_n(1).
\end{equation}

Because of (\ref{clttaylor1}) and (\ref{mean}), define
 \begin{equation}\label{s_n}
 \mathfrak{s}_n(x)=\left( \sqrt{n} \sum_{j=0}^{n-1} g ( \theta_j )-n^{\frac{3}{2}} c_0 \right)=\sqrt{n} \sum_{j=0}^{n-1} g'\left(\frac{2 \pi j}{n}+\psi+\delta \frac{x_j}{n^2} \right)  \frac{x_j}{n^2} +o_n(1).
 \end{equation}
 
Since $e^{i t \mathfrak{s}_n(x)}$ is a bounded function, then
$$\E \left( e^{i t \mathfrak{s}_n(x)} \right) \leq \frac{1}{\tilde{Z_n}} \int_{\Omega'} e^{i t \mathfrak{s}_n(x)} e^{G(x)+o_n(1)} dx+\Prob_x(\Omega'^c) \leq  \frac{1+o_n(1)}{1-o_n(1)}\frac{1}{Z_g} \int_{\Gamma} e^{i t \mathfrak{s}_n(x)} e^{G(x)} dx+\Prob_x(\Omega'^c)$$
$$\leq \E_g \left( e^{i t \mathfrak{s}_n(x)} \right) (1+o_n(1)).$$
Here $\E_g$ indicates taking expectation with respect to the multivariate Gaussian distribution defined in (\ref{pg}).
Similarly, we can show that 
$$\E \left( e^{i t \mathfrak{s}_n(x)} \right) \geq \frac{1-o_n(1)}{1+o_n(1)} \frac{1}{Z_g}  \int_{\Omega'} e^{i t \mathfrak{s}_n(x)} e^{G(x)} dx=\frac{1-o_n(1)}{1+o_n(1)} \frac{1}{Z_g} \int_{\Gamma} e^{i t \mathfrak{s}_n(x)} e^{G(x)} dx-\frac{1-o_n(1)}{1+o_n(1)} \Prob_g(\Omega'^c)$$
$$ \geq \E_g \left( e^{i t \mathfrak{s}_n(x)} \right) (1-o_n(1)).$$
Therefore, we obtain that $$\E \left( e^{i t \mathfrak{s}_n(x)} \right)=\E_g \left( e^{i t \mathfrak{s}_n(x)} \right)(1+o_n(1)).$$ Thus, it is sufficient for us to prove that $\mathfrak{s}_n(x)$ converges in distribution to $N(0, \sum_{k=1} ^{\infty} |c_{k}|^2)$ in the Gaussian case, i.e. 
$$\E_g \left( e^{i t \mathfrak{s}_n(x)} \right)=\exp \left( { -\frac{t^2}{2} \sum_{k=1}^{\infty} |c_k|^2} \right) (1+o_n(1)).$$
Then, from now on, we assume that $x$ follows the Gaussian distribution. Next, by (\ref{s_n}), we estimate the dominant part of $ \mathfrak{s}_n(x)$, i.e. $\sqrt{n} \sum_{j=0}^{n-1} g'\left(\frac{2 \pi j}{n}+\psi+\delta \frac{x_j}{n^2} \right)  \frac{x_j}{n^2} $.
 
The assumption  $\sum_{k=-\infty}^{\infty} |k|^{\frac{3}{2}} |c_k| < \infty$ implies that $g' \in C^{\frac{1}{2}}(\abc)$. In other words, there exists some constant $A$ such that $|g'(x)-g'(y)| \leq A|x-y|^{\frac{1}{2}}$ for all $x,y$. Since $\V (x_j) \sim n$, with high probability, $|x_j| \leq n^{\frac{1}{2}+\gamma} (\gamma>0)$ for all $0 \leq j \leq n-1$. Then we have
$$\left|g'\left(\frac{2 \pi j}{n}+\psi+\delta \frac{x_j}{n^2}\right)-g'\left(\frac{2 \pi j}{n}+\psi\right)\right| \leq A \delta^{\frac{1}{2}} \frac{x_j^{\frac{1}{2}}}{n} \leq A \delta^{\frac{1}{2}} \frac{\left(n^{\frac{1}{2}+\gamma}\right)^{\frac{1}{2}}}{n}=O(n^{-\frac{3}{4}+ \frac{1}{2} \gamma} ).$$
Thus,
\begin{equation}\label{111}
\left| \sqrt{n} \sum_{j=0}^{n-1} g'\left(\frac{2 \pi j}{n}+\psi+\delta \frac{x_j}{n^2}\right) \frac{x_j}{n^2}-\sqrt{n} \sum_{j=0}^{n-1} g'\left(\frac{2 \pi j}{n}+\psi \right) \frac{x_j}{n^2}   \right| =O(n^{-\frac{3}{4}+ \frac{3}{2} \gamma})=o_n(1),
\end{equation}
if we choose $\gamma < \frac{1}{2}$.

 Therefore, with high probability, the RHS of (\ref{s_n}) can be written as
\begin{equation} \label{}
\sqrt{n} \sum_{j=0}^{n-1} g'\left(\frac{2 \pi j}{n}+\psi+\delta \frac{x_j}{n^2}\right) \frac{x_j}{n^2}+o_n(1)=\sqrt{n} \sum_{j=0}^{n-1} g'\left(\frac{2 \pi j}{n}+\psi\right) \frac{x_j}{n^2}+o_n(1).
\end{equation}
Since $x$ follows the centered Gaussian distribution, we only need to prove the convergence of the variance. 
Without loss of generality, we can assume that $n$ is odd.  Using (\ref{x/n}), we have
$$\sqrt{n} \sum_{j=0}^{n-1} g'(\alpha_j) \frac{x_j}{n^2} =\frac{1}{n} \sum_{j=0}^{n-1} g'(\alpha_j) \left( \sum_{k=1}^{\frac{n-1}{3}} \left(  \frac{\cos {\frac{2 \pi  j k}{n}}}{k} X_k-\frac{\sin {\frac{2 \pi  j k}{n}}}{k} Y_k \right)+e_n \right)$$
\begin{equation} \label{}
= \sum_{k=1}^{\frac{n-1}{3}} \frac{1}{k} \left( \frac{1}{n}  \sum_{j=0}^{n-1} g'(\alpha_j)  \cos {\frac{2 \pi  j k}{n}} \right)X_k -\sum_{k=1}^{\frac{n-1}{3}} \frac{1}{k} \left( \frac{1}{n}  \sum_{j=0}^{n-1} g'(\alpha_j)  \sin {\frac{2 \pi  j k}{n}} \right)Y_k  +r_n,
\end{equation}
where $r_n$ is a random error term with $\V r_n =O \left( \frac{1}{n} \right)$. 
Since $\sum_{k=-\infty}^{\infty} |k|^{\frac{3}{2}} |c_k| < \infty$, $g'(x)=\sum_{l=-\infty}^{\infty}  i l c_l e^{i l x}$ is convergent absolutely. Thus, for $0 \leq k \leq \frac{n-1}{3}$, we have
 \begin{equation} \label{}
  \frac{1}{n}  \sum_{j=0}^{n-1} g'(\alpha_j)  \cos {\frac{2 \pi  j k}{n}}=\frac{1}{2n} \sum_{j=0}^{n-1} \sum_{l=-\infty}^{\infty} i l c_l e^{i  \frac{2 \pi l}{n}j+i l \psi} (e^{i \frac{2 \pi k}{n} j}+e^{-i \frac{2 \pi k}{n} j})
  \end{equation}
 $$=\frac{1}{2n} \sum_{l=-\infty}^{\infty} i l c_l e^{i l \psi} \sum_{j=0}^{n-1} \left( e^{i  \frac{2 \pi (l+k)}{n}j} +e^{i  \frac{2 \pi (l-k)}{n}j} \right) =O \left( \sum_{l= \pm n, \cdots } l c_l \right) +\frac{1}{2} \sum_{l \pm k=0, \pm \frac{n}{2}, \cdots} i l c_l e^{i l \psi}$$
 $$=o_n(1)+\frac{i k c_k e^{i k \psi}-i k c_{-k} e^{-i k \psi}}{2}=o_n(1)+\frac{1}{2 \pi} \int_{0}^{2\pi} g'(x+\psi) \cos k x dx$$
 $$=-\frac{k}{2 \pi}  \int_{0}^{2 \pi} g(x) \sin k x dx+o_n(1)=-\frac{k}{2} b_k+o_n(1),$$
 where $b_k= \frac{1}{\pi}\int_{0}^{2\pi} g(x) \sin k x dx $. Similarly, we have
 \begin{equation} \label{}
\frac{1}{n}  \sum_{j=0}^{n-1} g'(\alpha_j)  \cos {\frac{2 \pi  j k}{n}}=o_n(1)+\frac{ k c_k e^{ i k \psi}+ k c_{-k} e^{-i k \psi}}{2}=\frac{k}{2} a_k+o_n(1),
\end{equation}
where $a_k= \frac{1}{\pi}\int_{0}^{2\pi} g(x) \cos k x dx $.
Thus,
$$\V_g \left( \sqrt{n} \sum_{j=0}^{n-1} g'(\alpha_j) \frac{x_j}{n^2} \right)=\frac{1}{4} \sum_{k=1}^{\infty} (a_k^2+b_k^2)+o_n(1)=\sum_{k=1} ^{\infty} |c_{k}|^2+o_n(1).$$
We have finished the proof of Corollary \ref{CLT}.

\end{proof}

\section{Appendix}

 In this section, we will give the proof of the formulas in Lemma \ref{lemma}. 
 
 \begin{proof}
 We start with the proof of the formula (\ref{formula1}). Let $z_{k}=e^{i \frac{2 \pi k}{n}}~(0 \leq k \leq n-1)$. Then,
$$\sum_{k=1}^{n-1} \frac{1}{ \sin^2 \left( \frac{ \pi k}{n} \right)}=\sum_{k=1}^{n-1} \left( \frac{2 i}{e^{i \frac{\pi k}{n}}-e^{-i \frac{\pi k}{n}}} \right)^2=-4 \sum_{k=1}^n \frac{z_k}{ \left( z_k-1 \right)^2}.$$
Let $$F_1(z)=\frac{1}{(z-1)^2(z^n-1)}=\frac{1}{(z-1)^3(1+z+\cdots+z^{n-1})},$$
which is a holomorphic function on $\C$ except $z=z_k~(0 \leq k \leq n-1)$. Note that $z=z_k ~(1 \leq k \leq n-1)$ are the simple poles of $F_1(z)$, and $$z^n-1=\prod_{j=0}^{n-1} (z-z_j), ~~~\prod_{j: j \neq k} (z_k-z_j)=n z_k^{-1}.$$
Combining it with the residue formula, we have $$\sum_{k=1}^{n-1} \frac{1}{ \sin^2 \left( \frac{ \pi k}{n} \right)}=-4 n \sum_{k=1}^{n-1} \Res_{z=z_k} F_1(z)=4n \Res_{z=1} F_1(z).$$
Since $z=1$ is a pole of order $3$, we obtain that
$$\Res_{z=1} F_1(z)=\frac{1}{2!} \left( \frac{1}{1+z+ \cdots +z^{n-1}} \right)''\Big|_{z=1}=\frac{n^2-1}{12 n}.$$
Thus we have proved the formula (\ref{formula1}).\\
 Similarly, we can prove (\ref{formula2}). Let $$F_2(z)=\frac{z}{(z-1)^4(z^n-1)}=\frac{1}{(z-1)^3(z^n-1)}+\frac{1}{(z-1)^4(z^n-1)}.$$
Then
$$\sum_{k=1}^{n-1} \frac{1}{ \sin^4 \left( \frac{ \pi k}{n} \right)}=16 \sum_{k=1}^{n-1} \frac{z_k^2}{(z_k-1)^4}=16 n \sum_{k=1}^{n-1} \Res_{z=z_k} F_2(z)=-16 n \Res_{z=1} F_2(z)$$
$$=\frac{(n^2-1)(n^2+11)}{45}.$$

 For the formula (\ref{formula3}), let $$F_3(z)=\frac{ (z^m-1)^2}{z^m(z-1)^2 (z^n-1) }=\frac{(1+z+\cdots +z^{m-1})^2}{z^m (z-1) (1+z+\cdots+z^{n-1})}.$$
$F_3$ is holomorphic on $\C$ except at $z=0$ and $z=z_k~(0 \leq k \leq n-1)$.
Similarly to the previous computations, we have
$$\sum_{k=1}^{n-1} \frac{\sin^2 \left(m  \frac{ \pi k  }{n}  \right)}{ \sin^2 \left( \frac{ \pi k}{n} \right)}=\sum_{k=1}^{n-1} \left( \frac{z_k^m-1}{z_k-1} \right)^2 z_k^{1-m}=-n \left(\Res_{z=1} F_3(z)+ \Res_{z=0} F_3(z)\right).$$
Since $z=1$ is a simple pole,
$$\Res_{z=1} F_3(z)=\frac{(1+z+\cdots +z^{m-1})^2}{z^m (1+z+\cdots+z^{n-1})} \Big|_{z=1}=\frac{m^2}{n}.$$
Also note that 
$$F_3(z)=-z^{-m} (z^{2m}-2z^m+1) \left( \sum_{k=0}^{\infty} z^k \right)^2 \left(\sum_{k=0}^{\infty}z^{kn} \right).$$
Since $1 \leq m \leq n-1$, 
$$\Res_{z=0} F_3(z)=-\Res_{z=0} \frac{1}{(z-1)^2 z^m}=-m.$$
Thus, we finish the proof of (\ref{formula3}).

 Finally, for (\ref{formula4}), let
$$F_4(z)=\frac{(z^m-1)^2}{(z-1)^4 (z^n-1)} \cdot z^{1-m}.$$
Similar to the proof of the formula (\ref{formula3}), we obtain that
$$\sum_{k=1}^{n-1} \frac{\sin^2 \left(m  \frac{ \pi k  }{n}  \right)}{ \sin^4 \left( \frac{ \pi k}{n} \right)}=-4 \sum_{k=1}^{n-1}  \frac{(z_k^m-1)^2}{(z_k-1)^4}  z_k^{2-m}=4n \left(\Res_{z=1} F_4(z)+ \Res_{z=0} F_4(z)\right)$$
$$=\frac{m^2(n-m)^2}{3}+\frac{2}{3} m (n-m).$$

\end{proof}

\newpage

\end{document}